\let\oldtocsection=\tocsection
\let\oldtocsubsection=\tocsubsection
\let\oldtocsubsubsection=\tocsubsubsection
\renewcommand{\tocsection}[2]{\hspace{0em}\oldtocsection{#1}{#2}}
\renewcommand{\tocsubsection}[2]{\hspace{2em}\oldtocsubsection{#1}{#2}}
\renewcommand{\tocsubsubsection}[2]{\hspace{4em}\oldtocsubsubsection{#1}{#2}}
\newtheorem{theorem}{Theorem}[section]
\newtheorem{lemma}[theorem]{Lemma}
\newtheorem{corollary}[theorem]{Corollary}
\newtheorem{remark}[theorem]{Remark}
\newtheorem{definition}[theorem]{Definition}
\def\R{{\mathbb R}}
\def\N{{\mathbb N}}
\def\cC{{\mathcal C}}
\def\cL{{\mathcal L}}
\def\a{\alpha}
\def\b{\beta}
\def\e{\varepsilon}
\def\eps{\varepsilon}
\def\d{\delta}
\def\D{\Delta}
\def\l{\lambda}
\def\L{\Lambda}
\def\m{\mu}
\def\p{\partial}
\def\r{\rho}
\def\t{\tau}
\def\w{\omega}
\def\W{\Omega}
\def\1{\left(}
\def\2{\right)}
\def\3{\left\{}
\def\4{\right\}}
\def\8{\infty}
\def\sm{\setminus}
\def\ss{\subseteq}
\DeclareMathOperator*{\diam}{diam}
\DeclareMathOperator*{\osc}{osc}
\DeclareMathOperator*{\dist}{dist}
\def\sideremark#1{\ifvmode\leavevmode\fi\vadjust{\vbox to0pt{\vss
 \hbox to 0pt{\hskip\hsize\hskip1em
 \vbox{\hsize2.1cm\tiny\raggedright\pretolerance10000
  \noindent #1\hfill}\hss}\vbox to15pt{\vfil}\vss}}}%
\def\namedlabel#1#2{\begingroup
    #2%
    \def\@currentlabel{#2}%
    \phantomsection\label{#1}\endgroup
}
\title{Classical solutions to integral equations with zero order kernels}
\author[H. A. Chang-Lara]{H\'ector A. Chang-Lara}
\thanks{H. A. Chang-Lara. Centro de Investigación en Matemáticas. Guanajuato, México.}
\address{
H\'ector A. Chang-Lara,\newline
Matemáticas Básicas,\newline
Centro de Investigación en Matemáticas,\newline
Jalisco S/N, Col. Valenciana,\newline
C.P. 36000, Guanajuato, Mexico}
\email{hector.chang@cimat.mx}
\author[A. Saldaña]{Alberto Saldaña}
\thanks{A. Saldaña. Instituto de Matem\'aticas, Universidad Nacional Aut\'onoma de M\'exico. Ciudad de México, México.}
\address{
Alberto Saldaña,\newline
Instituto de Matem\'aticas,\newline 
Universidad Nacional Aut\'onoma de M\'exico,\newline 
Circuito Exterior, Ciudad Universitaria,\newline
C.P. 04510, Mexico City, Mexico.}
\email{alberto.saldana@im.unam.mx}
\subjclass{35A01, 35A09, 35B45, 35B50, 35B51, 35B65, 35R09, 47G20}
\keywords{Logarithmic Laplacian, logarithmic Schrödinger equation, diminish of oscillation, log-Hölder spaces}
\begin{document}

\begin{abstract}
We show global and interior higher-order log-Hölder regularity estimates for solutions of Dirichlet integral equations where the operator has a nonintegrable kernel with a singularity at the origin that is weaker than that of any fractional Laplacian. As a consequence, under mild regularity assumptions on the right hand side, we show the existence of classical solutions of
Dirichlet problems involving the logarithmic Laplacian and the logarithmic Schrödinger operator.
\end{abstract}

\maketitle


\section{Introduction}\label{sec:intro}

In this paper we consider integrodifferential operators of the form
\begin{align}\label{Lk}
L_K u(x):=\int_{B_1(x)}\frac{u(x)-u(y)}{|x-y|^{N}}K(x,y-x)\, dy,
\end{align}
where $N\geq 1$ is the dimension and $K$ is a nonnegative bounded function and bounded away from zero.  Here $K$ represents the presence of some heterogeneity and anisotropy in the (anomalous) diffusion. Note that $L_K$ has a finite range of interaction (because the integral is over the set $B_1(x)$); however, as can be seen in the two examples we present below, the regularity theory for $L_K$ can be applied to more general zero order singular integral operators with an infinite range of interaction. Observe that the kernel $|x-y|^{-N}$ is the borderline case for hypersingular integrals and hence we call it a \emph{zero order kernel}.

Operators of the type \eqref{Lk} are related to geometric stable Lévy processes, we refer to \cite{bass94,beghin14,feulefack2021logarithmic,FKT20,MR3626549,kozubowski99,vsikic06} and the references therein for an overview of the different applications (in engineering, finances, physics, etc) and of the theoretical relevance of these operators in probability and in potential theory.

In this paper we study the regularity of solutions to Dirichlet equations involving $L_K$.  Since the singularity of the kernel in $L_K$ is weaker than that of any fractional Laplacian, the corresponding solutions  might not belong to any Hölder space.  As a consequence, the regularity of solutions has to be analyzed in generalized Hölder spaces with logarithmic weights. 

Before we present our general results, we focus first on a couple of paradigmatic particular cases, which are also the main motivation for this research.

\subsection{The logarithmic Laplacian and the logarithmic Schrödinger equation}

The \emph{logarithmic Laplacian}, denoted by $L_\Delta$, is a pseudodifferential operator with Fourier symbol $2\ln|\xi|$.  It appears naturally as a first order expansion of the fractional Laplacian $(-\Delta)^s$ as $s\to 0^+$; in particular, in \cite[Theorem 1.1]{MR3995092} it is shown that, for all $\varphi\in C^\infty_c(\R^N)$,
\begin{align*}
(-\Delta)^s\varphi = \varphi + sL_\Delta \varphi + o(s)\qquad \text{as $s\to 0^+$ in }L^p(\R^N) \text{ with }1<p\leq \infty,
\end{align*}
 where $(-\Delta)^s$ denotes the pseudodifferential operator with Fourier symbol $|\xi|^{2s}$.  The operator $L_\Delta$ can be written as the singular integral
 \begin{align}\label{LL}
     L_\Delta u(x)=c_N\int_{\R^N}\frac{u(x)\chi_{B_1(x)}(y)-u(y)}{|x-y|^N}\, dy+\rho_N u(x),
 \end{align}
 where $c_N$ and $\rho_N$ are explicit constants given in \eqref{constants} below. In \cite[Proposition 1.3]{MR3995092} it is proved that \eqref{LL} is well defined under rather mild regularity assumptions on $u$, for instance, if $u$ has compact support and it is Dini continuous at $x$.
 
The study of Dirichlet problems involving $L_\Delta$ is a key element in the study of fractional equations as the order $s$ goes to zero. This asymptotic analysis is relevant, both in applications and for theoretical reasons.  In applications, for instance, several phenomena that are modeled with a fractional-type diffusion are optimized in some sense for small values of $s$.  On the other hand,  the mathematical structures that appear in the limit as $s\to 0^+$ for linear and nonlinear fractional problems are highly nontrivial and interesting. For brevity, here we simply refer to \cite{Caffarelli17,MR3995092,hernandez2022small,MR4141492,MR4279386} for an overview of these results and for further references. 
 
 The research of boundary value problems in an open bounded set $\Omega\subset \R^N$ such as 
 \begin{align}\label{ll:1}
    L_\Delta u =f \text{ in }\Omega,\qquad
    u=0\text{ in }\R^N\backslash \Omega
 \end{align}
 (for some adequate $f$), is very recent.   In \cite{MR3995092} it is shown that \eqref{ll:1} has a variational structure and weak solutions can be found with standard arguments whenever $\Omega$ has a small measure. This restriction on $\Omega$ guarantees that the operator $L_\Delta$ has a maximum principle; in particular, in some large open sets it can happen that \eqref{ll:1} does not have a solution. Furthermore, whenever there is a weak solution, it is not trivial to find conditions on $f$ to guarantee that $u$ is a \emph{classical solution}, namely, that $L_\Delta u$ can be computed pointwisely in $\Omega$.  The only previous higher-order regularity result that we are aware of is \cite{feulefack2021nonlocal}, which shows (among other results) that a weak solution $u$ belongs to $C^\infty$ if $f\in C^\infty$.   The main difficulty is that the kernel involved in \eqref{LL} is the borderline case for hypersingular integrals and thus its regularizing properties are weak and nontrivial to characterize; in particular, solutions might not belong (even locally) to the Hölder space $C^\a$ for any $\a\in(0,1)$. In other words, the question that we aim to answer here is the following:
  \begin{center}
   \emph{Under which conditions on $f$ do we have a classical solution $u$ of \eqref{ll:1} and what is the improvement in the regularity of $u$  with respect to the smoothness of $f$?}
  \end{center}
  
 To answer this question, we need to consider \emph{generalized Hölder spaces} with a (truncated logarithmic) weight
 \begin{align*}
 \ell(\r) := \frac{1}{|\ln(\min\{\r,\frac{1}{10}\})|}.     
 \end{align*}
 This function is concave in $(0,\infty)$ and behaves like $|\ln \rho|^{-1}$ as $\rho\to 0^+$, see Figure \ref{f}.

\setlength{\unitlength}{1cm}
\begin{figure}[ht] 
\begin{center}
\begin{picture}(5,3.5)
\put(0,0){\includegraphics[width=5cm]{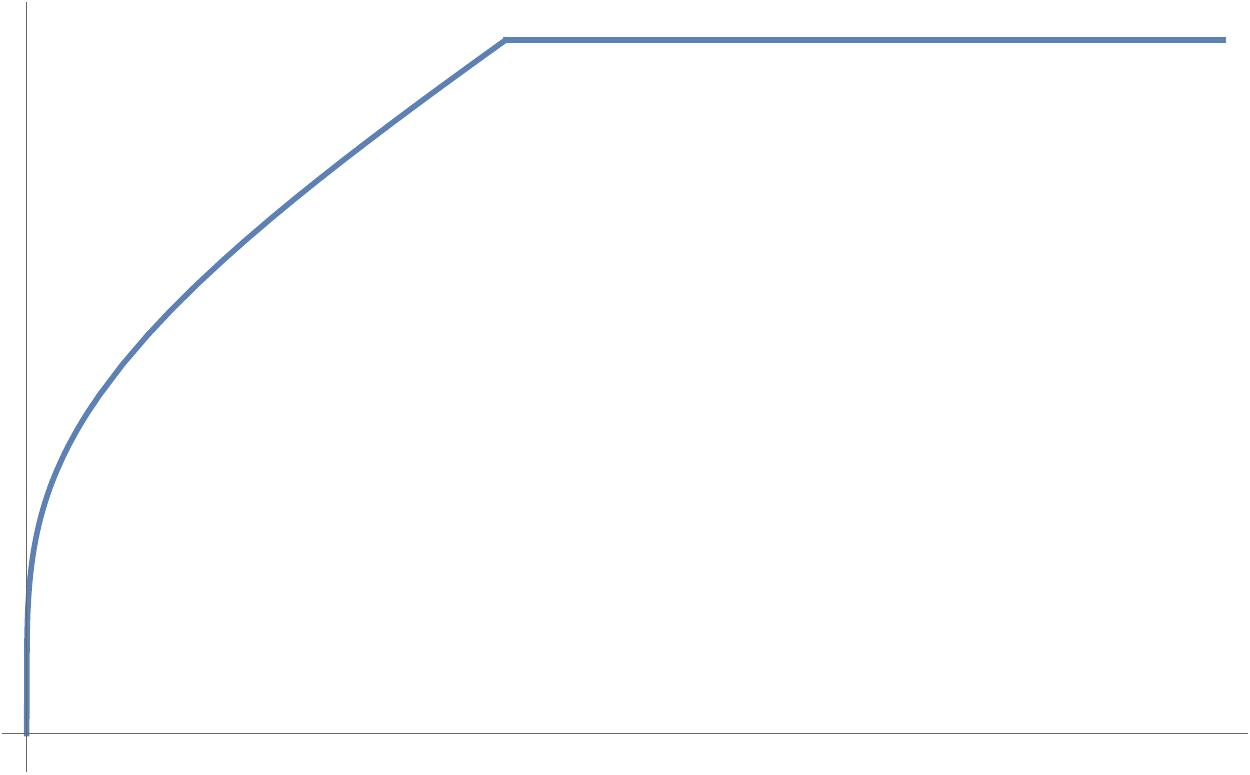}}
\put(-0.6,3){$\ell(r)$}
\put(5.2,0){$r$}
\put(1.8,0.6){$\frac{1}{10}$}
\put(1.95,0.1){$|$}
\end{picture}
\end{center}
 \caption{The function $\ell$.}\label{f}
\end{figure}

 Next we introduce some notation. Given $\alpha\in(0,1)$ and $N\geq 1$, let
 \begin{align*}
 \mathcal X^\alpha(\Omega) &:= \{u:\R^N\to\R\ | \ \|u\|_{\mathcal X^\alpha(\Omega)}<\8 \text{ and }u=0\text{ in }\R^N\backslash \W\},\\ 
 \mathcal Y(\Omega) &:= \{f:\W\to\R \ | \ \|f\|_{\mathcal Y(\Omega)}<\8\},
 \end{align*}
 where
 \begin{align*}
\|u\|_{\mathcal X^\alpha(\Omega)} &:=
\sup_{\substack{x,y\in \R^N\\x\neq y}}\frac{|u(x)-u(y)|}{\ell^\a(|x-y|)}
+\sup_{\substack{x,y\in \W\\x\neq y}}\ell^{1+\alpha}(d(x,y))\frac{|u(x)-u(y)|}{\ell^{1+\a}(|x-y|)},\\
\|f\|_{\mathcal Y(\Omega)} &:= \|f\|_{L^\infty(\W)}
+\sup_{\substack{x,y\in \W\\x\neq y}}\ell^{2}(d(x,y))\frac{|f(x)-f(y)|}{\ell(|x-y|)},\\
d(x)&:=\dist(x,\partial \Omega),\qquad d(x,y):=\min(d(x),d(y)).
\end{align*}
The second term in the norm $\|\cdot\|_{\mathcal X^\alpha}$ accounts for higher-order regularity in the interior of $\Omega$ (which might deteriorate close to the boundary $\partial \Omega$); this regularity is enough to compute pointwisely the operator $L_\Delta u$ in the interior of $\Omega$. Moreover, the term
\begin{align}\label{es:int}
\sup_{\substack{x,y\in \R^N\\x\neq y}}\frac{|u(x)-u(y)|}{\ell^\a(|x-y|)}
\end{align}
in the norm $\|\cdot\|_{\mathcal X^\alpha}$ relates to global lower-order regularity in $\R^N$, which in particular describes the behavior of $u$ across the boundary $\partial \Omega$.  We emphasize that, since $\alpha\in(0,1)$, having only that \eqref{es:int} is finite is not enough to compute $L_\Delta u$ pointwisely, and this is why we call it lower order regularity.  A discussion of these norms and the associated Banach spaces can be found in Section \ref{sec2.1}.

Our main regularity contribution to \eqref{ll:1} is the following Fredholm-Alternative-type result. 
\begin{theorem}\label{loglap:cor}
Let $\W\ss\R^N$ be an open bounded Lipschitz set satisfying a uniform exterior ball condition.  Then there is $\alpha=\alpha(N,\W)\in(0,1)$ such that exactly \underline{one} of the following alternatives holds:
\begin{enumerate}
    \item For every $f \in \mathcal Y(\Omega)$ there exists a unique \underline{classical} solution $u\in \mathcal X^\alpha(\Omega)$ of 
    \begin{align*}
L_\Delta u=f \text{ in } \W,\qquad
u=0 \text{ on } \R^N\sm\W.
\end{align*}
Moreover, $\|u\|_{\mathcal X^\alpha(\Omega)} \leq C\|f\|_{\mathcal Y(\Omega)}$ for some constant $C=C(N,\Omega)>0$.
\medskip
\item There is a non-trivial classical solution $u\in \mathcal X^\alpha(\Omega)$ of $L_\Delta u=0$.
\end{enumerate}
\end{theorem}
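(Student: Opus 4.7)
The plan is to deduce Theorem~\ref{loglap:cor} from the Riesz--Schauder Fredholm alternative, using as the main input the Schauder-type a priori estimates for the general operator $L_K$ that form the technical core of the paper. Specifically, I would need two ingredients: (i) a quantitative regularity estimate
\[\|u\|_{\mathcal{X}^\alpha(\Omega)} \leq C(N,\Omega)\,\bigl(\|u\|_{L^\infty(\R^N)} + \|f\|_{\mathcal{Y}(\Omega)}\bigr)\]
for every classical solution of $L_\Delta u = f$ with $u = 0$ outside $\Omega$; and (ii) unique solvability of the perturbed problem $(L_\Delta + \lambda I) u = g$ in $\mathcal{X}^\alpha(\Omega)$ for some (equivalently, all large) $\lambda > 0$. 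The estimate (i) should specialize the general $L_K$ theory to $K \equiv c_N$ (with the constant zero-order term $\rho_N u$ from \eqref{LL} absorbed into $f$); the uniform exterior ball condition enters precisely when one closes up the global $\ell^\alpha$ modulus across $\partial \Omega$, while the interior $\ell^{1+\alpha}$ modulus comes from the diminish-of-oscillation iteration at zero order.

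Granted (i) and (ii), let $T_\lambda : \mathcal{Y}(\Omega) \to \mathcal{X}^\alpha(\Omega)$ denote the solution operator of $(L_\Delta + \lambda I)u = g$. The equivalence $L_\Delta u = f \Longleftrightarrow u = \lambda T_\lambda u + T_\lambda f$ recasts the Dirichlet problem as $(I - \lambda T_\lambda)u = T_\lambda f$ on $\mathcal{Y}(\Omega)$, where $\lambda T_\lambda$ is read as an endomorphism of $\mathcal{Y}(\Omega)$ through the restriction map $\mathcal{X}^\alpha(\Omega) \to \mathcal{Y}(\Omega)$. The compactness of this endomorphism is an Arzel\`a--Ascoli-type statement: a $\mathcal{X}^\alpha$-bounded sequence enjoys a uniform global modulus $\ell^\alpha$ and a uniform interior modulus $\ell^{1+\alpha}$, both strictly stronger than the modulus $\ell$ controlling the $\mathcal{Y}$-seminorm, which yields precompactness in $\mathcal{Y}$. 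The Riesz--Schauder theorem then gives the dichotomy directly: either $I - \lambda T_\lambda$ is invertible on $\mathcal{Y}(\Omega)$, in which case $u = (I-\lambda T_\lambda)^{-1}T_\lambda f$ is the unique solution and bootstrapping via $u = T_\lambda(f + \lambda u)$ plus (i) yields the norm estimate $\|u\|_{\mathcal{X}^\alpha} \leq C\|f\|_{\mathcal{Y}}$; or $\ker(I - \lambda T_\lambda)$ is nonzero and every nonzero element is, again by (i), a classical $\mathcal{X}^\alpha$-solution of $L_\Delta u = 0$.

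The main obstacle, and presumably the bulk of the paper's work, is ingredient (ii). Uniqueness is easy: for $\lambda$ sufficiently large, $L_\Delta + \lambda I$ admits a strong maximum principle, since $\rho_N$ contributes only a fixed constant that is dominated by $\lambda$. Existence of \emph{classical} solutions in $\mathcal{X}^\alpha$ is more delicate. One natural route is to use the variational framework of \cite{MR3995092} --- which is coercive on any bounded $\Omega$ once $\lambda$ is large --- to produce a weak solution, and then bootstrap it to $\mathcal{X}^\alpha$ by applying (i) along a smooth approximation of the data. An alternative is to approximate by the fractional problem $((-\Delta)^s + \lambda)u_s = g$ and pass to the limit $s \to 0^+$ using the uniform-in-$s$ compactness that a careful version of (i) should provide. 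In either case, the crux is the construction of log-Hölder boundary barriers compatible with the exterior ball condition: standard power-type barriers, which give Hölder regularity for fractional operators of positive order, are unavailable at order zero and must be replaced by barriers whose modulus is only $\ell^\alpha$, so that the global part of the $\mathcal{X}^\alpha$ norm really can be closed up to $\partial \Omega$.
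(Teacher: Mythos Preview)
Your approach is correct and in the same Fredholm--Schauder spirit as the paper's, but the paper organizes the reduction differently and thereby sidesteps your ingredient (ii). Rather than taking $L_\Delta+\lambda I$ (for large $\lambda$) as the invertible anchor and $-\lambda I$ as the compact perturbation, the paper splits
\[
L_\Delta \;=\; c_N L_K \;+\; T,\qquad K\equiv 1,\qquad Tu:=-c_N\,J\ast u+\rho_N u,
\]
where $L_K$ is the finite--range operator for which the whole regularity machinery (interior $\cL^{1+\a}$ estimates, boundary barriers, and the maximum principle) has been developed. The point is that $L_K$ already enjoys a maximum principle with no lower--order help, so $L_K^{-1}:\mathcal Y(\Omega)\to\mathcal X^\alpha(\Omega)$ is bounded outright (Theorem~\ref{thm:loglap}); the Fredholm alternative is then applied to $I+L_K^{-1}T$ once one checks that $T:\mathcal X^\alpha(\Omega)\to\mathcal Y(\Omega)$ is compact (Lemma~\ref{T:comp}). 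This is precisely the compact embedding $\mathcal X^\alpha\hookrightarrow\mathcal Y$ you invoke, combined with the fact that the long--range convolution $J\ast u$ is controlled in $\mathcal Y$ by $\|u\|_{L^\infty}$ alone.

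Two remarks on your write--up. First, when you say ``absorb $\rho_N u$ into $f$'' you should also account for the long--range piece $-c_N J\ast u$; this is harmless (it is lower order in exactly the sense above) but it is the substance of the compactness check. Second, your ingredient (ii) is not actually easier than the paper's existence step: producing a classical $\mathcal X^\alpha$ solution of $(L_\Delta+\lambda I)u=g$ still requires the same approximation argument (smooth kernels and data, existence from \cite{feulefack2021nonlocal}, uniform a priori bounds, passage to the limit) that the paper carries out for $L_K$ in Theorem~\ref{thm:loglap}. The paper's splitting buys a cleaner statement (Theorem~\ref{fredholm}) that applies uniformly to $L_\Delta$, $(I-\Delta)^{\log}$, and any other compact perturbation of a finite--range $L_K$, without having to revisit existence for each.
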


Observe that the solution $u$ has an improved interior regularity (with respect to that of $f$) which allows the pointwise evaluation of $L_\Delta u$ in $\Omega$ (see Theorem \ref{prop3} and Lemma \ref{T:comp}).

The second alternative in Theorem \ref{loglap:cor} can be discarded whenever $L_\Delta$ satisfies a maximum principle, which is equivalent to having a strictly positive principal eigenvalue. This is the case under the geometric hypotheses given in \cite[Corollary 1.9]{MR3995092}.  On the other hand,  the principal eigenvalue of $L_\Delta$ (denoted $\lambda_1^L$) is bounded from above by the logarithm of the principal eigenvalue of the Dirichlet Laplacian on $\W$ (see \cite[Corollary 1.10]{MR3995092}); in particular, this shows that $\lambda_1^L$ can be zero in some domains and therefore the second alternative in Theorem~\ref{loglap:cor} can happen.

Theorem \ref{loglap:cor} follows from the more general Theorem \ref{fredholm} below. In particular, an analogous result holds for $L_\Delta+T$, where $T$ is a compact perturbation; for example, since the inclusion from $\mathcal X^\alpha(\Omega)$ into $\mathcal Y(\Omega)$ is compact (see Lemma \ref{cpt:local}), $T$ can be a multiple of the identity.

Theorem \ref{loglap:cor} does not aim at finding the optimal values for $\alpha$, it merely guarantees the existence of classical solutions in the natural setting of log-Hölder spaces.  However, the Green's functions estimates given in \cite{kim2014green} suggest that the optimal $\alpha$ for global regularity (in $\R^N$) is $\alpha=\frac{1}{2}$ whenever $f$ is smooth (see also \cite[Theorem 1.11]{MR3995092}). On the other hand, we conjecture that a bound for
\[
\sup_{\substack{x,y\in \W\\x\neq y}}\ell^{1+\alpha}(d(x,y))\frac{|u(x)-u(y)|}{\ell^{1+\alpha}(|x-y|)},
\]
can be given in terms of
\begin{align*}
\|f\|_{L^\infty(\W)}+\sup_{\substack{x,y\in \W\\x\neq y}}\ell^{1+\alpha}(d(x,y))\frac{|f(x)-f(y)|}{\ell^\alpha(|x-y|)}.
\end{align*}
One of the main obstacles to show this conjecture is the lack of good scaling properties in logarithmic operators (see Section \ref{sec:1}).

Our general results also cover other logarithmic problems of importance, such as the \emph{logarithmic Schrödinger equation}. The logarithmic Schrödinger operator (see \cite{bass94,beghin14,feulefack2021logarithmic,MR3626549,kozubowski99,vsikic06} and the references therein) is given by
\begin{align}\label{Schlog}
(I-\D)^{\log} u(x) := \int_{\R^N} \frac{u(x)-u(y)}{|y-x|^N}\w(|y-x|)dy,
\end{align}
where $\w(r) = c_Nr^{N/2}K_{N/2}(r)$ and $K_{N/2}$ is a modified Bessel function of second kind and index $N/2$.  In particular, $\w(r) = c_N+ b_Nr + o(r)$ as $r\to 0^+$.  We show the following.

\begin{theorem}\label{Sch:log}
Let $\W\ss\R^N$ be an open bounded Lipschitz set satisfying a uniform exterior ball condition. There is $\alpha=\alpha(N,\W)\in(0,1)$ such that, for every $f\in \mathcal Y(\Omega)$, there exists a unique classical solution $u\in \mathcal X^\alpha(\W)$ of
\begin{align*}
(I-\D)^{log} u = f \text{ in } \W,\qquad
u=0 \text{ in } \R^N\sm\W.
\end{align*}
Moreover, $\|u\|_{\mathcal X^\alpha(\W)}\leq C\|f\|_{\mathcal Y(\W)}$ for some constant $C=C(N,\W)>0$.
\end{theorem}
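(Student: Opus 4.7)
My plan is to reduce $(I-\D)^{\log}$ to an operator of the form $L_K+P$ with $P$ a compact perturbation, apply the general Fredholm-type Theorem \ref{fredholm} (in the extended form allowing compact perturbations that is mentioned after Theorem \ref{loglap:cor}), and finally discard the second alternative via a maximum principle built on the strict positivity of the kernel $\w(|y-x|)/|y-x|^N$. Splitting the defining integral at radius one yields
\[
(I-\D)^{\log}u(x) = \int_{B_1(x)}\frac{u(x)-u(y)}{|y-x|^N}\w(|y-x|)\,dy + \int_{\R^N\sm B_1(x)}\frac{u(x)-u(y)}{|y-x|^N}\w(|y-x|)\,dy.
\]
The first integral equals $L_K u(x)$ with the translation-invariant kernel $K(x,z):=\w(|z|)$; the expansion $\w(r)=c_N+b_Nr+o(r)$ as $r\to 0^+$ together with the continuity and strict positivity of $\w$ on $(0,1]$ shows that $K$ is bounded and bounded away from zero on $B_1$, so the general regularity theory of the paper applies.

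For the outer integral, since $u\equiv 0$ on $\R^N\sm\W$, one has
\[
\int_{\R^N\sm B_1(x)}\frac{u(x)-u(y)}{|y-x|^N}\w(|y-x|)\,dy = A_\w u(x) - Tu(x),
\]
where $A_\w:=\int_{\R^N\sm B_1(0)}\w(|z|)|z|^{-N}\,dz<\8$ (finite by the exponential decay of $K_{N/2}$) and $Tu(x):=\int_{\W\sm B_1(x)}\w(|y-x|)|y-x|^{-N}u(y)\,dy$. The kernel defining $T$ is smooth and bounded uniformly for $|y-x|\geq 1$, so $T$ sends $L^\8(\W)$ into $C^\infty(\W)$ with uniform control; in particular $T:\mathcal X^\a(\W)\to\mathcal Y(\W)$ is compact. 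Combined with the compact embedding $\mathcal X^\a(\W)\hookrightarrow\mathcal Y(\W)$ of Lemma \ref{cpt:local} (which handles the identity piece $A_\w I$), the full perturbation $P:=A_\w I-T$ is compact from $\mathcal X^\a(\W)$ to $\mathcal Y(\W)$. Applying the compact-perturbation extension of Theorem \ref{fredholm} to $L_K+P=(I-\D)^{\log}$ therefore yields either the desired existence, uniqueness and estimate, or a non-trivial $v\in\mathcal X^\a(\W)$ with $(I-\D)^{\log}v=0$ in $\W$ and $v=0$ on $\R^N\sm\W$.

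It remains to discard this second alternative. Replacing $v$ by $-v$ if necessary, $v$ attains a strictly positive maximum at some $x_0\in\W$. Since $v(x_0)-v(y)\geq 0$ everywhere and $v(x_0)-v(y)=v(x_0)>0$ on the set $\R^N\sm\overline{\W}$ of positive measure, while $\w(|y-x_0|)/|y-x_0|^N>0$ pointwise, the pointwise evaluation furnished by Lemma \ref{T:comp} gives $(I-\D)^{\log}v(x_0)>0$, contradicting the homogeneous equation. The main obstacle is verifying that the paper's compact-perturbation version of Theorem \ref{fredholm} genuinely accommodates the precise combination $L_K+A_\w I-T$ with the right mapping properties between $\mathcal X^\a(\W)$ and $\mathcal Y(\W)$; once this is granted, the compactness of $T$ (direct estimates on a smooth bounded kernel) and of $A_\w I$ (via Lemma \ref{cpt:local}) plug in cleanly, and the strict-positivity argument above becomes a routine application of Lemma \ref{T:comp} using that $v\in\mathcal X^\a(\W)$ is pointwise-evaluable.
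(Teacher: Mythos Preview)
Your approach is correct and follows the same overall strategy as the paper: write $(I-\D)^{\log}=L_K+(\text{compact})$, invoke Theorem~\ref{fredholm}, and rule out the homogeneous alternative. Two differences are worth noting. First, the paper takes $K\equiv 1$ and pushes \emph{all} of $\w-c_N$ into the compact remainder (arguing as in Lemma~\ref{T:comp}), which sidesteps having to verify the uniform ellipticity and gradient bound \eqref{1reg} for $K(y)=\w(|y|)$; you instead keep $K=\w|_{B_1}$, which is fine but requires checking $|\w'(r)|\lesssim r^{-1}$ near $0$ from Bessel asymptotics---a step you omit. Second, to discard the second alternative the paper simply cites \cite[Section 2]{feulefack2021logarithmic}, whereas your direct strong-maximum-principle argument at $x_0$ is self-contained and perfectly valid.

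A few local slips: your ``main obstacle'' is not one---Theorem~\ref{fredholm} is already stated for $L_K+T$ with arbitrary compact $T:\mathcal X^\alpha(\W)\to\mathcal Y(\W)$, so nothing further is needed once you know $P=A_\w I-T$ is compact. Also, the two appeals to ``Lemma~\ref{T:comp}'' for pointwise evaluability are misplaced; that lemma concerns compactness of a perturbation. What you need is that $u\in\mathcal X^\alpha(\W)\subset \mathcal L^{1+\alpha}_{loc}(\W)\subset C^{Dini}_{loc}(\W)$, so $(I-\D)^{\log}u$ is classically defined (cf.\ Definition~\ref{def} and Theorem~\ref{prop3}).
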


Similarly as before, one can also consider the operator $(I-\D)^{log}$ plus a compact perturbation and a Fredholm-Alternative-type result follows from Theorem \ref{fredholm}.

\subsection{General framework}

Next, we present our general setting to handle a wide variety of logarithmic-type integrodifferential equations. Let $\W\ss\R^N$ be a bounded open set and let $B_r$ denote the open ball of radius $r>0$ centered at 0.  For $K \in L^\8(\W\times B_1)$ and for a suitable $u$, let 
\begin{align*}
L_Ku(x) := \int_{B_1(x)} \frac{u(x)-u(y)}{|y-x|^N}K(x,y-x)\, dy.    
\end{align*}

Observe that this operator $L_K$ is nonlocal but it has a \emph{finite range of interaction}, since the integral is over the bounded set $B_1(x)$.  This is one way of dealing with the fact that the kernel $|y|^{-N}$ is not integrable at $\infty$.  Note, for instance, that the integral 
\begin{align*}
\int_{\R^N} \frac{u(x)-u(y)}{|y-x|^N}\, dy
\end{align*}
may not be finite for any $x\in \R^N$ even if $u \in C^\8_c(\R^N)$.   Another (equivalent) approach to solve this integrability issue (used for example in \cite{MR3626549}) would be to impose some decay on the mapping $y\mapsto K(x,y)$ as $|y|\to \infty$, but we believe that the use of a finite range of interaction is simpler and more natural for our applications. 

We aim at finding weak assumptions on $K$ to guarantee the existence of classical solutions of the associated Dirichlet problem in the natural setting of log-Hölder spaces. We say that $L_K$ is \emph{uniformly elliptic with respect to $0<\l\leq \L$} if
\[
\l\leq K(x,y)\leq \L \text{ for all } (x,y)\in\W\times B_1.
\]

Uniform ellipticity endows the operator $L_K$ with a positivity preserving property (see Lemma~\ref{lem:mp}) and it is the key ingredient to obtain a first (lower-order) regularity estimate (see Theorem \ref{thm1}) which, under Dirichlet boundary conditions, can be extended to a global regularity estimate (see Theorem \ref{thm:bdry}). Here, by Dirichlet boundary conditions we mean that
\begin{align*}
u=0 \text{ in $\R^N\backslash \Omega$}.
\end{align*}
Of course, due to the finite range of interaction, it would suffice to impose this condition on $B_1(\Omega)\backslash \Omega$ (with $B_1(\Omega):=\{x\in \R^N\::\:\dist(x,\Omega)<1\}$), but to simplify notation we always consider the trivial extension of $u$ to $\R^N$.

Moreover, to obtain the existence of classical solutions, namely, functions that allow the pointwise evaluation of the operator, we require higher-order regularity estimates (see Theorem \ref{thm2}). For this, we need to impose further assumptions on $K$.  We say that $L_K$ is \emph{translation invariant} if
\[
K(x,y) = K(y) \text{ for all } (x,y)\in\Omega\times B_1
\]

The following is our main existence result.  Let $DK$ denote the gradient of $K$.
\begin{theorem}[Fredholm Alternative]\label{fredholm}
Let $\W\ss\R^N$ be a bounded Lipschitz open set with a uniform exterior ball condition, $L_K$ be a translation invariant uniformly elliptic operator  with respect to $0<\l\leq \L$, and assume that $K$ is differentiable with
\begin{align}\label{1reg}
|D K(y)|\leq \L|y|^{-1} \text{ for every $y\in\R^N\sm\{0\}$}.
\end{align}
Then, there is $\alpha=\alpha(N,\l,\L,\Omega)>0$ such that, for every $T: \mathcal X^\alpha(\Omega) \to \mathcal Y(\Omega)$ compact linear operator, exactly \underline{one} of the following alternatives holds:
\begin{enumerate}
    \item For every $f \in \mathcal Y(\Omega)$ there exists a unique classical solution $u\in\mathcal X^\alpha(\Omega)$ of
    \begin{align}\label{fa}
(L_K+T)u=f \text{ in } \W,\qquad
u=0 \text{ on } \R^N\sm\W;
\end{align}
moreover, $\|u\|_{\mathcal X^\alpha(\Omega)} \leq C\|f\|_{\mathcal Y(\Omega)}$ for some $C=C(N,\l,\L,\W)>0$.
    \item There is a non-trivial classical solution $u\in \mathcal X^\alpha(\Omega)$ of $(L_K+T)u=0$.
\end{enumerate}
In particular, the first alternative always holds for $T\equiv 0$.
\end{theorem}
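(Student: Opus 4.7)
The plan is to reduce the problem to the classical Fredholm alternative on a Banach space by first showing that $L_K \colon \cX^\alpha(\W) \to \cY(\W)$ is an isomorphism (the case $T\equiv 0$), and then treating $T$ as a compact perturbation of this isomorphism.

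\textbf{Step 1: A priori estimate and injectivity.}
First I would combine the global (lower-order) boundary regularity of Theorem \ref{thm:bdry} with the interior higher-order regularity of Theorem \ref{thm2} to obtain, for any classical solution $u$ of $L_K u = f$, the bound
\begin{align*}
\|u\|_{\cX^\alpha(\W)} \leq C\bigl(\|f\|_{\cY(\W)} + \|u\|_{L^\infty(\W)}\bigr).
\end{align*}
The positivity-preserving property (Lemma \ref{lem:mp}), combined with a suitable logarithmic barrier built from the uniform exterior ball condition, then absorbs $\|u\|_{L^\infty(\W)}$ into $\|f\|_{\cY(\W)}$, yielding the clean bound $\|u\|_{\cX^\alpha(\W)} \leq C\|f\|_{\cY(\W)}$ and, simultaneously, injectivity of $L_K$ on $\cX^\alpha(\W)$.

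\textbf{Step 2: Surjectivity of $L_K$.}
For surjectivity I would use the method of continuity along the family $L_{K_t} = (1-t)L_{K_0} + tL_K$, $t\in[0,1]$, where $L_{K_0}$ is a translation-invariant model operator (e.g.\ a truncated constant-kernel operator closely related to $L_\Delta$) for which solvability has already been established through the quadratic-form framework of \cite{MR3995092}: Lax--Milgram in the natural energy space produces a weak solution, which is then upgraded to a classical solution in $\cX^\alpha(\W)$ via the regularity estimates of Step 1. Each $L_{K_t}$ inherits the same ellipticity constants $\l,\L$ and satisfies the gradient bound \eqref{1reg}, so the a priori estimate of Step 1 applies uniformly in $t$, and the method of continuity transfers surjectivity from $L_{K_0}$ to $L_K$.

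\textbf{Step 3: Fredholm alternative for $L_K+T$.}
With $L_K$ now a Banach-space isomorphism from $\cX^\alpha(\W)$ onto $\cY(\W)$, I factor
\[
L_K + T \;=\; L_K\bigl(I + L_K^{-1}T\bigr),
\]
and note that $L_K^{-1}T\colon \cX^\alpha(\W)\to\cX^\alpha(\W)$ is the composition of the compact operator $T$ with the bounded inverse $L_K^{-1}$, hence compact on $\cX^\alpha(\W)$. The classical Fredholm alternative for compact perturbations of the identity on a Banach space produces the stated dichotomy, and the a priori estimate in alternative (1) is inherited from the boundedness of $L_K^{-1}$ together with $(I+L_K^{-1}T)^{-1}$. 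The solution $u$ is classical in the sense that $L_K u(x)$ makes sense pointwise: this is guaranteed by the second seminorm in $\|\cdot\|_{\cX^\alpha(\W)}$, which, together with the kernel regularity \eqref{1reg}, yields the Dini-type modulus required for integrability of the difference quotient defining $L_K u(x)$ at every $x\in\W$.

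\textbf{Main obstacle.}
The principal difficulty lies in Step 2: establishing solvability for the base operator $L_{K_0}$ in the log-Hölder scale $\cX^\alpha$--$\cY$, and ensuring that the a priori estimate in Step 1 is genuinely uniform along the deformation path. The lack of useful scaling for zero-order kernels (highlighted by the authors in Section \ref{sec:1}) rules out standard rescaling shortcuts, so the constants appearing in Theorems \ref{thm:bdry} and \ref{thm2} must be tracked carefully to confirm they depend only on $(N,\l,\L,\W)$ and not on the particular member $L_{K_t}$ of the family.
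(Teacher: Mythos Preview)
Your Steps 1 and 3 coincide with the paper's argument: Remark \ref{rema} records exactly the a priori estimate you assemble in Step 1 (Theorems \ref{thm2} and \ref{thm:bdry} plus Lemma \ref{mp}), and the proof of Theorem \ref{fredholm} itself is precisely your Step 3 factorization $L_K+T = L_K(I+L_K^{-1}T)$ followed by the abstract Fredholm alternative.

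The genuine difference is Step 2. You propose the method of continuity along $L_{K_t}=(1-t)L_{K_0}+tL_K$, anchored at a model operator for which Lax--Milgram applies. The paper (proof of Theorem \ref{thm:loglap}) instead mollifies both $K$ and $f$ to smooth $K_i,f_i$, invokes the external existence result \cite[Theorem 1.6]{feulefack2021nonlocal} in the $C^\infty$ setting, and passes to the limit via the uniform a priori estimates and compactness (Lemmas \ref{lem:Ri}, \ref{lem:conv}, \ref{cpt}). Because the mollified $K_i$ is supported in $B_{1+1/i}$ rather than $B_1$, the paper must control a remainder $R_iu_i$ coming from the enlarged support; this is its analogue of your ``main obstacle.'' Your continuity route is sound in principle, but be careful with the anchor: \cite{MR3995092} establishes solvability for $L_\Delta = c_NL_{K_0}-c_NJ\ast\cdot+\rho_N\,\mathrm{id}$, not for the finite-range operator $L_{K_0}$ itself, and only under a smallness condition on $\W$ guaranteeing coercivity. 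To get a clean base case for $L_{K_0}$ you would still need either a direct variational argument for the truncated bilinear form (whose coercivity does follow from Lemma \ref{lem:mp} but is not in \cite{MR3995092}) or an appeal to \cite{feulefack2021nonlocal}, at which point the paper's approximation route is arguably shorter. What your approach buys is that it stays within a fixed operator class and avoids the limit passage; what the paper's buys is that it outsources the hard existence step entirely to the cited reference.
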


Theorems \ref{loglap:cor} and \ref{Sch:log} follow from Theorem \ref{fredholm} by choosing a suitable compact operator $T$ (see Lemma~\ref{T:comp}). Therefore, the Fredholm Alternative is a powerful tool to transfer the regularity results obtained for the finite-range-interaction operator $L_K$ to other more general integrodifferential operators of logarithmic type with infinite range of interaction.  

We now briefly discuss the main ideas behind Theorem \ref{fredholm} and the organization of the paper.

In Section \ref{sec:1} we begin with a  brief discussion on the lack of good scaling properties of these operators, which is one of the main obstacles to overcome.  In Section \ref{sec:2} we establish that $L_K$ is a bounded operator between appropriate log-Hölder spaces (Theorem \ref{prop3}). This result is optimal in terms of the exponents involved.  In Section \ref{sec4} we show the lower-order initial regularity estimates (Theorem \ref{thm1}). These estimates were shown first by Kassmann and Mimica in \cite{MR3626549} using a diminish of oscillation strategy inspired by probabilistic techniques \cite{kmarx}. We revisit this technique with a slight variation (we do not require an even assumption on the kernel, for instance).

Starting in Section \ref{hoe} we establish the main regularity estimates of this work. These extend the Krylov-Safonov theory to higher order interior estimates and boundary estimates respectively. In Section \ref{brwzbd} we use the method of barriers to establish boundary regularity estimates for open sets with an exterior ball condition.  Here, we show that the barriers obtained in \cite{MR3995092} can also be used for the operator $L_K$.

Finally, in Section \ref{dbvp}, we use our regularity estimates together with the very recent existence result by \cite{feulefack2021nonlocal} to settle the existence of classical solutions of the equation $L_Ku = f$ in a Lipschitz open bounded set with exterior ball condition and zero complementary data in log-Hölder spaces.

\subsection{Acknowledgments}

H. Chang-Lara  is supported by CONACyT-MEXICO grant A1-S-48577. A. Saldaña is supported by UNAM-DGAPA-PAPIIT grant IA101721
and by CONACYT grant A1-S-10457, Mexico.

\section{On the lack of a scale invariance property}\label{sec:1}

We begin with a discussion on the main obstacle to overcome when dealing with zero order integrodifferential operators, which is the lack of a scale invariance property. To explain this point let us recall how this is commonly used in the case of operators of order $s=2$ to obtain a Hölder estimate, a strategy known as the \textit{growth lemma} or \textit{diminish of oscillation lemma}, which can be traced back to the work of Landis \cite{MR1487894}.

Let $0<\l\leq \L$ and $(a_{ij}(x))$ be a function from the unit ball $B_1 \ss\R^N$ to the set of $N$ by $N$ symmetric matrices such that, for every $x \in B_1$ and $\xi\in \R^N$,
\[
\l|\xi|^2 \leq \sum_{i,j=1}^n a_{ij}(x)\xi_i\xi_j \leq \L|\xi|^2.
\]
The goal is to show that, for every $u \in C^2(B_1)$ such that $\sum_{i,j=1}^N a_{ij}\p_{ij}u = 0$, there is an $\a$-Hölder modulus of continuity at the origin of the form
\[
\sup_{r\in(0,1)}r^{-\a}\osc_{B_r} u \leq C\osc_{B_1} u \qquad (\osc_E u := \sup_E u - \inf_E u),
\]
where $\a\in(0,1)$ and $C>0$ are universal constants, meaning that they only depend on the dimension $N$ and on the ellipticity constants $\l$ and $\L$.

In order to establish these estimates for every scale $r\in(0,1)$, it actually suffices to show just one case. For instance, it can be proved that for $r=1/2$ it holds that
\[
\sum_{i,j=1}^N a_{ij}\p_{ij}u = 0 \text{ in } B_1 \quad\implies\quad \osc_{B_{1/2}}u \leq (1-\theta)\osc_{B_1} u,
\]
for some $\theta\in(0,1)$ depending only on $N$, $\l$, and $\L$, but not on the particular choice of $(a_{ij})$. Once this statement is known to be true, we can verify that the rescaled function $v(x) := u(x/2)$ satisfies a similar equation and hence
\[
\osc_{B_{1/4}}u = \osc_{B_{1/2}}v \leq (1-\theta)\osc_{B_1} v = (1-\theta)\osc_{B_{1/2}} u \leq (1-\theta)^2\osc_{B_1} u.
\]
Inductively, for $k\in\N$,
\[
\osc_{B_{2^{-k}}}u \leq (1-\theta)^k\osc_{B_1} u,
\]
from where we get that, for $\a = |\ln(1-\theta)/\ln 2|$ and $C=(1-\theta)^{-1}$,
\[
\osc_{B_{r}}u \leq Cr^\a\osc_{B_1} u.
\]

A crucial step in this approach has been that as the rescaling takes place, the new function satisfies again an elliptic equation with a uniform assumption on the coefficients (the quadratic forms get controlled from above and below in terms of the \underline{same} constants $\l$ and $\L$).

Let us see now how does the equation scales for $L_K$ with the simplest kernel $K = 1$. For $u_r(x) := u(rx)$ with $r\in(0,1)$, we obtain the following identity for any $x\in B_1$
\begin{align}\label{eq:sca}
L_Ku(rx) = \int_{B_1(rx)} \frac{u(rx)-u(y)}{|y-rx|^N}dy = \int_{B_{1/r}(x)} \frac{u_r(x)-u_r(z)}{|z-x|^N}dz =: L_{K,r}u_r(x).
\end{align}
This means that the information that we may originally know about $L_Ku$ in $B_{r}$ transfers to $L_{K,r}u_r$ in $B_1$, but notice that the kernel has spread its support to $B_{1/r}$, which, as mentioned in the Introduction, yields integrability issues as $r\to 0$.

This lack of scaling certainly prevents the possibility of having regularity estimates in the standard Hölder spaces. Going back to \eqref{eq:sca}, we could try to fix this issue by splitting the integral as
\[
L_{K,r}u_r(x) = L_Ku_r(x) + \int_{B_1(rx)\sm B_r(rx)} \frac{u(rx)-u(y)}{|y-rx|^N}dy.
\]
This means that the previous strategy now needs to incorporate right-hand side terms which will be given to the rescaled equation after each iteration.

Assuming that $L_Ku=f$ in $B_1$ and $u$ admits a modulus of continuity $\w$, we get that the right-hand side for the rescaled equation could be written as 
\[
L_{K}u_r = f_r \text{ in } B_1,
\]
where
\[
f_r(x) := f(rx) - \int_{B_1(rx)\sm B_r(rx)} \frac{u(rx)-u(y)}{|y-rx|^N}dy
\]
satisfies that
\[
\|f_r - f(r\cdot)\|_{L^\8(B_1)} \leq C\int_r^1\frac{\w(\r)}{\r}d\r.
\]

Any modulus of continuity $\w$ for which the previous integral remains bounded as $r\to 0^+$ are said to be of Dini type. Functions that admit Dini modulus of continuity are called Dini continuous and we use $C^{Dini}(\W)$ to denote the set of these functions over some open set $\W$. This is also the least regularity we need in order to evaluate our operators in the classical sense, as can be verified from the definition of $L_K$. For example, the Hölder modulus $\w(\r) = \r^\a$ for any $\a\in(0,1]$, is of Dini type. However a tighter and more natural choice is to take
\[
\w(\r) = \ell^{1+\a}(\r) = |\ln(\min(\r_0,\r))|^{-(1+\a)} \qquad (\r_0=0.1, \a>0),
\]
given that the measure in the integral $d\r/\r = d(\ln \r)$ is integrable in the interval $[0,\r_0] = [0,0.1]$ against the function $|\ln\r|^{-(1+\a)}$ for $\a>0$. Indeed we use in several computations that, for $r \in (0,\r_0)$,
\[
\int_0^r \frac{\ell^{1+\a}(\r)}{\r}d\r = -\int_0^r\frac{d|\ln \r|}{|\ln \r|^{1+\a}} = \left.\frac{|\ln \r|^{-\a}}{\a}\right|_0^r = \frac{\ell^\a(r)}{\a}.
\]
We hope that this brief motivation may help to reveal the central role of the log-Hölder moduli of continuity in our work.

\section{Preliminaries}\label{sec:2}

In the following $\Omega\subset \R^N$ is an open subset of the Euclidean space of dimension $N\geq 1$. We denote the ball with center at $x$ and radius $r$ as $B_r(x)$, and $B_r := B_r(0)$. Moreover, $\omega_N:=|\partial B_1|=\frac{N\pi^\frac{N}{2}}{\Gamma(\frac{N}{2}+1)}$ denotes the measure of the surface of the unit ball and 
 \[
 B_1(\W) := \bigcup_{x\in \W} B_1(x).
 \]

\subsection{Log-Hölder moduli of continuity}\label{sec2.1}

\begin{definition}
As defined in the introduction, we fix
\[
\ell(\r) = |\ln(\min(\r_0,\r))|^{-1}, \qquad\r_0:=0.1. 
\]
where the truncation guarantees that $\ell$ is a modulus of continuity. That is to say that $\ell$ is a non-decreasing, concave function over the interval $(0,\8)$, taking values in $(0,1]$, and with the limit $\ell(0) := \lim_{r\to0^+}\ell(r)=0$.
\end{definition}

We use that $\ell$ satisfies the following property. Its proof, among other properties for this modulus of continuity are included in the appendix.

\begin{lemma}[Semi-homogeneity]\label{prop1}
There is $c>0$ such that 
\[
\ell(\l)\leq c\frac{\ell(\l r)}{\ell(r)}\,\qquad \text{ for all }r,\l >0.
\]
\end{lemma}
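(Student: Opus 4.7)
The plan is to reformulate the inequality in terms of the reciprocal $L(\rho):=1/\ell(\rho)=|\ln\min(\rho_0,\rho)|$, which satisfies the uniform lower bound $L(\rho)\geq \ln(1/\rho_0)=\ln 10$ for every $\rho>0$. Dividing through, the claim $\ell(\lambda)\leq c\,\ell(\lambda r)/\ell(r)$ is equivalent to the submultiplicative-type estimate
\begin{equation*}
L(\lambda r)\leq c\, L(\lambda)\,L(r)\qquad\text{for all }\lambda,r>0,
\end{equation*}
and it is this inequality I would prove.

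The main case is when both $\lambda\leq \rho_0$ and $r\leq \rho_0$: then $\ln\lambda,\ln r\leq -\ln 10<0$, so
\begin{equation*}
L(\lambda r)=|\ln(\lambda r)|=|\ln\lambda|+|\ln r|=L(\lambda)+L(r),
\end{equation*}
giving
\begin{equation*}
\frac{L(\lambda r)}{L(\lambda)L(r)}=\frac{1}{L(\lambda)}+\frac{1}{L(r)}\leq \frac{2}{\ln 10}.
\end{equation*}
For the remaining cases I would use the trivial bound $L(\lambda r)\leq \ln 10$ when $\lambda r\geq \rho_0$ (making the ratio at most $1/\ln 10$), and in the mixed case (say $\lambda\leq \rho_0<r$ with $\lambda r<\rho_0$) split $L(\lambda r)=|\ln\lambda+\ln r|$ and estimate according to the sign of $\ln r$: either $\ln r\geq 0$, in which case $L(\lambda r)\leq L(\lambda)$, or $-\ln 10\leq \ln r<0$, in which case $L(\lambda r)\leq L(\lambda)+\ln 10\leq 2L(\lambda)$. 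In either subcase the ratio is bounded by $2/\ln 10$, using $L(r)\geq \ln 10$.

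Collecting the cases, the constant $c=2/\ln 10$ (equivalently $c=2/\ln(1/\rho_0)$) works. I expect no real obstacle here: the statement is purely about the one-variable function $\ell$, and the only non-routine point is noticing that the \emph{additive} identity $L(\lambda r)=L(\lambda)+L(r)$ holds exactly on the regime $\{\lambda,r\leq \rho_0\}$ and that the truncation at $\rho_0$ is precisely what converts this additive behavior into the desired submultiplicative one through the uniform lower bound $L\geq \ln 10$. The other cases are handled by the same additivity of logarithms combined with this lower bound.
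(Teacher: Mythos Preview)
Your reformulation via $L=1/\ell$ and the submultiplicative inequality $L(\lambda r)\leq cL(\lambda)L(r)$ is correct and cleaner than the paper's argument. The paper proceeds by a direct case analysis on $(r,\lambda r)$ according to a partition of the quadrant into regions (depending on whether $r$, $\lambda$, and $\lambda r$ lie above or below $\rho_0$ and $\rho_0^2$), and in each region bounds $\ell(\lambda r)/\ell(r)$ from below against $\ell(\lambda)$ separately. Your approach replaces this by the single observation that $L$ is essentially $|\ln(\cdot)|$, hence additive on products in the untruncated regime, while the truncation provides the uniform lower bound $L\geq \ln 10$ that converts additivity into submultiplicativity. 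This yields an explicit constant $c=2/\ln 10$ and avoids the region-by-region bookkeeping.

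One small gap in your case split: you cover (i) both $\lambda,r\leq\rho_0$, (ii) $\lambda r\geq\rho_0$, and (iii) the mixed case where exactly one of $\lambda,r$ exceeds $\rho_0$ with $\lambda r<\rho_0$. You omit the case where \emph{both} $\lambda,r>\rho_0$ yet $\lambda r<\rho_0$ (e.g.\ $\lambda=r=0.3$). This is harmless: then $\lambda,r\in(\rho_0,1)$, so $L(\lambda r)=|\ln\lambda|+|\ln r|<2\ln 10$ while $L(\lambda)L(r)=(\ln 10)^2$, giving the ratio below $2/\ln 10$ again. With that line added your proof is complete.
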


\begin{remark}
A reverse type inequality is not possible as it can be seen by taking $\l=r\to0^+$.
(meanwhile $\ell(\l r)/\ell(r) = 1/2$ we get that $\ell(\l)$ is arbitrarily small). 
Instead, we have the useful inequality
\[
c^{-1}\,\ell(\l) \leq \inf_{r>0}\frac{\ell(\l r)}{\ell(r)}\leq \sup_{r>0}\frac{\ell(\l r)}{\ell(r)} \leq \frac{c}{\ell(1/\l)}.
\]
\end{remark}

For $\a\in(0,1]$, $\ell^\a$ is also a modulus of continuity with properties similar to those we have already mentioned for $\ell$ (monotonicity, concavity, semi-homogeneity, etc.).

\begin{definition}
Let $\W\ss\R^n$ open,
\begin{align*}
d(x):=\dist(x,\partial \Omega),\quad \text{ and }\quad d(x,y):=\min(d(x),d(y)).    
\end{align*}
 Given $\a,\beta\geq 0$ we construct the semi-norms
\begin{align*}
&[u]_{\mathcal L^\a(\W)} := \sup_{\substack{x,y\in \W\\x\neq y}}\frac{|u(x)-u(y)|}{\ell^\a(|x-y|)}, \\
&[u]^{(\b)}_{\mathcal L^\a(\W)} := \sup_{\substack{x,y \in \W\\x\neq y}} \ell^{\a+\b}(d(x,y))\frac{|u(x)-u(y)|}{\ell^\a(|x-y|)},
\end{align*}
and the norms
\begin{align*}
&\|u\|_{\mathcal L^\a(\W)} := \|u\|_{L^\8(\W)} + [u]_{\mathcal L^{\a}(\W)},\\
&\|u\|^{(\b)}_{\mathcal L^\a(\W)} := \sup_{x\in\W} \ell^{\b}(d(x))|u(x)| + [u]^{(\b)}_{\mathcal L^\a(\W)}.
\end{align*}
We also define the Banach space (Lemma \ref{banach})
\begin{align*}
\mathcal L^\a(\W):=\{u:\W\to\R\::\: \|u\|_{\cL^\alpha(\Omega)}<\infty\}
\end{align*}
 and its local version
\[
\mathcal L^\a_{loc}(\W) := \{u \in C(\W) \ | \ u \in \mathcal L^\a(B_r(x_0)) \ \text{ for all } \, \overline{B_r(x_0)} \ss\W\}.
\]
\end{definition}

Notice that, different to the classical Hölder spaces, our moduli of continuity encompass all the ranges of regularity $\a\geq0$, without separating the cases according to the integer part of $\a$. However, the threshold $\a=1$ still plays an important role in our results. 

For $\a=0$, the semi-norm $[\cdot]_{\mathcal L^0(\W)}$ measures the oscillation
\[
[u]_{\mathcal L^0(\W)} = \sup_{x,y\in\W} |u(x)-u(y)| = \sup_\W u-\inf_\W u =: \osc_\W u.
\]
The Banach space $\mathcal L^0(\W)$ is equivalent to $L^\8(\Omega)$.

A function $u$ belongs to the local space $\mathcal L^\a_{loc}(\W)$ whenever $\|u\|^{(\b)}_{\mathcal L^\a(\W)}<\8$ for some $\b\geq 0$. It may well be also the case that $u\in\mathcal L^\a_{loc}(\W)$, although $\|u\|^{(\b)}_{\mathcal L^\a(\W)}$ diverges for every $\b\geq0$. However, in this work, every $u\in \mathcal L^\a_{loc}(\W)$ is complemented with a bound on $\|u\|^{(\b)}_{\mathcal L^\a(\W)}$ for some $\b\geq 0$.

The exponent $\b=0$ gives the analogous to the non-dimensional norms and semi-norms as in the book by Gilbarg and Trudinger \cite[p. 61]{Gilbarg-Trudinger2001}. In the second order case, different exponents $\b>0$ can be used to compensate the homogeneities of operators with scale invariant properties. This allows to express interior estimates in terms of these weighted norms, c.f. \cite[p. 66]{Gilbarg-Trudinger2001}. In our case it plays a similar role even though the operators do not enjoy a scale invariance property, see for instance the statement in Theorem \ref{prop3} where different $\b$'s appear in the estimate.

The following lemma allows us to get estimates for $\|\cdot\|^{(\b)}_{\mathcal L^\a(\W)}$ once we get interior estimates over balls. A proof can be found in the appendix.

\begin{lemma}\label{prop2}
Let $\W\ss\R^N$ be an open set with bounded inner radius $R:=\sup_{x\in\W}\dist(x,\p\W)<\8$, $\a,\b\geq 0$, $r_0>0$, and $\l>1$. Given $u:\W\to \R$ the following estimates hold (meaning that either all the quantities are finite and the inequalities hold or each of the three sides is infinite),
\begin{align*}
&c\|u\|^{(\b)}_{\mathcal L^\a(\W)} \leq \sup_{\substack{B_{\l r}(x)\ss\W\\r\in(0,r_0)}} \1\ell^{\b}(r)\|u\|_{L^\8(B_{r}(x))} + \ell^{\a+\b}(r)[u]_{\mathcal L^{\a}(B_{r}(x))}\2 \leq C\|u\|^{(\b)}_{\mathcal L^\a(\W)},
\end{align*}
where $c,C>0$ depend only on $N$, $R$, $\a$, $\b$ , $r_0$, and $\l$.
\end{lemma}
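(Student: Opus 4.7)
The plan is to establish the two displayed inequalities separately, with the semi-homogeneity bound from Lemma \ref{prop1} (and its subsequent Remark) playing the role of the single technical workhorse: rescaling the argument of $\ell$ by a fixed factor changes its value by at most a multiplicative constant depending only on that factor, uniformly in the scale.

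For the upper bound, I would fix a ball $B_{\l r}(x_0)\ss\W$ with $r\in(0,r_0)$ and observe that every $x,y\in B_r(x_0)$ satisfies $d(x),d(y)\geq (\l-1)r$, so $d(x,y)\geq(\l-1)r$. The semi-homogeneity then yields $\ell(r)\leq C(\l)\ell(d(x))$ and $\ell(r)\leq C(\l)\ell(d(x,y))$, whence, multiplying the defining inequalities of the two summands in $\|u\|^{(\b)}_{\mathcal L^\a(\W)}$ by the appropriate powers of $\ell$ and taking suprema over $x,y\in B_r(x_0)$,
\[
\ell^{\b}(r)\|u\|_{L^\8(B_r(x_0))}+\ell^{\a+\b}(r)[u]_{\mathcal L^\a(B_r(x_0))}\leq C\|u\|^{(\b)}_{\mathcal L^\a(\W)}.
\]
Taking the sup over admissible balls then completes the right-hand inequality.

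For the left-hand inequality, I would pick for each $x\in\W$ the radius $r_x:=\min(d(x)/(2\l),r_0/2)$, which satisfies $r_x\in(0,r_0)$ and $B_{\l r_x}(x)\ss\W$. A second application of semi-homogeneity, combined with the bound $d(x)\leq R$ and the fact that $\ell$ is constant above $\r_0$, gives $\ell(d(x))\leq C(\l,r_0,R)\ell(r_x)$, which controls the pointwise weighted term in $\|u\|^{(\b)}_{\mathcal L^\a(\W)}$ by the middle quantity. For the weighted seminorm applied to $x,y\in\W$ (assume $d(x)\leq d(y)$, so $d(x,y)=d(x)$) I would split into two cases. If $|x-y|\geq d(x)/(4\l)$, then semi-homogeneity gives $\ell(|x-y|)\geq c\,\ell(d(x,y))$, so $\ell^{\a+\b}(d(x,y))|u(x)-u(y)|/\ell^\a(|x-y|)$ is dominated by $2C\,\ell^\b(d(x,y))\|u\|_{L^\8(\W)}$, hence by the pointwise term already estimated. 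If instead $|x-y|<d(x)/(4\l)$, then $y\in B_{r_x}(x)$ with $B_{\l r_x}(x)\ss\W$, and $\ell(d(x,y))\leq C\,\ell(r_x)$, so the contribution is bounded by $C\,\ell^{\a+\b}(r_x)[u]_{\mathcal L^\a(B_{r_x}(x))}$ and hence by the middle quantity.

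The main technical nuisance is purely bookkeeping: the truncation of $\ell$ at $\r_0=0.1$, the cap $r_x\leq r_0/2$, and the finiteness of the inner radius $R$ all enter the constants, but each transition is handled uniformly via semi-homogeneity. The assertion that the three quantities are simultaneously finite or infinite is then automatic from the two-sided bound.
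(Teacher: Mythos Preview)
Your approach is essentially the same as the paper's: both directions are handled by choosing, for a given point or pair of points, an admissible ball whose radius is a fixed fraction of the distance to the boundary (capped at $r_0$), and then using the semi-homogeneity of $\ell$ to compare $\ell(d(x))$ with $\ell(r)$. The paper's proof organizes the four comparisons $A_1\lesssim B_1$, $B_1\lesssim A_1$, $A_2\lesssim B$, $B_2\lesssim A_2$ in exactly the way you outline.

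There is one genuine slip in your ``far'' case $|x-y|\geq d(x)/(4\l)$. After correctly reaching $C\ell^\b(d(x,y))(|u(x)|+|u(y)|)$, you pass to $2C\ell^\b(d(x,y))\|u\|_{L^\infty(\W)}$ and claim this is controlled by the weighted pointwise term. It is not: $\|u\|_{L^\infty(\W)}$ can be much larger than $\sup_z\ell^\b(d(z))|u(z)|$ (think of $u$ blowing up like $\ell^{-\b}(d(\cdot))$ near $\p\W$), and the factor $\ell^\b(d(x,y))$ is evaluated at the wrong point to compensate. The fix is immediate and is what the paper does: stop at $\ell^\b(d(x,y))(|u(x)|+|u(y)|)$ and observe that, since $d(x,y)=d(x)\leq d(y)$ and $\ell$ is nondecreasing, each summand is bounded by $\sup_z\ell^\b(d(z))|u(z)|$. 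A second, purely cosmetic issue: your split threshold $d(x)/(4\l)$ does not always coincide with $r_x$ when $r_x=r_0/2$; simply split at $r_x$ itself (as the paper does) and the ``near'' case inclusion $y\in B_{r_x}(x)$ is automatic.
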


The following lemma is used to glue together interior and boundary estimates. Once again, its proof can be found in the appendix. Recall that $d(x):=\dist(x,\partial \Omega)$.

\begin{lemma}\label{prop4}
Given $\W\ss\R^N$ a bounded open set, $\a,\b\geq 0$, and $\w_0:[0,\8)\to[0,\8)$ a modulus of continuity, there exists a modulus of continuity $\w:[0,\8)\to[0,\8)$ depending on $\a$, $\b$, $\w_0$ and $\W$ such that the following holds: Given $u \in \mathcal L^\a_{loc}(\W)\cap C(\overline\W)$ such that the right-hand side below is finite, then the left-hand side is also bounded and
\begin{align*}
\sup_{\substack{x,y\in\overline\W\\x\neq y}}\frac{|u(x)-u(y)|}{\w(|x-y|)} \leq [u]^{(\b)}_{\mathcal L^\a(\W)} + \sup_{\substack{x\in\p\W\\y\in\overline\W\sm\{x\}}}\frac{|u(x)-u(y)|}{\w_0(|x-y|)}.
\end{align*}
In particular, if $\w_0 = \ell^{\a'}$ then $\w = C\ell^{\gamma}$ for $C$ and $\gamma$ depending on $\a$, $\a'$, $\b$, $\w_0$ and $\W$.
\end{lemma}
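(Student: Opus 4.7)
The plan is to find a modulus of continuity $\w$ with $|u(x)-u(y)|\le(M_1+M_2)\,\w(|x-y|)$ for all $x,y\in\overline\W$, where $M_1:=[u]^{(\b)}_{\mathcal L^\a(\W)}$ and $M_2$ denotes the boundary semi-norm on the right-hand side. I would first replace $\w_0$ by its concave envelope on $[0,\diam(\W)]$ (at the cost of a factor of $2$ in $M_2$), so that $\w_0(\l s)\le\l\w_0(s)$ for all $\l\ge 1$; then fix $x\ne y$ in $\overline\W$, set $r:=|x-y|$, WLOG $d(x)\le d(y)$, and let $\tilde x\in\p\W$ realize $|x-\tilde x|=d(x)$.

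The case split is on whether $d(x)$ is comparable to $r$ or larger. In the \emph{boundary regime} $d(x)\le r$, the triangle inequality $u(x)\to u(\tilde x)\to u(y)$ together with $|\tilde x -y|\le 2r$ and $\w_0(2r)\le 2\w_0(r)$ gives $|u(x)-u(y)|\le 3M_2\w_0(r)$. In the \emph{interior regime} $d(x)>r$, two complementary bounds are available: the interior semi-norm yields
\[
|u(x)-u(y)|\le M_1\,\ell^\a(r)/\ell^{\a+\b}(d(x))
\]
(sharp in $r$ but deteriorating as $d(x)\to 0^+$), while the same chain through $\tilde x$, now with $|\tilde x-y|\le 2d(x)$, yields $|u(x)-u(y)|\le 3M_2\w_0(d(x))$ (sharp in $d(x)$ but $r$-insensitive). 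The elementary inequality $\min(aM_1,bM_2)\le(M_1+M_2)\min(a,b)$ combines them into
\[
|u(x)-u(y)|\le(M_1+M_2)\,\min\!\big(\ell^\a(r)/\ell^{\a+\b}(d(x)),\,3\w_0(d(x))\big).
\]

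The core observation is that $d\mapsto\ell^\a(r)/\ell^{\a+\b}(d)$ is decreasing while $d\mapsto\w_0(d)$ is increasing, so their minimum is unimodal in $d$ and attains its supremum at the crossing point $d^*(r)$ where the two quantities agree. Since $\ell^{\a+\b}(d)=\ell^{\a+\b}(\r_0)$ is constant for $d\ge\r_0:=1/10$, the sub-case $d(x)\ge\r_0$ already reduces to $|u(x)-u(y)|\le CM_1\ell^\a(r)$. Putting the three regimes together, I would define
\[
\w(r):=3\w_0(r)+\ell^{-\b}(\r_0)\ell^\a(r)+\sup_{d\in(r,\r_0)}\min\!\big(\ell^\a(r)/\ell^{\a+\b}(d),\,3\w_0(d)\big),
\]
which depends only on $\a,\b,\w_0,\W$. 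That $\w(r)\to 0$ as $r\to 0^+$ follows by casework on the crossing: either $d^*(r)\in(r,\r_0)$ and the sup equals $3\w_0(d^*(r))\to 0$ (since $d^*(r)\to 0^+$, forced by the crossing equation), or $d^*(r)$ falls outside the interval and the sup is bounded by $C\ell^\a(r)$.

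For the specific log-Hölder choice $\w_0=\ell^{\a'}$, the crossing equation becomes $\ell^\a(r)=\ell^{\a+\a'+\b}(d^*)$, which solves explicitly so that the sup term equals $C\ell^\gamma(r)$ with $\gamma:=\a\a'/(\a+\a'+\b)\le\min(\a,\a')$. Because $\ell\le 1$, this $\ell^\gamma$ term dominates both $\ell^{\a'}$ and $\ell^\a$ in the definition of $\w$, giving $\w=C\ell^\gamma$ as stated. I expect the main obstacle to be the interior sub-regime $r<d(x)<\r_0$: neither estimate alone controls $|u(x)-u(y)|$ in terms of $r$ (the interior estimate explodes as $1/\ell^\b(d(x))$ near the boundary, while the boundary chain carries no $r$-dependence), so the min-interpolation step is the crucial device that transfers the two-scale $(r,d(x))$ information into a single-scale modulus in $r$.
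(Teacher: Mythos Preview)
Your proposal is correct and follows the same core idea as the paper: bound $|u(x)-u(y)|$ simultaneously by the interior estimate $M_1\ell^\a(r)\ell^{-(\a+\b)}(d(x,y))$ and by a triangle-inequality chain through nearest boundary points, then take the minimum of the two. The paper's execution differs in packaging: it defines
\[
\w(r) := \sup_{\substack{x,y\in \overline{\W}\\|x-y|\leq r}}\min\Big(\ell^{-(\a+\b)}(d(x,y))\ell^\a(r),\ \w_0(d(x))+\w_0(d(y))+\w_0(d(x)+d(y)+r)\Big),
\]
a supremum over all admissible pairs $(x,y)$ rather than your one-parameter supremum over $d\in(r,\r_0)$, and then verifies $\w(0^+)=0$ by a short contradiction argument rather than your direct crossing-point analysis. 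Your reduction to a single parameter (after the WLOG $d(x)\le d(y)$ and the concave-envelope preprocessing) is a genuine simplification, and it pays off in the log-H\"older case: you obtain the explicit exponent $\gamma=\a\a'/(\a+\a'+\b)$, whereas the paper only shows $\w\le C\ell^{\gamma}$ for some unspecified sufficiently small $\gamma$ via a further case analysis. One minor correction: passing to the concave envelope $\tilde\w_0\ge\w_0$ makes the boundary semi-norm $\tilde M_2\le M_2$, so that step costs nothing rather than a factor of $2$.
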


\subsection{Ellipticity}\label{subsec:dini}

Recall that $C^{Dini}(\W)$ is the collection of functions which admit a modulus of continuity $\w$ such that
\[
\int_0^1\frac{\w(\r)}{\r}d\r <\8.
\]
Its local version $C^{Dini}_{loc}(\W)$ denotes the set of functions $u:\W\to\R$ such that $u\in C^{Dini}(B_r(x_0))$ for every $\overline{B_r(x_0)}\ss\W$. In particular, $x\mapsto L_K u(x)$ is well defined in $\W$ if $u\in C^{Dini}_{loc}(\W)$ (see, for example, the proof of \cite[Proposition 1.3]{MR3995092}).

\begin{definition}\label{def}
A kernel on $\W\ss\R^N$ is a bounded measurable function $K \in L^\8(\W\times B_1)$. It is used to construct a linear operator $L_K$ that can be evaluated at $u \in L^1(B_1(\W))\cap C^{Dini}_{loc}(\W)$ over $x\in\W$ in the following way
\[
L_Ku(x) := \int_{B_1(x)} \frac{u(x)-u(y)}{|y-x|^N}K(x,y-x)dy.
\]
\end{definition}

As mentioned in the introduction, we may impose the following additional properties.
\begin{itemize}
    \item[\namedlabel{itm:ellip}{\textbf{(E)}}] We say that $L_K$ is \emph{elliptic} if $K$ is non-negative.
    \item[\namedlabel{itm:uellip}{\textbf{(UE)}}] We say that $L_K$ is \emph{uniformly elliptic with respect to $0<\l\leq \L$} if $\l\leq K(x,y)\leq \L$ for all $(x,y)\in\W\times B_1$.
    \item[\namedlabel{itm:ti}{\textbf{(TI)}}] We say that $L_K$ is \emph{translation invariant} if $K(x,y) = K(y)$, namely, if $K$ only depends on its second entry $y\in B_1$.
\end{itemize}

Ellipticity is the fundamental feature which allows us to implement comparison techniques in our analysis. By this we mean that the operator enjoys a generalization of the second derivative test, which in the classical second order case can be stated in the following way: Whenever the graphs of two smooth ordered functions are in contact at a given point, their Hessians are ordered as well at the contact point. Hence any monotone function of the Hessian (e.g. the Laplacian) is also ordered at the contact point.

\begin{definition} Let $\W_v,\W_u\subset \R^N$ be open sets, $\Omega\ss  \W_u\cap\W_v$ open, and let $x_0\in\Omega$. We say that a function $v:\W_v\to \R$ touches $u:\W_u\to\R$ from above over $\Omega$ at the so called contact point $x_0$, if $v\geq u$ in $\Omega$ and $u(x_0)=v(x_0)$.
\end{definition}

This concept could also be expressed by saying that $u$ touches $v$ from below at $x_0$ over $\Omega$.

The following lemma reflects the ellipticity of our operators in the same spirit of the second derivative test, but adapted to our integrodifferential operators. Notice that the convention of ellipticity that we follow falls in the class where \textit{minus} the Laplacian is considered to be elliptic.

\begin{lemma}\label{prop:elip}
Let $r>0$, $x_0\in\R^N$, $K\in L^\infty(B_r(x_0)\times B_1)$, and let $L_K$ satisfy the ellipticity condition \ref{itm:ellip}.  Let $u,v\in L^1(B_1(x_0))\cap C^{Dini}_{loc}(B_r(x_0))$. If $v$ touches $u$ from above over $B_1(x_0)$ at $x_0$, then $L_Ku(x_0)\geq L_Kv(x_0)$.
\end{lemma}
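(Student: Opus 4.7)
The statement is the integrodifferential analogue of the elementary principle that integration against a non-negative measure is monotone, so the proof is essentially one line once the integrals are known to be absolutely convergent. First I would confirm that both $L_K u(x_0)$ and $L_K v(x_0)$ are well-defined as Lebesgue integrals. The hypothesis $u, v \in L^1(B_1(x_0))$ controls the contribution away from the singularity. Near $y = x_0$, the Dini continuity of $u$ (and $v$) at $x_0$ provides a modulus $\w_u$ with $\int_0^1 \w_u(\r)/\r\, d\r < \8$ such that
\[
\left|\frac{u(x_0) - u(y)}{|y-x_0|^N} K(x_0, y-x_0)\right| \leq \|K\|_{L^\8}\, \frac{\w_u(|y-x_0|)}{|y-x_0|^N},
\]
whose integral over $B_1(x_0)$ reduces in polar coordinates to $C\int_0^1 \w_u(\r)/\r\, d\r < \8$. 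Hence both $L_K u(x_0)$ and $L_K v(x_0)$ exist as absolutely convergent Lebesgue integrals.

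Next I would extract from the touching hypothesis the pointwise inequality
\[
u(x_0) - u(y) \geq v(x_0) - v(y) \qquad \text{for every } y \in B_1(x_0),
\]
which follows directly from $u(x_0) = v(x_0)$ combined with $u(y) \leq v(y)$ on $B_1(x_0)$. Multiplying by the non-negative weight $|y-x_0|^{-N} K(x_0, y-x_0)$—here the ellipticity assumption \ref{itm:ellip} is the only thing used about $K$—preserves the inequality pointwise, and integrating over $B_1(x_0)$ yields $L_K u(x_0) \geq L_K v(x_0)$.

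There is no genuine obstacle. The only technical point is the verification of integrability near $x_0$, which is dictated entirely by the Dini condition built into Definition \ref{def}; this is why the lemma requires $u, v \in C^{Dini}_{loc}$ rather than merely continuity. Neither translation invariance, uniform ellipticity, nor any regularity of $K$ in its first variable is needed for this comparison — only the sign condition \ref{itm:ellip}. This justifies viewing Lemma \ref{prop:elip} as the correct nonlocal substitute for the second-derivative test used throughout Krylov--Safonov style arguments later in the paper.
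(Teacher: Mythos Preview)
Your proposal is correct and essentially identical to the paper's own proof: the paper sets $w := u - v \leq 0$, observes that $w$ attains its maximum in $B_1(x_0)$ at $x_0$, and concludes $L_K u(x_0) - L_K v(x_0) = L_K w(x_0) = \int_{B_1(x_0)} \frac{w(x_0)-w(y)}{|y-x_0|^N} K(x_0,y-x_0)\,dy \geq 0$, which is exactly your pointwise inequality $u(x_0)-u(y) \geq v(x_0)-v(y)$ repackaged. Your additional remarks on absolute convergence are correct (and implicit in the paper's Definition~\ref{def}) but not needed for the argument itself.
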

\begin{proof}
Let $w:=u-v\leq 0$.  Then $w$ has a global maximum in $B_1(x_0)$ at $x_0$ and
\[
L_Ku(x_0) - L_Kv(x_0) = L_Kw(x_0) = \int_{B_1(x_0)}\frac{w(x_0)-w(y)}{|y-x_0|^N}K(x_0,y-x_0)dy \geq 0.
\]
\end{proof}

\subsection{Boundedness}\label{sec:bdd}

The main goal of this section is to show that under suitable hypotheses, $L_K$ maps $L^\8(B_1(\W))\cap \mathcal L^{1+\a}_{loc}(\W)$ to $\mathcal L^{\a}_{loc}(\W)$. Our proof requires a technical assumption on the oscillation of the kernel (recall that even in the second order case, the map $u\in C^{2,\a}(\W) \mapsto \sum_{i,j=1}^n a_{ij}(x)\p_{ij}u(x) \in C^\a(\W)$ requires that the coefficients $(a_{ij})$ are at least $\a$-Hölder continuous). 

\begin{definition}\label{def:reg}
Let $\W\subset \R^N$ be a bounded open set and let $K\in L^\infty(\W\times B_1)$ be a kernel extended by zero in $\W\times (\R^N\sm B_1)$. Given $\a >0$, we say that the operator $L_K$ is $\a$-regular with constant $\L\geq 0$ if, for every $z \pm \frac{w}{2}\in\W$,
\begin{align*}
 \int_{\R^N\sm B_{2|w|}} \left|\frac{K(z+\frac{w}{2},\xi+\frac{w}{2})}{|\xi+\frac{w}{2}|^N}-\frac{K(z-\frac{w}{2},\xi-\frac{w}{2})}{|\xi-\frac{w}{2}|^N}\right|\ell(|\xi|)d\xi \leq \L\ell^\a(|w|).    
\end{align*}
\end{definition}

In order to skip some technicalities on a first reading of the following theorem, one may omit the $\a$-regularity assumption on the kernel and just consider the case $K(x,y) = \chi_{B_1}(y)$.

\begin{theorem}\label{prop3}
Let $\a>0$, $\W\ss\R^N$ be an open bounded set, $K\in L^\infty(\Omega\times B_1)$, and let $L_K$ be $\a$-regular (with constant $\L_0$) and satisfy the ellipticity condition \ref{itm:ellip}. For any $u \in L^\8(B_1(\W))\cap \mathcal L^{1+\a}_{loc}(\W)$ such that $[u]_{\mathcal L^{1+\a}(\W)}^{(0)} <\8$, it holds that $L_K u \in \mathcal L^{\a}_{loc}(\W)$ and 
\[
\|L_Ku\|^{(1)}_{\mathcal L^\a(\W)} \leq C([u]_{\cL^0(B_1(\W))} + [u]_{\mathcal L^{1+\a}(\W)}^{(0)})
\]
for some $C>0$ depending only on $\a$, $N$, $\L_0$, $\|K\|_{L^\infty(\Omega\times B_1)}$, and $\diam(\W)$.
\end{theorem}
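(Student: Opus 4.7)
I bound the two components of $\|L_K u\|^{(1)}_{\cL^\a(\W)}$ separately: the weighted pointwise term $\ell(d(x))|L_Ku(x)|$ and the weighted log-Hölder seminorm. For the pointwise term, fix $x\in\W$, set $r:=\min(d(x)/2,1/10)$, and, writing $y=x+\eta$, split $L_Ku(x)=I+II$ with $I$ over $|\eta|<r$ and $II$ over $r\le|\eta|<1$. In $I$ the depth $d(x+\eta)\ge d(x)/2$ together with semi-homogeneity (Lemma \ref{prop1}) yields $|u(x)-u(x+\eta)|\le C[u]^{(0)}_{\cL^{1+\a}(\W)}\ell^{1+\a}(|\eta|)/\ell^{1+\a}(d(x))$; coupled with $\int_0^r\ell^{1+\a}(\r)\r^{-1}d\r=\ell^\a(r)/\a$ this gives $|I|\le C[u]^{(0)}_{\cL^{1+\a}(\W)}\ell^\a(r)/\ell^{1+\a}(d(x))$. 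In $II$ the oscillation bound $|u(x)-u(x+\eta)|\le[u]_{\cL^0(B_1(\W))}$ and $\int_{B_1\setminus B_r}|\eta|^{-N}d\eta\asymp 1/\ell(r)$ give $|II|\le C[u]_{\cL^0(B_1(\W))}/\ell(r)$. Since $\ell(r)\asymp\ell(d(x))$, multiplying by $\ell(d(x))$ closes the pointwise estimate.

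\textbf{Seminorm setup and inner region.} Fix $x_1,x_2\in\W$, $w:=x_1-x_2$, $z:=(x_1+x_2)/2$, $\delta:=d(x_1,x_2)$. When $|w|\ge\delta/10$ the triangle inequality with the pointwise bound and $\ell(|w|)\asymp\ell(\delta)$ suffices. When $|w|<\delta/10$, the substitution $y=z+\xi$ yields
\[
L_Ku(x_1)-L_Ku(x_2)=\int\bigl([u(x_1)-u(z+\xi)]J_1(\xi)-[u(x_2)-u(z+\xi)]J_2(\xi)\bigr)d\xi,
\]
where $J_i(\xi)$ encodes $K(x_i,\cdot)\chi_{B_1}/|\cdot|^N$ in the variable $\xi$. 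I split the integral at $|\xi|=2|w|$. In the inner region $|\xi|\le 2|w|$, the depth $d(z+\xi)\ge\delta/2$ and the log-Hölder control on $u$ combined with the elementary estimate $\int_{B_{5|w|/2}}\ell^{1+\a}(|\eta|)|\eta|^{-N}d\eta\le C\ell^\a(|w|)$ give exactly $C[u]^{(0)}_{\cL^{1+\a}(\W)}\ell^\a(|w|)/\ell^{1+\a}(\delta)$ for each of the two terms.

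\textbf{Outer region via symmetrization.} For $|\xi|>2|w|$, I use
\[
aJ_1-bJ_2=\frac{1}{2}(a-b)(J_1+J_2)+\frac{1}{2}(a+b)(J_1-J_2),
\]
with $a:=u(x_1)-u(z+\xi)$ and $b:=u(x_2)-u(z+\xi)$. The antisymmetric piece has $a-b=u(x_1)-u(x_2)$ constant in $\xi$; the log-Hölder bound $|u(x_1)-u(x_2)|\le[u]^{(0)}_{\cL^{1+\a}(\W)}\ell^{1+\a}(|w|)/\ell^{1+\a}(\delta)$ and the logarithmic integral $\int_{|\xi|>2|w|}(J_1+J_2)\,d\xi\lesssim 1/\ell(|w|)$ combine to the target. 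The symmetric piece invokes the $\a$-regularity hypothesis (Definition \ref{def:reg}), $\int_{|\xi|>2|w|}|J_1-J_2|\ell(|\xi|)\,d\xi\le\L_0\ell^\a(|w|)$, and I further split at $|\xi|=\delta/4$. For $2|w|<|\xi|\le\delta/4$, $u$'s log-Hölder bound gives $|a+b|\le C[u]^{(0)}_{\cL^{1+\a}(\W)}\ell^{1+\a}(|\xi|)/\ell^{1+\a}(\delta)$ and the inequality $\ell^{1+\a}\le\ell$ allows absorbing $\ell^{1+\a}(|\xi|)$ into the $\a$-regularity weight; for $|\xi|>\delta/4$ only the oscillation $|a+b|\le 2[u]_{\cL^0(B_1(\W))}$ survives, but $\ell(|\xi|)\ge c\ell(\delta)$ produces a factor $\ell^\a(|w|)/\ell(\delta)\le\ell^\a(|w|)/\ell^{1+\a}(\delta)$.

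\textbf{Main obstacle.} The delicate step is the symmetric outer piece: $u$'s log-Hölder regularity degrades close to $\p\W$, forcing the split at $|\xi|\sim\delta$, where the $\ell(|\xi|)$-weight in the $\a$-regularity of $K$ must exactly offset the loss from passing to the global oscillation bound for $u$.
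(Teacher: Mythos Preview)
Your argument is correct and follows the same architecture as the paper's: split the pointwise evaluation of $L_Ku(x)$ at a scale comparable to $d(x)$, and for the seminorm split at the scale $|w|$, then control the kernel-difference term via the $\a$-regularity hypothesis with a further split at the boundary-distance scale. The differences are organizational rather than essential. The paper first invokes the covering Lemma~\ref{prop2} to reduce to unweighted estimates over balls $B_{6r}(x_0)\subset\W$, works with the local $[u]_{\cL^{1+\a}(B_{5r}(x_0))}$, and carves out a thin annulus $J_3=B_{1+\delta/2}(z)\sm B_{1-\delta/2}(z)$ (where only one kernel is present) before applying the \emph{discrete product rule} $aJ_1-bJ_2=(a-b)J_1+b(J_1-J_2)$; you instead work directly with the weighted seminorm $[u]^{(0)}_{\cL^{1+\a}(\W)}$, use the symmetrized identity $aJ_1-bJ_2=\tfrac12(a-b)(J_1+J_2)+\tfrac12(a+b)(J_1-J_2)$, and let the zero-extension built into Definition~\ref{def:reg} absorb the thin annulus automatically. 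Your route is slightly more streamlined (no auxiliary covering lemma, no separate $J_3$), while the paper's localization to balls makes the interior nature of the estimate more transparent and interfaces cleanly with the later bootstrap in Section~\ref{hoe}.
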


\begin{proof}
By Lemma \ref{prop2}, it suffices to show that there exists some radius $r_0\in (0,\r_0)$\footnote{Recall that $\r_0=0.1$ and $\ell(r) = |\ln r|^{-1}$ for $r\in(0,\r_0)$.} sufficiently small such that, for any $r \in (0,r_0)$ and $B_{6r}(x_0) \ss \W$,
\[
\ell(r)\|L_Ku\|_{L^\8(B_r(x_0))} + \ell^{1+\a}(r)[L_Ku]_{\mathcal L^{\a}(B_r(x_0))}\leq C( [u]_{\cL^0(B_1(\W))} + \ell^{1+\a}(r)[u]_{\mathcal L^{1+\a}(B_{5r}(x_0))}).
\]

Let $\Lambda:=\max\{\|K\|_{L^\infty(\Omega\times B_1)},\L_0\}>0$.

\textbf{$L^\8$ term:} For $x\in B_r(x_0)$,
\begin{align*}
|L_Ku(x)| &\leq \int_{B_1(x)} \frac{|u(x)-u(\xi)|}{|\xi-x|^N}K(x,\xi-x)d\xi\\
&\leq \omega_N\L [u]_{\mathcal L^{1+\a}(B_{5r}(x_0))}\int_0^r \frac{\ell^{\a+1}(\r)}{\r}d\r + \omega_N\L[u]_{\cL^0(B_1(\W))}\int_r^1\frac{d\r}{\r}\\
&= \frac{\omega_N\L}{\alpha}\ell^{\a}(r)[u]_{\mathcal L^{1+\a}(B_{5r}(x_0))}+\omega_N\L\ell^{-1}(r)[u]_{\cL^0(B_1(\W))}.
\end{align*}
Taking the supremum on $x\in B_r(x_0)$ and multiplying by $\ell(r)$,
\[
\ell(r)\|L_Ku\|_{L^\8(B_r(x_0))} \leq C([u]_{\cL^0(B_1(\W))} + \ell^{1+\a}(r)[u]_{\mathcal L^{1+\a}(B_{5r}(x_0))}).
\]

\textbf{$\mathcal L^\a$ term:} Let $x,y\in B_r(x_0)$, $\delta:=|x-y|<2r$, and $z:=\frac{x+y}{2}\in B_r(x_0)$.

For $r_0\leq 1/5$ we guarantee that the following inclusions hold true,
\[
B_{2\delta}(z) \ss B_{1-\delta/2}(z) \ss B_1(x) \cup B_1(y) \ss B_{1+\delta/2}(z),
\]
this allows us to split $Lu(x)-Lu(y)$ in the following three terms
\begin{align*}
Lu(x)-Lu(y) &= J_1+J_2+J_3,
\end{align*}
where 
\begin{align*}
J_1&:=\int_{B_{2\delta}(z)}I(x,y,\xi) d\xi,\\
J_2&:=\int_{B_{1-\delta/2}(z)\sm B_{2\delta}(z)}I(x,y,\xi) d\xi,\\
J_3&:=\int_{B_{1+\delta/2}(z)\sm B_{1-\delta/2}(z)}I(x,y,\xi) d\xi,\\
I(x,y,\xi) &:=\frac{u(x)-u(\xi)}{|\xi-x|^N}K(x,\xi-x)-\frac{u(y)-u(\xi)}{|\xi-y|^N}K(y,\xi-y).
\end{align*}
Recall that $K(z,\cdot)$ is extended by zero outside $B_1$. Our goal is to bound the $J_i$'s by 
\begin{align*}
\ell^\a(\delta)(\ell^{-1-\a}(r)[u]_{\cL^0(B_1(\W))}+[u]_{\cL^{1+\a}(B_{5r}(x_0))}).    
\end{align*}

For $J_3$, the integrand is bounded by a multiple of $[u]_{\cL^0(B_1(\W))}$ and the domain of integration has measure proportional to $\delta$. Hence,
\[
J_3 \leq C\delta[u]_{\cL^0(B_1(\W))} \leq C\ell^\a(\delta)[u]_{\cL^0(B_1(\W))}.
\]
 
In $J_1$ we use the $\cL^{1+\a}$ control over $B_{2\delta}(z) \ss B_{5r}(x_0)$:
\begin{align*}
J_1 \leq 2 \L\omega_N [u]_{\mathcal L^{1+\a}(B_{5r}(x_0))}\int_0^{2\delta}\frac{\ell^{1+\a}(\r)}{\r}d\r \leq 4\L\frac{\omega_N}{\alpha}\ell^\a(\delta)[u]_{\mathcal L^{1+\a}(B_{5r}(x_0))}.
\end{align*}

In $J_2$ we use the discrete product rule for the integrand:
\[
I(x,y,\xi) = \underbrace{\frac{u(x)-u(y)}{|\xi-x|^N}K(x,\xi-x)}_{=:I_1(x,y,\xi)} + \underbrace{(u(y)-u(\xi))\1\frac{K(x,\xi-x)}{|\xi-x|^N}-\frac{K(y,\xi-y)}{|\xi-y|^N}\2}_{=:I_2(x,y,\xi)}.
\]

For $I_1$ we are allowed to replace the denominator $|\xi-x|\sim|\xi-z|$, because $x \in B_\delta(z)$ and $\xi \in \R^N\sm B_{2\delta}(z)$. Hence,
\[
\int_{B_{1-\delta/2}(z)\sm B_{2\delta}(z)} |I_1(x,y,\xi)|d\xi \leq C\frac{\ell^{1+\a}(\delta)}{\ell(2\delta)}[u]_{\mathcal L^{1+\a}(B_{5r}(x_0))} \leq C\ell^{\a}(\delta)[u]_{\mathcal L^{1+\a}(B_{5r}(x_0))}. 
\]

Finally, to estimate the integral of $I_2$ we use the $\a$-regularity hypothesis (note that the case $K=1$ is simpler and follows using the fundamental theorem of calculus, see Lemma \ref{lem:alphareg})
\begin{align*}
&\int_{B_{1-\delta/2}(z)\sm B_{2\delta}(z)}\left|\frac{K(x,\xi-x)}{|\xi-x|^N}-\frac{K(y,\xi-y)}{|\xi-y|^N}\right|\ell(|\xi-z|)d\xi\\
&=\int_{B_{1-\delta/2}\sm B_{2\delta}}\left|\frac{K(x,\xi+\frac{y-x}{2})}{|\xi+\frac{y-x}{2}|^N}-\frac{K(y,\xi+\frac{x-y}{2})}{|\xi+\frac{x-y}{2}|^N}\right|\ell(|\xi|)d\xi
\leq C\ell^\a(\delta).
\end{align*}

This allows us to finally estimate the integral of $I_2$ by splitting the domain of integration $B_{1-\delta/2}(z)\sm B_{2\delta}(z)$ into $B_{4r}(z)\sm B_{2\delta}(z)$ and $B_{1-\delta/2}(z)\sm B_{4r}(z)$. Over the small annulus we bound $|u(y)-u(\xi)|$ using the control in $\cL^{1+\a}(B_{5r(x_0)})$:
\begin{align*}
 &\int_{B_{4r}(z)\sm B_{2\delta}(z)} |I_2(x,y,\xi)|d\xi\\
&\leq C[u]_{\cL^{1+\a}(B_{5r}(x_0))}\int_{B_{1-\delta/2}(z)\sm B_{2\delta}(z)} \left|\frac{K(x,\xi-x)}{|\xi-x|^N}-\frac{K(y,\xi-y)}{|\xi-y|^N}\right|\ell^{1+\a}(|\xi-z|)d\xi,\\
&\leq C[u]_{\cL^{1+\a}(B_{5r}(x_0))}\ell^\a(\delta)
\end{align*}
and over the larger annulus we use the bound for the oscillation:
\begin{align*}
&\int_{B_{1-\d/2}(z)\sm B_{4r}(z)} |I_2(x,y,\xi)|d\xi\\ &\leq C[u]_{\cL^0(B_1(\W))}\ell^{-1}(r)\int_{B_{1-\delta/2}(z)\sm B_{4r}(z)} \left|\frac{K(x,\xi-x)}{|\xi-x|^N}-\frac{K(y,\xi-y)}{|\xi-y|^N}\right|\ell(|\xi-z|)d\xi,\\
&\leq C[u]_{\cL^0(B_1(\W))}\ell^{-1}(r)\ell^\a(\delta).
\end{align*}
In any case,
\begin{align*}
J_2\leq \int_{B_{1-\delta/2}(z)\sm B_{2\delta}(z)} |I_2(x,y,\xi)|d\xi \leq C\ell^\a(\delta)(\ell^{-1}(r)[u]_{\cL^0(B_1(\W))}+[u]_{\cL^{1+\a}(B_{5r}(x_0))}).
\end{align*}
This ends the proof.
\end{proof}

Here is a practical way to verify if $L_K$ is 1-regular (and thus $\a$-regular for any $\a\in(0,1]$).

\begin{lemma}\label{lem:alphareg}
Let $\W\ss\R^N$ be a convex set. If $K \in L^\infty(\W\times B_1)\cap C^1(\W\times (B_1\sm\{0\}))$ is a kernel such that 
\begin{align}\label{J:a}
|D_x K(x,y)|+|D_yK(x,y)| \leq \L |y|^{-1}\qquad \text{for every $(x,y) \in \W\times (B_1\sm\{0\})$},
\end{align}
then $L_K$ is $1$-regular.
\end{lemma}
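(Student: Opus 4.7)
Fix $z\pm w/2\in\W$, set $x=z+w/2$, $y=z-w/2$, and write the quantity to be estimated as
\[
I:=\int_{\R^N\setminus B_{2|w|}}\left|\frac{K(x,\xi+\tfrac{w}{2})}{|\xi+\tfrac{w}{2}|^N}-\frac{K(y,\xi-\tfrac{w}{2})}{|\xi-\tfrac{w}{2}|^N}\right|\ell(|\xi|)\,d\xi.
\]
Since $K$ has been extended by zero outside $B_1$ in the second variable, the integrand vanishes when both $\xi\pm w/2$ lie outside $\overline{B_1}$. I would therefore decompose the domain of integration into $R_1:=\{\xi:\xi+\tfrac{w}{2},\,\xi-\tfrac{w}{2}\in B_1\}\setminus B_{2|w|}$ and the boundary layer $R_2$ consisting of those $\xi\in\R^N\setminus B_{2|w|}$ for which exactly one of $\xi\pm w/2$ lies in $B_1$. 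The set $R_2$ is contained in an annulus about $\p B_1$ of radial width at most $|w|$, so $|R_2|\leq C|w|$.

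\textbf{Main region $R_1$.} Here both endpoints are in $B_1\setminus\{0\}$, and since $\W$ and $B_1$ are convex, the straight segment $t\mapsto(z+tw/2,\xi+tw/2)$, $t\in[-1,1]$, stays in $\W\times(B_1\setminus\{0\})$; the condition $|\xi|\geq 2|w|$ ensures $|\xi+tw/2|\geq 3|\xi|/4$, which guarantees nondegeneracy and will control the denominators. Applying the fundamental theorem of calculus and the chain rule gives
\[
\frac{d}{dt}\left[\frac{K(z+tw/2,\xi+tw/2)}{|\xi+tw/2|^N}\right]=\frac{(D_xK+D_yK)\cdot(w/2)}{|\xi+tw/2|^N}-\frac{NK\,(\xi+tw/2)\cdot(w/2)}{|\xi+tw/2|^{N+2}}.
\]
The hypothesis \eqref{J:a} together with $\|K\|_{L^\infty}<\infty$ bounds this derivative by $C|w|/|\xi+tw/2|^{N+1}$, and then the nondegeneracy bound produces the pointwise estimate
\[
\left|\frac{K(x,\xi+\tfrac{w}{2})}{|\xi+\tfrac{w}{2}|^N}-\frac{K(y,\xi-\tfrac{w}{2})}{|\xi-\tfrac{w}{2}|^N}\right|\leq C\frac{|w|}{|\xi|^{N+1}} \qquad \text{on } R_1.
\]
Passing to spherical coordinates and using $R_1\subset B_{D}$ for $D:=1+|w|/2$, the contribution of $R_1$ to $I$ is bounded by $C|w|\int_{2|w|}^{D}\ell(r)\,r^{-2}\,dr$.

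\textbf{Extracting the logarithmic factor and $R_2$.} The remaining step — and the main obstacle — is to pull a factor $\ell(|w|)$ out of the integral $\int_{2|w|}^{\rho_0}r^{-2}\ell(r)\,dr=\int_{2|w|}^{\rho_0}r^{-2}|\ln r|^{-1}\,dr$; the naive bound $\ell(r)\leq\ell(\rho_0)$ only yields a constant. I would instead use integration by parts (or the substitution $u=1/r$, turning the integrand into the logarithmic integrand $1/\ln u$) to obtain, for $|w|$ small,
\[
\int_{2|w|}^{\rho_0}\frac{dr}{r^2|\ln r|}\leq \frac{C}{|w|\,|\ln|w||}=\frac{C\,\ell(|w|)}{|w|},
\]
while the tail $\int_{\rho_0}^{D}r^{-2}\ell(r)\,dr$ is a harmless constant. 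Multiplying by $|w|$ gives a contribution $\leq C\ell(|w|)+C|w|$ from $R_1$. On $R_2$, the integrand is uniformly bounded (both $|\xi\pm w/2|^{-N}$ and $\ell(|\xi|)$ are $O(1)$ there), so its contribution is at most $C\|K\|_{L^\infty}|w|$. Finally, using that $|w|\leq C\,\ell(|w|)$ on the bounded range of relevant $|w|$ (which holds since $|w|/\ell(|w|)=|w||\ln|w||\to0$ as $|w|\to0$, and $\ell$ is bounded below on any bounded interval away from $0$), both pieces combine to $I\leq C\,\ell(|w|)$. This is precisely $1$-regularity, and $\a$-regularity for any $\a\in(0,1]$ follows since $\ell(|w|)\leq\ell^{\a}(|w|)$ when $\ell(|w|)\leq 1$.
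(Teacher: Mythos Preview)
Your proof is correct and follows essentially the same architecture as the paper's: the same boundary-layer/interior decomposition, the same pointwise bound $C|w|\,|\xi|^{-(N+1)}$ via the fundamental theorem of calculus along the segment $t\mapsto(z+tw/2,\xi+tw/2)$, and the same reduction to the scalar integral $|w|\int_{2|w|}^{1}\ell(\rho)\rho^{-2}\,d\rho$.

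The only noteworthy difference is in how that last integral is handled. The paper invokes the semi-homogeneity property (Lemma~\ref{prop1}) in the form $(|w|/\rho)^{1/2}\leq c\,\ell(|w|)/\ell(\rho)$, which converts the integrand to $C|w|^{1/2}\ell(|w|)\rho^{-3/2}$ and yields the bound after an elementary integration. You instead estimate $\int_{2|w|}^{\rho_0}r^{-2}|\ln r|^{-1}\,dr$ directly by integration by parts (or the substitution $u=1/r$ and the asymptotic $\mathrm{li}(x)\lesssim x/\ln x$), arriving at the same $C\ell(|w|)/|w|$. Your route is slightly more elementary in that it does not appeal to Lemma~\ref{prop1}; the paper's route is more in keeping with the systematic use of $\ell$'s semi-homogeneity throughout the article. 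Both are clean and equally short. One cosmetic point: like the paper, you should state explicitly at the outset that it suffices to treat $|w|$ small (say $|w|<1/5$), since for larger $|w|$ the domain of integration is either empty or bounded and $\ell(|w|)$ is bounded below.
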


\begin{remark}
Notice that, for $K$ translation invariant, the assumption \eqref{J:a} plays a role in the existence result in \cite[Theorem 1.6]{feulefack2021nonlocal} and, as a consequence, also in our main result Theorem \ref{fredholm}. We conjecture that it should also be possible to obtain existence of a classical solution under the weaker assumption of $1$-regularity.
\end{remark}

\begin{proof}
Recall that $K$ is trivially extended over $\Omega\times (\R^N\backslash B_1)$. Fix $z,w\in \R^N$ such that $z\pm \frac{w}{2}\in \Omega$ and assume, without loss of generality, that $|w|\in (0,1/5)$. We claim that
\begin{align*}
 \int_{\R^N\sm B_{2|w|}} \left|\frac{K(z+\frac{w}{2},\xi+\frac{w}{2})}{|\xi+\frac{w}{2}|^N}-\frac{K(z-\frac{w}{2},\xi-\frac{w}{2})}{|\xi-\frac{w}{2}|^N}\right|\ell(|\xi|)d\xi \leq \L\ell(|w|).    
\end{align*}
Notice that the integrand over $B_{1+|w|/2}\sm B_{1-|w|/2}$ is bounded by a constant and the region has measure proportional to $|w|\leq \ell(|w|)$. So we focus on the region $B_{1-|w|/2}\sm B_{2|w|},$ where the kernel is of class $C^1$.

By the fundamental theorem of calculus and the hypothesis on the derivative of $K$,
\begin{align*}
&\left|\frac{K(z+\frac{w}{2},\xi+\frac{w}{2})}{|\xi+\frac{w}{2}|^N}-\frac{K(z-\frac{w}{2},\xi-\frac{w}{2})}{|\xi-\frac{w}{2}|^N}\right| \leq \int_{-1/2}^{1/2} \left|\frac{d}{dt}\1\frac{K(z+tw,\xi+tw)}{|\xi+tw|^N}\2\right|dt\\
&\leq \int_{-1/2}^{1/2} \frac{|D_xK(z+tw,\xi+tw) + D_yK(z+tw,\xi+tw)||w|}{|\xi+tw|^N}dt+N\int_{-1/2}^{1/2} \frac{|K(z+tw,\xi+tw)||w|}{|\xi+tw|^{N+1}}dt,\\
&\leq C|w||\xi|^{-(N+1)}.
\end{align*}
In the last inequality we have assumed that $\xi \in \R^N\sm  B_{2|w|}$ and $- t w \in B_{|w|}$ for $t\in(-1/2,1/2)$, such that $|\xi+tw|$ is comparable to $|\xi|$.

We bound in this way
\begin{align*}
\int_{B_{1-|w|/2} \sm B_{2|w|}} \left|\frac{K(z+\frac{w}{2},\xi+\frac{w}{2})}{|\xi+\frac{w}{2}|^N}-\frac{K(z-\frac{w}{2},\xi-\frac{w}{2})}{|\xi-\frac{w}{2}|^N}\right|\ell(|\xi|)d\xi &\leq C|w|\int_{B_{1-|w|/2} \sm B_{2|w|}} \frac{\ell(|\xi|)}{|\xi|^{N+1}}d\xi\\
&\leq C|w|\int_{2|w|}^\8 \frac{\ell(\r)}{\r^2}d\r.
\end{align*}
Finally, we use the semi-homogeneity of $\ell$ (Lemma \ref{prop1}) in the form $(|w|/\r)^{1/2} \leq \ell(|w|/\r) \leq c\ell(|w|)/\ell(\r)$ in order to complete the bound above with
\begin{align*}
C|w|\int_{2|w|}^\8 \frac{\ell(\r)}{\r^2}d\r \leq C|w|^{1/2}\ell(|w|)\int_{2|w|}^\8 \frac{d\r}{\r^{3/2}}d\r \leq C\ell(|w|),
\end{align*}
which concludes the lemma.
\end{proof}

\section{A priori estimates}\label{sec4}

In this section we show interior and boundary estimates. First, we show some maximum principles, then we revisit the interior $\cL^\a$-regularity from \cite{MR3626549}. By a bootstrapping argument we then prove $\cL^{1+\a}$ interior estimates which is above the regularity needed for classical solutions. Finally, we use the method of barriers to obtain estimates at the boundary of domains with the exterior sphere condition.

\subsection{Maximum principles}\label{mp:sec}

\begin{lemma}(Homogeneous maximum principle)\label{lem:mp} Let $\Omega\ss \R^N$ be an open bounded set, $K\in L^\infty(\Omega\times B_1)$, $L_K$ satisfy the uniform ellipticity condition \ref{itm:uellip} with parameters $0<\l\leq\L$, let $u\in L^1(B_1(\W)) \cap C^{Dini}_{loc}(\W) \cap C(\overline\W)$. If $u$ satisfies that
\[
L_Ku\leq 0 \text{ in }\W,\qquad 
u\leq 0 \text{ on } \R^N\sm\W,
\]
then $u\leq 0$ in $\W.$
\end{lemma}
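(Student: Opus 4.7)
My approach would be a proof by contradiction: assume $M := \sup_\Omega u > 0$. Continuity of $u$ on the compact set $\overline\Omega$ together with the hypothesis $u\leq 0$ on $\R^N\sm\Omega$ (which includes $\partial\Omega$) guarantees that $M$ is attained at some $x_0 \in \Omega$ and that $u \leq M$ globally on $\R^N$. Evaluating $L_K$ at this interior maximum, uniform ellipticity $K \geq \lambda > 0$ makes the integrand pointwise nonnegative, so the hypothesis $L_K u(x_0) \leq 0$ forces
\begin{equation*}
0 = L_K u(x_0) = \int_{B_1(x_0)} \frac{u(x_0)-u(y)}{|y-x_0|^N} K(x_0,y-x_0)\,dy,
\end{equation*}
and hence $u(y) = M$ for a.e. $y \in B_1(x_0)$. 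Since $u$ is continuous on the open set $\Omega$, the set $\{y \in B_1(x_0) \cap \Omega : u(y) < M\}$ is open in $\R^N$ and has Lebesgue measure zero, so it is empty; this upgrades the a.e. statement to $u \equiv M$ on $B_1(x_0) \cap \Omega$.

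The next step is a strong-maximum-principle-style propagation. Let $\Omega_0$ be the connected component of $\Omega$ containing $x_0$ and set $E := \{x \in \Omega_0 : u(x) = M\}$. The previous step shows that $E$ is open in $\Omega_0$, continuity shows it is closed, and $x_0 \in E$ together with connectedness of $\Omega_0$ yields $E = \Omega_0$. Because $\Omega_0$ is a connected component of the open set $\Omega$, any boundary point of $\Omega_0$ that lay in $\Omega$ would have a small ball in $\Omega$ intersecting $\Omega_0$ and hence contained in $\Omega_0$ (contradicting being on $\partial\Omega_0$); therefore $\partial\Omega_0 \subseteq \partial\Omega \subseteq \R^N \sm \Omega$, and this boundary is nonempty because $\Omega_0$ is bounded and open. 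Picking $y\in\partial\Omega_0$ and approaching it by a sequence in $\Omega_0$, continuity of $u$ on $\overline\Omega$ gives $u(y) = M > 0$, contradicting the hypothesis $u(y) \leq 0$.

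The main obstacle is the finite range of interaction of $L_K$: a single evaluation at the maximum point $x_0$ cannot by itself compare the interior value against the exterior data whenever $\dist(x_0,\partial\Omega) > 1$, so one cannot read off a contradiction straight from $L_K u(x_0) \leq 0$ as one would for, say, a fractional Laplacian. The absence of good scaling properties (Section \ref{sec:1}) blocks the shortcut of rescaling $\Omega$ to small diameter, and the propagation through the connected component of $x_0$ is precisely what carries the interior value $M$ out to $\partial\Omega$, where the boundary hypothesis can finally be used.
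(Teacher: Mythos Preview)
Your proof is correct. It differs from the paper's in how the contradiction is extracted. The paper selects $x_0 \in \{u=M\}$ so that the annulus $(B_1(x_0)\setminus B_{1/2}(x_0))\setminus\{u=M\}$ already has positive measure, builds a test function $\varphi$ equal to $M$ on $B_{1/2}(x_0)$ and to $u$ outside, and invokes the touching-from-above comparison (Lemma~\ref{prop:elip}) to obtain $0\geq L_K u(x_0)\geq L_K\varphi(x_0)>0$, an immediate contradiction. You instead evaluate $L_K u$ directly at an arbitrary maximizer, deduce $u\equiv M$ on $B_1(x_0)\cap\Omega$, and then propagate by a strong-maximum-principle connectedness argument until the constant value $M$ reaches $\partial\Omega$. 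Your route is a bit longer but fully self-contained (it does not call on Lemma~\ref{prop:elip}) and makes explicit how the finite range of interaction is overcome; the paper's clever choice of $x_0$ short-circuits the propagation step but leaves the existence of such an $x_0$ to the reader.
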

\begin{proof}
Assume, by contradiction, that $M:= \sup_\W u >0$. By the hypotheses on $\W$ and $u$, there is $x_0 \in \{u=M\} \ss\W$ such that $(B_1(x_0) \sm B_{1/2}(x_0)) \sm \{u=M\}$ has positive measure. 

Consider $\varphi \in L^1(\R^N)\cap \cL^{1+\a}(B_{1/2}(x_0))$ such that
\[
\varphi(x)= 
\begin{cases}
M &\text{ if } x\in B_{1/2}(x_0),\\
u(x) &\text{ if } x\in \R^N\sm B_{1/2}(x_0).
\end{cases}
\]
This function $\varphi$ touches $u$ from above at $x_0$ and we reach a contradiction due to Lemma \ref{prop:elip}, because
\[
0\geq  L_Ku(x_0) \geq L_K \varphi(x_0) \geq \l \int_{B_1(x_0)\sm B_{1/2}(x_0)} \frac{M-u(y)}{|y-x_0|^N}dy > 0.
\]
\end{proof}

\begin{lemma}[Non-homogeneous maximum principle]\label{mp}
Let $\W\ss \R^N$ a bounded open set, $K\in L^\infty(\W \times B_1)$, $L_K$ satisfy the uniform ellipticity condition \ref{itm:uellip} with parameters $0<\l\leq \L$, and let $u \in L^1(B_1(\W))\cap C^{Dini}_{loc}(B_r)$. Then,
\[
\sup_{\W} u \leq \sup_{B_1(\W)\sm \W} u + C\|(L_K u)_+\|_{L^\8(\W)},
\]
for some $C=C(\l,\L,\diam(\W))>0$.
\end{lemma}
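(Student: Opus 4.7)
The plan is to deduce the non-homogeneous estimate from the homogeneous maximum principle (Lemma \ref{lem:mp}) via a classical barrier argument. Set $M' := \sup_{B_1(\W)\sm\W} u$ and $F := \|(L_K u)_+\|_{L^\8(\W)}$; if $\sup_\W u \leq M'$ the conclusion is immediate, so assume otherwise. The strategy is to produce a nonnegative function $\varphi:\R^N\to\R$, regular enough for pointwise evaluation of $L_K\varphi$ on $\W$, satisfying $\varphi\geq 0$ on $B_1(\W)$, $L_K\varphi \geq 1$ on $\W$, and $C_0:=\sup_\W\varphi$ bounded in terms of $\l$, $\L$, and $\diam(\W)$ only.

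Given such a $\varphi$, the comparison function $v:=u-M'-F\varphi$ satisfies $L_Kv = L_Ku - F\,L_K\varphi\leq (L_Ku)_+ - F \leq 0$ in $\W$, and $v\leq u-M'\leq 0$ on $B_1(\W)\sm\W$ (using $\varphi\geq 0$). Lemma \ref{lem:mp} then yields $v\leq 0$ in $\W$, whence $\sup_\W u \leq M' + C_0 F$, the desired estimate with $C=C_0$.

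For the construction of $\varphi$, after translation assume $\W\ss B_R$ with $R\leq\diam(\W)$ and try the quadratic $\varphi(x):=(R+1)^2-|x|^2$, which is nonnegative on $B_{R+1}\supset B_1(\W)$ and bounded by $(R+1)^2$ on $\W$. Substituting $\xi=y-x$ gives
\begin{align*}
L_K\varphi(x) = \int_{B_1}\frac{|\xi|^2+2\xi\cdot x}{|\xi|^N}K(x,\xi)\,d\xi.
\end{align*}
Splitting $K=K^s+K^a$ into its symmetric and antisymmetric parts in $\xi$ and using parity, the symmetric part isolates a ``diffusion'' contribution $\int_{B_1}|\xi|^{2-N}K^s(x,\xi)\,d\xi \geq \l\omega_N/2$, while the antisymmetric part gives a ``drift'' term $2x\cdot\int_{B_1}\xi|\xi|^{-N}K^a(x,\xi)\,d\xi$ of absolute value at most $(\L-\l)|x|\omega_N$ (since $|K^a|\leq (\L-\l)/2$ and $\int_{B_1}|\xi|^{1-N}d\xi=\omega_N$). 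Hence $L_K\varphi(x)\geq \l\omega_N/2 - (\L-\l)|x|\omega_N$, which is uniformly positive on a ball about the center of size comparable to $\l/(\L-\l)$; after multiplying $\varphi$ by a constant, one obtains $L_K\varphi\geq 1$ there.

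The main obstacle I anticipate is that when $\diam(\W)$ is large relative to $\l/(\L-\l)$, the drift term spoils positivity of the naive quadratic barrier. My plan to bridge this gap is to cover $\W$ by finitely many balls of radius $\sim \l/(\L-\l)$, apply the local barrier/MP estimate in each, and chain the resulting bounds through successive overlapping shells until reaching $B_1(\W)\sm\W$; the number of iterations and the accumulated constants depend only on $N$, $\l$, $\L$, and $\diam(\W)$, yielding the advertised form of $C$. Throughout, the Dini continuity hypothesis on $u$ (and smoothness of the polynomial $\varphi$) ensures that every pointwise evaluation of $L_K$ in the argument is absolutely convergent and thus legitimate.
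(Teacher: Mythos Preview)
Your reduction to the homogeneous maximum principle via a barrier is exactly the right framework, and your computation for the quadratic barrier $\varphi(x)=(R+1)^2-|x|^2$ is correct: the symmetric part of $K$ produces a diffusion term $\geq \l\w_N/2$, while the antisymmetric part gives a drift of size at most $(\L-\l)|x|\w_N$. You also correctly diagnose that the quadratic barrier fails once $|x|\gtrsim \l/(\L-\l)$.

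The gap is the chaining step. Applying your local barrier on a small ball $B_{r_0}(x_0)\cap\W$ requires exterior data on $B_1(B_{r_0}(x_0))\sm B_{r_0}(x_0)$, an annulus of \emph{outer radius} $1+r_0$. When $x_0$ lies deep inside $\W$, this annulus is entirely contained in $\W$, so the ``exterior'' supremum in the local estimate is just $\sup_\W u$ again and you gain nothing. Iterating from a near-maximum point produces a sequence $x_0,x_1,\ldots$ with $u(x_k)\geq M-kC_0F$, but there is no mechanism forcing $x_k$ to move toward $\p\W$; the annulus allows $x_{k+1}$ to land anywhere in $B_{1+r_0}(x_k)\sm B_{r_0}(x_k)$. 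Layer-by-distance arguments have the same defect: controlling $u$ on a shell at distance $\sim jr_0$ from $\p\W$ requires, via the range-one operator, data on shells up to distance $\sim jr_0+1$ that are not yet controlled. In short, the barrier grants control at scale $r_0$ but demands control at scale $1$, so the chain does not close.

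The paper circumvents this by choosing a barrier whose diffusion term can be made to dominate the drift \emph{globally}: take $\varphi(x)=e^{-\a x_N}$ and compute
\[
-e^{\a x_N}L_K\varphi(x)=\int_0^1\big[\l(e^{\a y}-1)-\L(1-e^{-\a y})\big]h(y)\,dy \geq c_1\a^2 - c_2(\L-\l)\a,
\]
so for $\a$ large (depending only on $\l,\L,N$) one has $L_K\varphi\leq -c_0e^{-\a R}$ uniformly on $\W\ss\{|x_N|\leq R\}$, regardless of $\diam(\W)$. The comparison function $w=v-(e^{\a R}-\varphi)/(c_0e^{-\a R})$ then feeds directly into Lemma~\ref{lem:mp} with no iteration. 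The key algebraic point your quadratic cannot replicate is the $\a^2$ versus $\a$ competition coming from the convexity of the exponential; replacing your polynomial barrier by $e^{-\a x_N}$ (and then subtracting an appropriate constant to make it nonnegative on $B_1(\W)$) fixes the argument in one stroke.
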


\begin{proof}
The case $\|(L_K u)_+\|_{L^\8(\W)}=0$ follows from Lemma \ref{lem:mp}. Assume that $\|(L_K u)_+\|_{L^\8(\Omega)}>0$ and consider
\begin{align*}
v:=\frac{u-\sup_{B_1(\W)\sm \W}u}{\|(L_K u)_+\|_{L^\8(\W)}}.
\end{align*}
We know that $L_K v \leq 1$ in $\W$ and $v\leq 0$ on $B_1(\W)\sm \W$.

Let $\a\geq 0$ to be fixed sufficiently large and $\varphi(x) := e^{-\a x_N}$. By the monotonicity of $\varphi$,
\begin{align*}
L_K\varphi(x) &\leq \int_{B_1\cap \{y_N<0\}} \frac{\varphi(x)-\varphi(x+y)}{|y|^N}\l dy + \int_{B_1\cap \{y_N\geq 0\}} \frac{\varphi(x)-\varphi(x+y)}{|y|^N}\L dy,\\
&= -e^{-\a x_N} \int_0^1 [\l(e^{\a y_N}-1)-\L(1-e^{-\a y_N})]h(y_N)dy_N,
\end{align*}
where, for $y>0$,
\[
h(y) := \int_{B_{(1-y^2)^{1/2}}^{N-1}} \frac{dz}{(|z|^2+y^2)^{N/2}} = \w_{N-1}y^{-1}\int_0^{(y^{-2}-1)^{1/2}} \frac{\r^{N-2}d\r}{(\r^2+1)^{N/2}} = O(y^{-1}) \text{ as $y\to0^+$}.
\]

By the convexity of the exponential, we have that, for $y\in(0,1)$,
\[
e^{\a y}-1 = \a\int_0^y e^{\a t}dt \geq  \a\int_0^y (1+\a t)dt = \a y + \tfrac{\a^2}{2} y^2
\]
and also $e^{-\a y}-1\geq -\a y$. Hence,
\begin{align*}
-e^{\a x_N}L_K\varphi(x) 
&\geq \int_0^1 [\l( \a y + \tfrac{\a^2}{2} y^2)+\L( -\a y)]h(y)dy\\
&= \1\tfrac{\l}{2}\int_0^1y^2h(y)dy\2\a^2 - \1(\L-\l)\int_0^1yh(y)dy\2\a.    
\end{align*}
So the right-hand side is positive for $\a$ sufficiently large. Once fixed $\a$, there is $c_0=c_0(\l,\L,N)>0$ such that $L_K\varphi \leq -c_0 e^{-\a R}$ in $\W$ provided that $\W \ss \{x_N\in[-R,R]\}$ for some $R>0$.

Finally, take $R$ even larger such that $B_1(\W) \ss \{x_N\in[-R,R]\}$ and consider
\[
w := v-\frac{e^{\a R}-\varphi}{c_0e^{-\a R}}.
\]
Then $w$ satisfies the hypotheses of the homogeneous maximum principle, from where we get $w\leq 0$ in $\W$ and therefore
\[
\frac{\sup_{\W} u - \sup_{B_1(\W)\sm \W} u}{\|(L_K u)_+\|_{L^\8(\W)}} \leq \frac{e^{2\a R}}{c_0}.
\]
\end{proof}

\begin{remark}\label{rmk:mp}
The exponential dependence of the constant in terms of the diameter of $\W$ is not optimal in the previous result. For $\W \ss B_R$ with $R\in(0,1/2)$, we have that $\varphi = \chi_{B_R}$ is an upper barrier with $L_K\varphi \geq c|\ln R|$ in $B_R$. This means that for such domains the constant can be improved to be of order $\ell(R)$. We expect this to be the asymptotic behavior as $R\to0$ of $\max_{B_R} u$, where $u$ is the torsion function, i.e. the solution of
\[
\begin{cases}
L_K u = 1 \text{ in } B_R,\\
u = 0 \text{ on } B_1\sm B_R.
\end{cases}
\]
On the other hand, we do not have a good guess of what should be the asymptotic behavior of $\max_{B_R} u$ as $R\to\8$.
\end{remark}

\begin{remark}\label{othermp}
The previous maximum principle (Lemma \ref{mp}) also applies whenever $K \in L^\8(\W\times \R^N)$ is elliptic in the sense that $K\geq0$ in $\W\times \R^N$ and uniformly elliptic in the sense of \ref{itm:uellip}, a condition imposed on $K(x,y)$ with $y \in B_1$. The proofs are the same. We use this fact in the proof of Theorem \ref{thm:loglap} to apply an approximation argument which allows us to use the results in \cite{feulefack2021nonlocal}.
\end{remark}

\begin{remark}
Other maximum principles for operators with zero order kernels can be found in \cite{MR3995092,feulefack2021nonlocal,FKT20}.  In particular, in \cite{FKT20} the classification of nonnegative finite energy solutions of a nonlinear problem involving an operator on the sphere with a zero order kernel is studied. Due to the lack of a regularity theory for those solutions, the authors work in a weak setting arguing via symmetry and conformal invariance. In particular, in \cite[Section 3]{FKT20}, the authors show some maximum principles for antisymmetric functions, including a small volume maximum principle and a strong maximum principle.
\end{remark}

\subsection{Lower order estimates}\label{km}

In this section we show the following.
\begin{theorem}\label{thm1}
Let $\W\ss\R^N$ be an open set with bounded inner radius $R:=\sup_{x\in\W}\dist(x,\p\W)<\8$, $K\in L^\infty(\Omega\times B_1)$, $L_K$ satisfy the uniform ellipticity condition \ref{itm:uellip} with parameters $0<\l\leq \L$, and let $u \in L^\8(B_1(\W))\cap C^{Dini}_{loc}(\W)$ be such that $\|L_Ku\|^{(1)}_{\mathcal L^0(\W)}<\8$.  Then there is $\a=\a(\l,\L,N)\in(0,1)$ such that $u\in \cL^\a_{loc}(\W)$ and
\[
\|u\|^{(0)}_{\mathcal L^\a(\W)} \leq C\left(\|u\|_{L^\infty(B_1(\W))}+\|L_Ku\|^{(1)}_{\mathcal L^0(\W)}\right)
\]
for some $C=C(\l,\L,N,R)>0$.
\end{theorem}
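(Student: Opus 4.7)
The plan is to carry out a Krylov--Safonov diminish-of-oscillation scheme adapted to the zero-order setting, in the spirit of Kassmann--Mimica~\cite{MR3626549}. Thanks to Lemma~\ref{prop2}, it is enough to produce, for every $x_0\in\Omega$ with $d(x_0)\geq\rho_0$ and every $r\in(0,\rho_0)$, a pointwise oscillation bound
\[
\osc_{B_r(x_0)} u \,\leq\, C\,\ell^\alpha(r)\bigl(\|u\|_{L^\infty(B_1(\Omega))}+\|L_Ku\|^{(1)}_{\mathcal L^0(\Omega)}\bigr),
\]
and to then invoke Lemma~\ref{prop2} to convert this into the weighted semi-norm estimate in the statement. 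Balls of radius $r\in[\rho_0,R]$ are absorbed into the constant since there are only finitely many such ``interior scales'' for each $x_0$.

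The core step is a single-scale growth lemma: there exist $\mu,\theta\in(0,1)$ and $r_0\in(0,\rho_0)$, depending only on $N,\lambda,\Lambda$, such that for every ball $B_r$ with $r\in(0,r_0)$ and every $v\in L^\infty(\R^N)\cap C^{Dini}_{loc}(B_r)$ satisfying $v\leq 1$ in $\R^N$, $|\{v\leq 0\}\cap B_{r/2}|\geq\mu|B_{r/2}|$, and $\|(L_Kv)_+\|_{L^\infty(B_r)}\leq\theta\,\ell(r)^{-1}$, one has $v\leq 1-\theta$ on $B_{r/4}$. I would prove this by constructing a radial barrier $w\leq v$ supported in $B_{r/2}$ for which uniform ellipticity \ref{itm:uellip} together with the density of $\{v\leq 0\}$ yield
\[
L_Kw(x)\,\geq\,\lambda\!\!\int_{\{v\leq 0\}\cap B_{r/2}}\!\!\frac{w(x)}{|y-x|^N}\,dy\,\geq\,c\lambda\mu\,\ell(r)^{-1}\qquad\text{for }x\in B_{r/4},
\]
and then applying Lemma~\ref{prop:elip} together with the non-homogeneous maximum principle (Lemma~\ref{mp}). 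The matching of orders on both sides explains why the tolerance on the right-hand side is $\theta\,\ell(r)^{-1}$, which in turn is the source of the weight $\ell(d(x))$ appearing in the definition of $\|\cdot\|^{(1)}_{\mathcal L^0(\Omega)}$.

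Given the growth lemma, the iteration is routine: at geometric scales $r_k=(1/2)^k r_0$ one applies it to either $(u-m_k)/\osc_{B_{r_{k-1}}(x_0)}u$ or its reflection (whichever lies below the median on at least half of $B_{r_{k-1}/2}(x_0)$), obtaining
\[
\osc_{B_{r_k}(x_0)} u \,\leq\, (1-\theta)\,\osc_{B_{r_{k-1}}(x_0)} u \,+\, C\,\ell(r_k)^{-1}\|L_Ku\|^{(1)}_{\mathcal L^0(\Omega)}.
\]
The exponent $\alpha=\alpha(N,\lambda,\Lambda)>0$ is then chosen so that $(1-\theta)\leq(\ell(r_k)/\ell(r_{k-1}))^\alpha$, which is feasible by the semi-homogeneity of $\ell$ (Lemma~\ref{prop1}). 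A telescoping summation together with the concavity and monotonicity of $\ell^\alpha$ yields the claimed interior bound for all $r\in(0,r_0)$, and the application of Lemma~\ref{prop2} closes the argument.

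The main obstacle will be establishing the growth lemma with constants \emph{uniform in $r$}: since $L_K$ admits no scaling symmetry (see Section~\ref{sec:1}), one cannot reduce to $r=1$, and the barrier must be computed and estimated directly at every scale. The precise matching between the barrier gain $c\lambda\mu\,\ell(r)^{-1}$ on $B_{r/4}$ and the admissible right-hand side $\theta\,\ell(r)^{-1}$ is what forces the $\ell^{-1}$ weight in $\|\cdot\|^{(1)}_{\mathcal L^0(\Omega)}$; dually, the slow decay of $\ell$ compared to any power $r^\beta$ is exactly what allows the iteration to close and produces a log-Hölder -- rather than Hölder -- modulus at the end.
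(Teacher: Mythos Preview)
Your outline captures the right architecture (growth lemma $\Rightarrow$ iterated oscillation decay $\Rightarrow$ Lemma~\ref{prop2}), but there is a genuine gap in the iteration: you do not control the tails. Your single-scale lemma assumes $v\leq 1$ in all of $\R^N$, and this is the only step at which the nonlocality is felt. At the first step you can normalize so that this holds, but at step $k$ you set $v=(u-m_k)/\osc_{B_{r_{k-1}}(x_0)}u$, and on the annulus $B_1(x)\sm B_{r_{k-1}}(x_0)$ (which $L_K$ still sees) you only know $|v|\leq \|u\|_{L^\infty}/\osc_{B_{r_{k-1}}}u$, a quantity that blows up as the oscillation shrinks. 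If you try to dump this into the right-hand side, the tail contributes a term of order $\|u\|_{L^\infty}\,\ell(r_{k-1})^{-1}$ to $L_K v$, which after dividing by the oscillation is far larger than the $\theta\,\ell(r_k)^{-1}$ tolerance your lemma allows. In short, the recurrence you wrote down is not what the growth lemma actually produces once tails are accounted for, and the iteration does not close as stated.

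The paper fixes this in two coupled ways. First, the growth lemma (Lemma~\ref{meas_est}) is stated not under $u\leq 1$ but under the weaker tail hypothesis $u\geq -t_\a$ with $t_\a(x)=(\ell^\a(|x|)-\ell^\a(\r))_+$; the barrier then carries an explicit $-t_\a$ piece, and Lemma~\ref{lem:tail} shows its cost is $O(\a\,\ell^{\a-1}(\r))$, which for $\a$ small is beaten by the gain $c\,\ell^{\a-1}(\r)$ coming from the good-set term. Second, the iteration (Corollary~\ref{cor:mod_cont}) carries as inductive hypothesis the oscillation bound $\osc_{B_\r}v\leq \ell^\a(\r)$ at \emph{all} larger scales simultaneously, which is exactly what feeds the tail hypothesis at the next step. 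Relatedly, the paper iterates along $\r\mapsto \r^2$ rather than $r\mapsto r/2$; this is the scaling adapted to $\ell$ (since $\ell(\r^2)=\ell(\r)/2$ for small $\r$) and is what makes the gain, the tail, and the right-hand side tolerance all land at the common order $\ell^{\a-1}$. Your dyadic scheme does not share this balance, and even with a tail term added you would have to redo the matching of orders from scratch.
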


This result is essentially due to \cite{MR3626549}.  The proof we present below is closely related to \cite{MR3626549} but we have some slightly different assumptions on the operator $L_K$; for instance, we do not assume that the kernel $K(x,y)$ is even in $y$. Finally notice that the regularity estimate on the solution is independent of the regularity of the kernel $K=K(x,y)$, which is merely assumed to be uniformly bounded from above and away form zero.

The proof of Theorem \ref{thm1} relies on the following diminish of oscillation lemma.

\begin{lemma}\label{meas_est}
There exist $r_0,\a,\d\in(0,1)$ depending on $0<\l\leq \L$ and $N$ such that the following holds: Let $\r\in(0,r_0)$, 
let $K\in L^\infty(B_r\times B_1)$, $L_K$ satisfy the uniform ellipticity condition \ref{itm:uellip} with parameters $0<\l\leq \L$, and let $u \in L^1(B_{1+2\r^2})\cap C^{Dini}_{loc}(B_{2\r^2})$ be such that
\begin{align*}
&L_K u \geq -\d\ell^{\a-1}(\r^2) \text{ in } B_{2\r^2},\\
&u(x) \geq -t_\a(x) := -(\ell^\a(|x|)-\ell^\a(\r))_+ \text{ for all } x \in B_{1+2\r^2},\\
&\mu(\{u\geq \ell^\a(\r)/2\}\cap B_{\r} \sm B_{3\r^2}) \geq \tfrac{1}{2}\mu\1B_{\r} \sm B_{3\r^2}\2, \qquad\text{where }\m(dx) :=\ell(|x|) |x|^{-N}dx.
\end{align*}
Then
\[
\inf_{B_{\r^2}} u \geq \ell^\a(\r)-\ell^\a(\r^2).
\]
\end{lemma}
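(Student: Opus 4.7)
The plan is to argue by contradiction via an explicit lower barrier whose $L_K$-value is sufficiently negative to conflict with the assumed lower bound $L_K u\geq -\d\ell^{\a-1}(\r^2)$. The approach follows the diminish-of-oscillation strategy of Kassmann and Mimica \cite{MR3626549} tailored to the logarithmic scale, where the natural weight $\m(dx)=\ell(|x|)|x|^{-N}dx$ plays the role that the Lebesgue measure plays in the fractional-Laplacian setting.

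Suppose by contradiction that $u(x_0)<\ell^\a(\r)-\ell^\a(\r^2)$ at some $x_0\in\overline{B_{\r^2}}$. Since $\ell(\r^2)=\ell(\r)/2$ for $\r<\r_0$, this shortfall equals $\ell^\a(\r)(1-2^{-\a})$, which is strictly smaller than $\ell^\a(\r)/2$ whenever $\a<1$. Write $S:=\{u\geq \ell^\a(\r)/2\}\cap (B_\r\sm B_{3\r^2})$; a direct computation gives $\m(B_\r\sm B_{3\r^2})\to \omega_N\ln 2$ as $\r\to 0^+$, so the measure hypothesis yields $\m(S)\geq c_0>0$ uniformly for $r_0$ small. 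The candidate barrier is
\[
\psi(x):=a\chi_{B_{2\r^2}}(x)+\tfrac{1}{2}\ell^\a(\r)\chi_S(x)-t_\a(x), \qquad a:=\ell^\a(\r)-\ell^\a(\r^2),
\]
understood after a standard mollification of the indicators. By the measure estimate and the hypothesis $u\geq -t_\a$, a case analysis on $|y|\lessgtr \r$ and $y\in S$ vs.\ $y\notin S$ shows that $\psi\leq u$ on $\R^N\sm B_{2\r^2}$, while $\psi(x_0)=a>u(x_0)$ by the contradiction assumption.

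The core of the argument is an upper estimate for $L_K\psi(x)$ with $x\in B_{2\r^2}$. The $S$-term provides a decisive negative kick: using $|x-y|\leq 2|y|$ for $y\in S$ and $\m(S)\leq \ell(\r)\int_S |y|^{-N}dy$,
\[
-\int_S\frac{\ell^\a(\r)/2}{|x-y|^N}K(x,y-x)\,dy\leq -\frac{\l\,\m(S)\,\ell^\a(\r)}{2^{N+1}\ell(\r)}\leq -c_1\l\,\ell^{\a-1}(\r).
\]
The adverse (positive) contribution comes from the bump $a$ on $(B_\r\sm B_{2\r^2})\sm S$ and $B_1(x)\sm B_\r$, together with the tail $t_\a$ on $B_1(x)\sm B_\r$; an explicit integration (substituting $s=|\ln r|$ and using $\int_\r^1 1/r\,dr\sim |\ln\r|$) gives a bound $C\L[a|\ln\r|+\a(1-\a)^{-1}\ell^{\a-1}(\r)]\leq C'\L\,\a\,\ell^{\a-1}(\r)$ for small $\a$, using $a\leq (\ln 2)\a\,\ell^\a(\r)$. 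Choosing $\a$ small enough that this is dominated by half of the $S$-kick yields
\[
L_K\psi(x)\leq -c_2\ell^{\a-1}(\r) \qquad\text{uniformly in }x\in B_{2\r^2},
\]
for some $c_2=c_2(\l,\L,N)>0$.

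Finally, taking $\d$ so small that $-c_2+2^{1-\a}\d\leq 0$, the difference $\psi-u$ satisfies $L_K(\psi-u)\leq 0$ in $B_{2\r^2}$ and $\psi-u\leq 0$ on $\R^N\sm B_{2\r^2}$, so Lemma \ref{lem:mp} forces $\psi\leq u$ on $B_{2\r^2}$, contradicting $\psi(x_0)>u(x_0)$. The hard part is the rigorous handling of the discontinuous barrier: the indicator $\chi_S$ (with $S$ merely measurable) and the jump at $\partial B_{2\r^2}$ require a mollification that preserves all the sharp estimates on $L_K\psi$, which is technically delicate but routine. A secondary subtlety is the calibration of $(\a,\d,r_0)$: $\a$ must be small so the adverse contributions are beaten by the $S$-kick, $\d$ small so the equation's right-hand side does not cancel the kick, and $r_0$ small so that the asymptotic identities $\ell(\r^2)=\ell(\r)/2$ and $\m(B_\r\sm B_{3\r^2})\geq c_0$ remain in effect.
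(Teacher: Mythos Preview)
Your barrier is the paper's barrier: the three pieces (bump on $B_{2\r^2}$, gaining term $\tfrac{1}{2}\ell^\a(\r)\chi_S$, and tail $-t_\a$) match the paper's $\varphi$ exactly, and your three estimates correspond to Lemmas~\ref{lem_bump}, \ref{lem:gain}, and \ref{lem:tail}. Two points of cleanup. First, the contradiction is superfluous: once $L_K\psi\leq -\d\ell^{\a-1}(\r^2)$ in $B_{2\r^2}$ and $\psi\leq u$ on the complement, Lemma~\ref{lem:mp} gives $u\geq\psi$ directly. Second, the smooth bump is not a cosmetic detail to be deferred: with the raw indicator $a\chi_{B_{2\r^2}}$, the quantity $L_K(a\chi_{B_{2\r^2}})(x)$ diverges to $+\infty$ as $x\to\partial B_{2\r^2}$, so your uniform bound $C\L a|\ln\r|$ is false as written. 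The paper takes a smooth bump $\b_{2\r^2}$ (equal to $1$ on $B_{\r^2}$, supported in $B_{2\r^2}$) from the outset, and Lemma~\ref{lem_bump} gives precisely the uniform bound $L_K\b_{2\r^2}\leq C\L\ell^{-1}(2\r^2)$ on $B_{2\r^2}$; this is what your mollification has to deliver, and it is the \emph{only} place smoothing is needed --- the indicator $\chi_S$ requires none, since $S\subset B_\r\sm B_{3\r^2}$ is disjoint from $\overline{B_{2\r^2}}$ and hence $\chi_S\equiv 0$ near any evaluation point $x\in B_{2\r^2}$.
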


In the remainder of the section we assume that $K\in L^\infty(B_r\times B_1)$ and that $L_K$ satisfies the uniform ellipticity condition \ref{itm:uellip} with parameters $0<\l\leq \L$.

Let $\b \in C^\8_0(B_1)$ be a fixed bump function taking values in $[0,1]$ and being identically equal to one in $B_{1/2}$. Moreover, let
\begin{align}\label{betafun}
\b_r(x) := \b(r^{-1}x) \qquad \text{ for }r>0 \text{ and }x\in \R^N.
\end{align}
The idea of the proof of Lemma \ref{meas_est} is to fit the following barrier below the super-solution
\begin{align*}
\varphi := \underbrace{(\ell^\a(\r)-\ell^\a(\r^2))\b_{2\r^2}}_{\text{Bump function}} + \underbrace{\tfrac{\ell^\a(\r)}{2}\chi_{\{u\geq \ell^\a(\r)/2\}\cap B_{\r} \sm B_{3\r^2}}}_{\text{Gaining term}} - \underbrace{t_\a}_{\text{Tail control}}.
\end{align*}

The first term is a \emph{bump function} supported in $B_{2\r^2}$ which provides the desired improvement from below in $B_{\r^2}$. The second term is the one that uses the measure hypothesis in the lemma and gives us then some room to fit the estimates for the barrier, because of this favorable behavior we call it the \emph{gaining or good term}. The last term is common in the regularity theory for non-local operators and it reflects the fact that in the standard iterative arguments one should always keep control of the \emph{tails} of the solutions.

Notice that $\varphi$ is smooth in $B_{2\r^2}$ where we can compute $L_K$. We split in this way the computation of $L_K\varphi$ in three terms for which we give the details in the following lemmas.

\begin{lemma}\label{lem_bump}
Let $\b$ and $\b_r$ as in \eqref{betafun}. Then, there are $r_0\in(0,1)$ and $C>0$ depending on $N$ and $\b$ such that
\[
L_K\b_r \leq C\L\ell^{-1}(r)\text{ in }B_{r}\qquad \text{ for all }r\in(0,r_0).
\]
\end{lemma}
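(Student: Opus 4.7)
The plan is to bound $|L_K\b_r(x)|$ uniformly for $x\in B_r$---which a fortiori yields the desired upper bound $L_K\b_r\leq C\L\ell^{-1}(r)$---by splitting the integral defining $L_K\b_r(x)$ at the intermediate scale $|y-x|=r$ and treating the near and far parts separately.

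First I would handle the far-field contribution over $B_1(x)\sm B_r(x)$ with the crude bound $|\b_r(x)-\b_r(y)|\leq 2$ together with $0\leq K\leq \L$, producing
\[
\left|\int_{B_1(x)\sm B_r(x)}\frac{\b_r(x)-\b_r(y)}{|y-x|^N}K(x,y-x)\,dy\right|\leq 2\L\w_N\int_r^1\frac{d\r}{\r}=2\L\w_N|\ln r|,
\]
which for $r<\r_0=0.1$ equals $2\L\w_N\ell^{-1}(r)$. This already produces the logarithmic growth claimed in the lemma.

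Next, for the near-field piece over $B_r(x)$, I would exploit the smoothness of $\b_r$ via a second-order Taylor expansion of $\b_r$ at $x$. Using the scaling bounds $\|D^k\b_r\|_{L^\8}\leq r^{-k}\|D^k\b\|_{L^\8}$ for $k=1,2$, together with the elementary identities $\int_{B_r}|z|^{1-N}dz=\w_N r$ and $\int_{B_r}|z|^{2-N}dz=\tfrac{\w_N}{2}r^2$, the powers of $r$ cancel and the near-field integral is bounded by $C(N,\b)\L$, uniformly in $r$. Choosing $r_0\in(0,\r_0)$ small enough that $\ell^{-1}(r)\geq 1$ on $(0,r_0)$ (any $r_0\leq e^{-1}$ works) absorbs this $O(1)$ contribution into $C\L\ell^{-1}(r)$, completing the bound after summing the two pieces.

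A conceptual point worth noting is that, without an evenness assumption on $K$, the linear term in the Taylor expansion does not automatically vanish---but this is irrelevant for an upper bound, since simply dominating the Taylor polynomial in absolute value already delivers the $O(1)$ near-field estimate. The cancellation structure that would be required for a sharper two-sided bound is therefore unnecessary here. I do not anticipate any genuine obstacle beyond careful bookkeeping of constants once the splitting scale $|y-x|=r$ is selected.
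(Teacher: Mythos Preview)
Your proposal is correct and follows essentially the same approach as the paper: split the integral at the scale $|y-x|\sim r$, use smoothness of $\b_r$ on the inner ball to get an $O(1)$ contribution, and use the crude $L^\infty$ bound on the outer annulus to pick up the factor $|\ln r|=\ell^{-1}(r)$. The only cosmetic differences are that the paper performs the change of variables $y=r\bar y$ to work in rescaled coordinates and uses only the Lipschitz bound $|\b(\bar x)-\b(\bar y)|\leq[\b]_{C^{0,1}}|\bar x-\bar y|$ on the near part, whereas you stay in the original coordinates and invoke a (harmless but unnecessary) second-order Taylor expansion.
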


\begin{proof}
For $x = r\bar x \in B_{r}$,
\begin{align*}
L_K\b_r(x)
&= \int_{B_{1}(x)}\frac{\b_r(x)-\b_r(y)}{|y-x|^N}K(x,y-x)dy,\\
&= \int_{B_{1}(x)}\frac{\b(r^{-1}x)-\b(r^{-1}y)}{|y-x|^N}K(x,y-x)dy,\\
&= \int_{B_{r^{-1}}(\bar x)}\frac{\b(\bar x)-\b(\bar y)}{|\bar y-\bar x|^N}K(r\bar x,r(\bar y-\bar x))d\bar y\\
&\leq \L[\b]_{C^{0,1}(B_2)}\int_{B_2(\bar x)}\frac{1}{|\bar y-\bar x|^{N-1}}d\bar y
+\L\int_{B_{r^{-1}}(\bar x)\backslash B_2(\bar x)}\frac{1}{|\bar y-\bar x|^N}d\bar y\\
&\leq C\L([\b]_{C^{0,1}(B_1)}+ |\ln r|),\\
&\leq C\L\ell^{-1}(r).
\end{align*}
In the last inequality we need to assume that $r_0$ is sufficiently small in order to absorb the constant term.
\end{proof}

\begin{lemma}\label{lem:gain}
Let $\r\in(0,1/3)$, $A\ss B_\r \sm B_{3\r^2}$, and $\m(dx) = \ell(|x|)|x|^{-N}dx$. Then it holds that $L_K\chi_A \leq -c\l\ell^{-1}(\r)\m(A)$ in $B_{2\r^2}$ for some $c>0$.
\end{lemma}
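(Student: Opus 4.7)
The plan is to directly compute $L_K\chi_A(x)$ for $x \in B_{2\rho^2}$ and use the three key geometric facts that (i) $x$ lies outside $A$, (ii) the ball $B_1(x)$ contains $A$ entirely, and (iii) the distances $|y-x|$ and $|y|$ are comparable for $y \in A$.

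First I would observe that since $A \subset B_\rho \setminus B_{3\rho^2}$ and $x \in B_{2\rho^2} \subset B_{3\rho^2}$, we have $\chi_A(x) = 0$. So
\[
L_K\chi_A(x) = -\int_{B_1(x)\cap A}\frac{K(x,y-x)}{|y-x|^N}\,dy.
\]
Next I check that $A \subset B_1(x)$: for $y\in A$ and $x\in B_{2\rho^2}$ with $\rho<1/3$, we have $|y-x|\leq |y|+|x|\leq \rho+2\rho^2<1$. Using the uniform ellipticity $K\geq \lambda$,
\[
L_K\chi_A(x) \leq -\lambda\int_A\frac{dy}{|y-x|^N}.
\]

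The core comparison step is then to control $|y-x|$ from above by $|y|$. For $y\in A$ we have $|y|\geq 3\rho^2$, so $|y-x|\leq |y|+2\rho^2 \leq |y|+\tfrac{2}{3}|y| = \tfrac{5}{3}|y|$, which gives $|y-x|^{-N}\geq (3/5)^N|y|^{-N}$. Combined with the fact that $|y|\leq\rho$ implies $\ell(|y|)\leq \ell(\rho)$ (so $1/\ell(|y|)\geq \ell^{-1}(\rho)$), we find
\[
\int_A\frac{dy}{|y-x|^N}\geq \Bigl(\frac{3}{5}\Bigr)^N\int_A\frac{1}{\ell(|y|)}\cdot\frac{\ell(|y|)}{|y|^N}\,dy\geq \Bigl(\frac{3}{5}\Bigr)^N\ell^{-1}(\rho)\,\mu(A),
\]
so the claim holds with $c = (3/5)^N$.

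There is no real obstacle here; the lemma is essentially a bookkeeping exercise. The only subtlety is confirming that $B_1(x)$ captures all of $A$ (which uses $\rho<1/3$) and that the annular geometry $|y|\geq 3\rho^2$ versus $|x|\leq 2\rho^2$ gives the clean ratio $|y-x|/|y|\leq 5/3$; both of these are precisely the reason why the statement was formulated with the radii $\rho^2$, $2\rho^2$, $3\rho^2$, and $\rho$ in the first place.
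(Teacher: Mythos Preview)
Your proof is correct and follows exactly the same approach as the paper's: compute $L_K\chi_A(x)$ directly, use $K\geq\lambda$ and $\chi_A(x)=0$, then exploit the comparability $|y-x|\sim|y|$ together with $\ell(|y|)\leq\ell(\rho)$. The paper's version is simply terser---it asserts the comparability without writing out the explicit ratio $5/3$ or the constant $(3/5)^N$---but the argument is identical.
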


\begin{proof}
There is $c>0$ such that, for all $x\in B_{2\r^2}$,
\begin{align*}
L_K\chi_A(x) &\leq -\l\int_A \frac{dy}{|y-x|^N}\leq -c\l\ell^{-1}(\r)\int_A \frac{\ell(|y|)}{|y|^N}dy =  -c\l\ell^{-1}(\r)\m(A),
\end{align*}
where we used that $|y-x|$ is comparable to $|y|$, because $x\in B_{2\r^2}$ and $y\in A\subset B_\rho\sm B_{3\r^2}$.
\end{proof}

\begin{lemma}\label{lem:tail}
Let $\r\in(0,1)$, $\a\in(0,1/2)$, and $t_\a(x) = (\ell^\a(|x|)-\ell^\a(\r))_+$. Then it holds that $L_K t_\a \geq -C\L(1+\a\ell^{\a-1}(\r))$ in $B_{\r/2}$ for some $C>0$.
\end{lemma}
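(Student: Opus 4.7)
The plan is to exploit the fact that $t_\a$ vanishes on $B_\r$, so that for $x\in B_{\r/2}$ all that survives in $L_Kt_\a(x)$ is the nonpositive ``tail'' contribution $-\int \tfrac{t_\a(y)}{|y-x|^N}K(x,y-x)\,dy$, which one bounds from below using only the upper ellipticity bound $K\leq \L$.

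More explicitly: since $\ell$ is non-decreasing, $t_\a\equiv 0$ on $B_\r$, and therefore also $t_\a(x)=0$ for $x\in B_{\r/2}$. Using that $K\geq 0$ and $K\leq \L$, I would write
\[
L_Kt_\a(x) \;=\; -\int_{B_1(x)}\frac{t_\a(y)}{|y-x|^N}\,K(x,y-x)\,dy \;\geq\; -\L\int_{\{\r<|y|\}\cap B_1(x)} \frac{\ell^\a(|y|)-\ell^\a(\r)}{|y-x|^N}\,dy.
\]
For $x\in B_{\r/2}$ and $|y|>\r$ one has $|y-x|\geq |y|/2$, so after substituting this bound, passing to spherical coordinates, and using $B_1(x)\subset B_{1+\r/2}$, the estimate reduces to controlling the one-dimensional integral
\[
I(\r):=\int_\r^{1+\r/2}\frac{\ell^\a(r)-\ell^\a(\r)}{r}\,dr.
\]

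The main step is to estimate $I(\r)$ sharply. I would split it at $r=\r_0=0.1$; on $[\r_0,1+\r/2]$ the integrand is uniformly bounded, giving an $O(1)$ contribution. On $[\r,\r_0]$ I would change variables $s=\ell(r)$, so that $\tfrac{dr}{r}=\tfrac{ds}{s^2}$, yielding
\[
\int_\r^{\r_0}\frac{\ell^\a(r)-\ell^\a(\r)}{r}\,dr \;=\; \int_{\ell(\r)}^{\ell(\r_0)} \bigl(s^{\a-2}-\ell^\a(\r)s^{-2}\bigr)\,ds.
\]
Both antiderivatives are elementary: the first gives $\tfrac{1}{\a-1}\bigl(\ell^{\a-1}(\r_0)-\ell^{\a-1}(\r)\bigr)$ and the second gives $\ell^\a(\r)\bigl(\ell^{-1}(\r)-\ell^{-1}(\r_0)\bigr)=\ell^{\a-1}(\r)+O(\ell^\a(\r))$. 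Collecting the two large terms produces a cancellation of the form
\[
\frac{\ell^{\a-1}(\r)}{1-\a}-\ell^{\a-1}(\r) \;=\; \frac{\a}{1-\a}\,\ell^{\a-1}(\r),
\]
up to $O(1)$ terms. Since $\a\in(0,1/2)$ the factor $\tfrac{1}{1-\a}$ is bounded, so $I(\r)\leq C\bigl(1+\a\,\ell^{\a-1}(\r)\bigr)$, and the lemma follows with a constant depending only on $N$.

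The only delicate part is the bookkeeping in the change of variables: one must observe that the two leading-order divergences $\ell^{\a-1}(\r)/(1-\a)$ and $-\ell^{\a-1}(\r)$ combine precisely to give the factor $\a$ that appears in the statement, explaining why the bound degenerates only like $\a\ell^{\a-1}(\r)$ rather than like $\ell^{\a-1}(\r)$ alone. This gain by a factor $\a$ is exactly what makes the barrier argument in Lemma~\ref{meas_est} work when one later chooses $\a$ small.
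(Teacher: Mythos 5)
Your proposal is correct and follows essentially the same route as the paper: drop $t_\a(x)=0$ on $B_{\rho/2}$, bound $K$ by $\L$, replace $|y-x|$ by $|y|$ up to a dimensional constant, truncate at $\rho_0$, and compute the resulting one-dimensional integral, where the exact cancellation $\tfrac{1}{1-\a}-1=\tfrac{\a}{1-\a}$ produces the crucial factor of $\a$. The only cosmetic difference is your substitution $s=\ell(r)$ versus the paper's $z=|\ln r|$, which are the same computation.
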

\begin{proof}
For $x\in B_{\r/2}$
\begin{align*}
L_K t_\a(x) &\geq -\L\int_{B_1(x)\sm B_\r} \frac{\ell^\a(|y|)-\ell^\a(\r)}{|y-x|^N}dy \geq -C\L\11+\int_{B_{\r_0} \sm B_\r} \frac{|\ln|y||^{-\a}-|\ln \r|^{-\a}}{|y|^N}dy\2,
\end{align*}
where the last inequality is justified by using that $|y-x|$ is comparable to $|y|$ (because $x\in B_{\r/2}$ and $y\in \R^N\sm B_{\r}$) and truncating the integral at the radius $\r_0$ in order to replace $\ell(|y|)$ by $|\ln|y||^{-1}$. Finally, we see that the integral is computed by
\[
\int_{B_{\r_0} \sm B_\r} \frac{|\ln|y||^{-\a}-|\ln \r|^{-\a}}{|y|^N}dy \leq \w_N\int_0^{|\ln \r|} (z^{-\a}-|\ln \r|^{-\a})dz = \frac{\w_N\a}{1-\a}|\ln \r|^{1-\a}.
\]
\end{proof}

\begin{proof}[Proof of Lemma \ref{meas_est}]
Let $\b \in C^\8_0(B_1)$ be the bump function from Lemma \ref{lem_bump}, 
\begin{align*}
A:=\{u\leq \ell^\a(\r)/2\}\cap B_\r\sm B_{3\r^2},\qquad t_\a(x):= (\ell^\a(|x|)-\ell^\a(\r))_+,   
\end{align*}
and construct from these ingredients the barrier
\begin{align*}
\varphi := (\ell^\a(\r)-\ell^\a(\r^2))\b_{2\r^2} + \tfrac{\ell^\a(\r)}{2}\chi_A - t_\a.    
\end{align*}
Our goal is to show that, for $r_0,\a,\d\in(0,1)$ small, it holds that $L_K\varphi \leq -\d\ell^{\a-1}(\r^2)$ in $B_{2\r^2}$. Once this is established, we would obtain that $u\geq \varphi$ in $B_{2\r^2}$ by applying the maximum principle to $\varphi-u$ in $B_{2\r^2}$. Hence, $u \geq \varphi = \ell^\a(\r)-\ell^\a(\r^2)$ in $B_{\r^2}$, which would settle the proof.

Using lemmas \ref{lem_bump}, \ref{lem:gain}, and \ref{lem:tail},  we have, for $x\in B_{2\r^2}$, that
\begin{align*}
  L_K\varphi(x) &\leq (\ell^\a(\r)-\ell^\a(\r^2))C\ell^{-1}(2\r^2) - c\ell^{\a-1}(\r)\m(A) + C(1+\a\ell^{\a-1}(\r)),\\
  &\leq (2^\a-1)C\ell^{\a-1}(\r^2) - c\ell^{\a-1}(\r^2)\m(A) + C(1+\a\ell^{\a-1}(\r^2)),\\
  &\leq C\ell^{\a-1}(\r^2)(2^\a-1 + \a-c)+C.
\end{align*}
For the second (gaining) term we have used that
\[
\m(A) \geq \frac{1}{2}\m(B_\r\sm B_{3\r^2}) = \frac{\w_N}{2}\int_{3\r^2}^\r \frac{\ell(t)}{t}dt \geq \frac{\w_N}{2}\int_{3\r^2}^{\sqrt 3\r} \frac{d(\ln t)}{|\ln t|} = \frac{\w_N\ln 2}{2}>0.
\]
Using now that $2^\a-1+\a\to 0$ as $\a\to 0$ we have that there is some $\alpha_0=\alpha_0(N)>0$ such that, for $\a\in(0,\alpha_0)$, we can to bound the right-hand side by $-\d\ell^{\a-1}(\r^2)$ as long as $\r\in(0,r_0)$, with $r_0$ sufficiently small in order to absorb the positive constant.
\end{proof}

\begin{corollary}\label{cor:mod_cont}
Let $r_0,\a\in(0,1)$ be as in Lemma \ref{meas_est}. Given $r\in(0,r_0)$ and $u \in L^\8(B_{1+r})\cap C^{Dini}_{loc}(B_r)$ with $L_Ku \in L^\8(B_r)$, then
\[
\ell^{\a}(r)\sup_{\r\in (0,r)}\ell^{-\a}(\r) [u]_{\mathcal L^0(B_{\r})} \leq C\left([u]_{\mathcal L^0(B_{1+r})}+\ell(r)\|L_Ku\|_{L^\8(B_r)}\right)
\]
for some $C>0$ depending on $\l$, $\L$, and $N$.
\end{corollary}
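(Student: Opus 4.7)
The plan is a Krylov--Safonov-style diminish of oscillation argument, iterating Lemma~\ref{meas_est} on the doubly-geometric sequence of scales $\r_k := r^{2^k}$, $k\geq 0$, which satisfy $\ell^\a(\r_{k+1}) = 2^{-\a}\ell^\a(\r_k)$ and therefore match the decrement per step produced by a single application of the lemma. I would begin by normalizing: dividing $u$ by $\mathcal{A}/(\d\ell^\a(r))$, where $\mathcal{A} := [u]_{\cL^0(B_{1+r})} + \ell(r)\|L_Ku\|_{L^\8(B_r)}$ and $\d$ is the constant from Lemma~\ref{meas_est}, yields $[u]_{\cL^0(B_{1+r})} \leq \d\ell^\a(r)$ and $\|L_K u\|_{L^\8(B_r)} \leq \d\ell^{\a-1}(r) \leq \d\ell^{\a-1}(\r_k)$ uniformly in $k$ (using that $\ell^{\a-1}$ increases as $\r_k\leq r$ decreases), which is exactly the right-hand-side bound required by the lemma at every scale.

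The inductive claim is the existence of monotone sequences $a_k \leq b_k$ (with $a_k$ non-decreasing, $b_k$ non-increasing and $b_k - a_k = \ell^\a(\r_k)$) such that $a_k \leq u \leq b_k$ on $B_{\r_k}$, together with the tail bounds $u(x) \geq a_k - (\ell^\a(|x|) - \ell^\a(\r_k))_+$ and $u(x) \leq b_k + (\ell^\a(|x|) - \ell^\a(\r_k))_+$ on $B_{1+r}$. The inductive step is organized around a measure dichotomy on the annulus $B_{\r_k}\sm B_{3\r_{k+1}}$ with respect to $\m(dx):=\ell(|x|)|x|^{-N}dx$: either $\m(\{u\geq (a_k+b_k)/2\} \cap \text{annulus}) \geq \tfrac12 \m(\text{annulus})$ or the reverse. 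In the first case (the second being symmetric under $u\mapsto a_k+b_k-u$), $V := u - a_k$ satisfies the three hypotheses of Lemma~\ref{meas_est} at scale $\r_k$: the measure inequality from the dichotomy, the tail bound $V(x) \geq -(\ell^\a(|x|)-\ell^\a(\r_k))_+$ from the IH, and the right-hand-side bound $L_K V \geq -\d\ell^{\a-1}(\r_k)$ from the normalization. Its conclusion $V \geq \ell^\a(\r_k) - \ell^\a(\r_{k+1})$ on $B_{\r_{k+1}}$ then allows the update $a_{k+1} := a_k + \ell^\a(\r_k) - \ell^\a(\r_{k+1})$ and $b_{k+1} := b_k$, preserving $b_{k+1}-a_{k+1} = \ell^\a(\r_{k+1})$ and the monotonicity of $a_\cdot, b_\cdot$.

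The hard part will be maintaining the tail hypothesis across iterations: in the annulus $\r_{k+1}<|x|\leq\r_k$ the previous IH only gives $u\geq a_k$ (since the tail term vanishes for $|x|\leq\r_k$), whereas the updated IH at step $k+1$ demands $u(x)\geq a_{k+1}-(\ell^\a(|x|)-\ell^\a(\r_{k+1}))_+$, which near $|x|=\r_{k+1}^+$ requires $u\geq a_{k+1}>a_k$. To close this gap I plan to strengthen the IH by using a \emph{smoother} cumulative tail barrier built from the full auxiliary function $\varphi_j$ produced by each previous application of the lemma, whose bump component interpolates continuously between the levels $a_j$ and $a_{j+1}$ across the transition annulus $B_{2\r_{j+1}}\sm B_{\r_{j+1}}$, rather than only its pointwise consequence $u\geq a_j$. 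A careful re-examination of the proof of Lemma~\ref{meas_est} shows that its conclusion persists under the relaxed tail hypothesis $V\geq -C_\ast t_\a$ at the price of inflating the constant in Lemma~\ref{lem:tail} by a factor $C_\ast$ (the bump and gaining estimates are unaffected) and correspondingly shrinking $\d$; with $C_\ast$ chosen as a finite universal constant the strengthened IH becomes self-consistent. The base case at $k=0$ follows from the ambient $L^\8$ control $u(x)-a_0\geq -\d\ell^\a(r)$ on $B_{1+r}$, combined with starting the iteration at a buffered base scale $r_\ast := r/2$ and handling $\r\in(r_\ast,r)$ by the trivial bound $[u]_{\cL^0(B_\r)}\leq [u]_{\cL^0(B_r)}$.

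Finally, for any $\r\in(0,r)$ choose the unique $k\geq 0$ with $\r\in(\r_{k+1},\r_k]$; then $[u]_{\cL^0(B_\r)} \leq b_k - a_k = \ell^\a(\r_k) \leq 2^\a\ell^\a(\r)$. Taking the supremum gives $\sup_{\r\in(0,r)}\ell^{-\a}(\r)[u]_{\cL^0(B_\r)}\leq 2^\a$, and undoing the normalization yields $\ell^\a(r)\sup_{\r\in(0,r)}\ell^{-\a}(\r)[u]_{\cL^0(B_\r)} \leq (2^\a/\d)\,\mathcal{A}$, which is the claimed inequality with $C=2^\a/\d$.
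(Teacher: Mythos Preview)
Your overall strategy and your identification of the tail-propagation obstruction are both correct, but the proposed fix does not close the gap. Two concrete failures: (i) the bump component of the barrier $\varphi_j$ is supported on $B_{2\r_{j+1}}$, which is a negligible portion of the transition annulus $B_{\r_j}\sm B_{\r_{j+1}}$ since $\r_j=\r_{j+1}^{1/2}\gg 2\r_{j+1}$; on $B_{\r_j}\sm B_{2\r_{j+1}}$ the barrier gives you nothing beyond $u\geq a_j$. (ii) The $C_\ast$-relaxation cannot absorb the remaining deficit: on that region you would need $C_\ast\bigl(\ell^\a(|x|)-\ell^\a(\r_{j+1})\bigr)\geq \ell^\a(\r_j)-\ell^\a(\r_{j+1})=(2^\a-1)\ell^\a(\r_{j+1})$, but for $|x|=2\r_{j+1}$ one computes $\ell^\a(2\r_{j+1})-\ell^\a(\r_{j+1})\sim \a(\ln 2)\,\ell^{\a+1}(\r_{j+1})$, which is smaller by a factor $\ell(\r_{j+1})\to0$, so no fixed $C_\ast$ works. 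More conceptually, the factor $\a$ in front of $\ell^{\a-1}(\r)$ in Lemma~\ref{lem:tail} is precisely what allows the balance in Lemma~\ref{meas_est} to close as $\a\to0$; any tail profile that replaces this $\a$-small contribution by an $O(1)$ one (as yours effectively does on the transition annulus) destroys the lemma.

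The paper avoids the whole tail-maintenance issue by running the argument \emph{by contradiction} rather than by forward induction. One normalizes $v:=\d\ell^\a(r^2)\,u/\mathcal A$ so that $|L_Kv|\leq\d\ell^{\a-1}(r)$ in $B_r$ and $[v]_{\cL^0(B_\r)}\leq\ell^\a(\r)$ automatically for $\r\in[r^2,1+r)$. If the claimed bound fails below $r^2$, pick $\r_1$ so that the oscillation control $\osc_{B_{\r'}}v\leq\ell^\a(\r')$ holds for \emph{every} $\r'>\r_1$ but is violated at some $\r_2\in(\r_1^2,\r_1)$; set $\r:=\r_2^{1/2}$, $m:=\inf_{B_\r}v$, and (after the measure dichotomy) $w:=v-m$. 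The tail hypothesis of Lemma~\ref{meas_est} is then checked \emph{once}, directly from the all-scales oscillation bound, with no inductive bookkeeping; a single application of the lemma yields $\osc_{B_{\r_2}}v\leq\ell^\a(\r_2)$, a contradiction. This one-shot structure is the essential simplification over your scheme, and I recommend reorganizing the proof along these lines.
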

\begin{proof}
Let $r\in(0,r_0)$ and let $\delta\in(0,1)$ be as in Lemma \ref{meas_est} and
\[
v:=\frac{\ell^\a(r^2) \d }{[u]_{\mathcal L^0(B_{1+r})}+
\ell(r)\|L_Ku\|_{L^\8(B_r)}}u.
\]
It suffices to show that $[v]_{\cL^0(B_\r)} \leq \ell^\a(\r)$ for $\r\in(0,r)$.

The function $v$ is constructed such that $|L_Kv| \leq \d \ell^{\a-1}(r)$ in $B_r$ and $[v]_{\mathcal L^0(B_\r)} \leq \ell^\a(\r)$ for $\r\in[r^2,r)$. Next, we show that $[v]_{\mathcal L^0(B_\r)} \leq \ell^\a(\rho)$ also holds for $\r \in(0,r^2)$.

Assume, by contradiction, that there are $\r_1 \in (0,r^2)$ and $\r_2 \in (\r_1^2,\r_1)$ such that
\begin{align*}
\sup_{\r\in(\r_1,1+r)}\ell^{-\a}(\r)\osc_{B_{\r}}  v \leq 1 \qquad\text{ and } \qquad \osc_{B_{\r_2}}  v > \ell^{\a}(\r_2).
\end{align*}
Let $\r:=\r_2^{1/2}$, $m:= \inf_{B_{\r}} v$, and consider the sets
\begin{align*}
S_+ := \{v - m \geq \ell^\a(\r)/2\}\cap B_{\r} \sm B_{3\r^2},\qquad S_- := \{v - m \leq \ell^\a(\r)/2\}\cap B_{\r} \sm B_{3\r^2}.
\end{align*}
For $\mu$ being the measure defined in Lemma \ref{meas_est}, either $\mu(S_-)$ or $\mu(S_+)$ is at least $\frac{1}{2}\mu(B_{\r} \sm B_{3\r^2})$. Assume, without loss of generality, that $\mu(S_+) \geq \frac{1}{2}\mu(B_{\r} \sm B_{3\r^2})$ and consider then the translation $w := v - m.$ The assumption on $S_+$ means that
\[
\m(\{w\geq \ell^\a(\r)/2\}\cap B_{\r} \sm B_{3\r^2}) = \m(S_+) \geq \tfrac{1}{2}\m(B_{\r} \sm B_{3\r^2}). 
\]
Notice that we still have $L_Kw\geq -\d \ell^{\a-1}(2\r^2)$ in $B_{2\r^2}$ and moreover, since $\r>\r_1$, we get by the oscillation hypothesis that $w(x) \geq t_\a(x) = -(\ell^\a(|x|)-\ell^\a(\r))_+$.

Given that $w$ satisfies the conditions of Lemma \ref{meas_est}, we reach a contradiction with the hypothesis $\osc_{B_{\r_2}} v>\ell^\a(\r_2)$, because
\begin{align*}
\osc_{B_{\r_2}} v = \osc_{B_{\r^2}} w \leq \ell^\a(\r)-(\ell^\a(\r)-\ell^\a(\r^2)) = \ell^\a(\r_2).
\end{align*}
This concludes the proof.
\end{proof}

We are ready to show Theorem \ref{thm1}

\begin{proof}[Proof of Theorem \ref{thm1}]
The proof follows from Corollary \ref{cor:mod_cont} using a standard covering argument.  We give the details for completeness. 
For any $B_{2r}(x) \ss \W$ we have that
\begin{align*}
    [u]_{\mathcal L^\a(B_r(x))} \leq \sup_{y\in B_r(x)} \sup_{\r\in(0,2r)} \ell^{-\a}(\r)[u]_{\mathcal L^0(B_\r)}.
\end{align*}
Hence by Corollary \ref{cor:mod_cont} we get that as long as $r\in(0,r_0)$ and $B_{2r}(x)\ss\W$
\begin{align*}
\ell^\a(r)[u]_{\mathcal L^\a(B_r(x))} \leq C\1\|u\|_{L^\8(B_{1}(\W))} +\ell(r)\|L_Ku\|_{L^\8(B_{2r}(x))}\2.
\end{align*}
By Lemma \ref{prop2} we get that the second term gets bounded by $\|L_Ku\|_{\mathcal L^0(\W)}^{(1)}$ as expected. By adding $\|u\|_{L^\8(B_r(x))}$ on both sides and applying once again Lemma \ref{prop2} we conclude the proof.
\end{proof}

\subsection{Higher order estimates}\label{hoe}

The goal of this section is to show the following result.
\begin{theorem}\label{thm2}
Let $\W$ be an open set with bounded inner radius $R:=\sup_{x\in\W}\dist(x,\p\W)<\8$. Let $L_K$ be translation invariant \ref{itm:ti}, uniformly elliptic \ref{itm:uellip} with parameters $0<\l\leq \L$, and $1$-regular (Definition \ref{def:reg}) with constant $\L$. Let $u\in L^\8(B_1(\W))\cap C^{Dini}_{loc}(\W)$ be such that $\|L_Ku\|^{(1)}_{\mathcal L^1(\W)}<\8$.  Then there exists $\a=\a(\l,\L,N)\in(0,1)$ such that $u\in \mathcal L^{1+\a}_{loc}(\W)$ and
\[
\|u\|^{(0)}_{\mathcal L^{1+\a}(\W)} \leq C\1\|u\|_{L^\8(B_1(\W))}+\|L_Ku\|^{(1)}_{\mathcal L^1(\W)}\2
\]
for some $C=C(\l,\L,N,R)>0$.
\end{theorem}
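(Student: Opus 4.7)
The strategy is a bootstrap from the lower-order Theorem \ref{thm1}, exploiting the translation invariance \ref{itm:ti} of $L_K$ to promote the $\mathcal L^1$ control on $f:=L_Ku$ into $\mathcal L^{1+\alpha}$ regularity on $u$. Since the norm $\|f\|^{(1)}_{\mathcal L^1(\Omega)}$ dominates $\|f\|^{(1)}_{\mathcal L^0(\Omega)}$ (the oscillation seminorm $\sup\ell(d(x,y))|f(x)-f(y)|$ being trivially bounded by $2\sup\ell(d(x))|f(x)|$, using $\ell(d(x,y))\le\ell(d(x))$), Theorem \ref{thm1} supplies an initial exponent $\alpha_0=\alpha_0(\lambda,\Lambda,N)\in(0,1)$ with
\begin{align*}
\|u\|^{(0)}_{\mathcal L^{\alpha_0}(\Omega)} \leq C\bigl(\|u\|_{L^\infty(B_1(\Omega))}+\|f\|^{(1)}_{\mathcal L^1(\Omega)}\bigr).
\end{align*}
This is the anchor from which the bootstrap starts.

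Fix a ball $B_{4r}(x_0)\subset\Omega$ with $r$ small and $h\in\R^N$ with $|h|<r$, and set $v_h(x):=u(x+h)-u(x)$. Translation invariance gives the crucial identity
\begin{align*}
L_K v_h(x) = f(x+h)-f(x)
\end{align*}
for $x$ near $x_0$. The seminorm $[f]^{(1)}_{\mathcal L^1(\Omega)}$ bounds $\|L_K v_h\|_{L^\infty(B_{2r}(x_0))}\leq C\ell(|h|)$, while the anchor gives $\|v_h\|_{L^\infty}\leq C\ell^{\alpha_0}(|h|)$. Applying Theorem \ref{thm1} to $v_h$ on $B_{2r}(x_0)$ then yields
\begin{align*}
[v_h]^{(0)}_{\mathcal L^{\alpha_0}(B_r(x_0))} \leq C\ell^{\alpha_0}(|h|),
\end{align*}
and specializing to $y=x-h$ produces the second-difference bound $|u(x+h)-2u(x)+u(x-h)|\leq C\ell^{2\alpha_0}(|h|)$. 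Iterating: if after $k$ stages $u\in\mathcal L^{\gamma_k}_{loc}$ with $\gamma_k<1$, the same differencing argument yields $\gamma_{k+1}=\gamma_k+\alpha_0$, where the passage from second-difference control to first-order modulus is obtained by a Zygmund-type identity such as $\Delta_h u=\tfrac12\Delta_{2h}u-\tfrac12\Delta^2_h u$ combined with the integrability $\int_0^r\ell^{1+\alpha}(\rho)/\rho\,d\rho=\ell^\alpha(r)/\alpha$ noted in Section~\ref{sec:1}. After at most $\lceil 1/\alpha_0\rceil$ iterations the exponent exceeds $1$, yielding $u\in\mathcal L^{1+\alpha}_{loc}$ for some $\alpha=\alpha(\lambda,\Lambda,N)>0$.

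Finally, Lemma \ref{prop2} assembles the local ball estimates into the weighted seminorm $\|u\|^{(0)}_{\mathcal L^{1+\alpha}(\Omega)}$; the boundedness $L_K:\mathcal L^{1+\alpha}_{loc}\to\mathcal L^\alpha_{loc}$ provided by Theorem \ref{prop3} (valid because \eqref{1reg} and Lemma \ref{lem:alphareg} ensure $1$-regularity of $K$) guarantees that the differenced equation makes classical sense and that the constants close with the right dependencies. The main obstacle is precisely the lack of scale invariance underlined in Section \ref{sec:1}: one cannot rescale the differenced equation $L_K v_h=f(\cdot+h)-f(\cdot)$ to a canonical ball and repeat; each iteration must be performed at the physical scale $|h|$ with all weighted log-Hölder constants tracked explicitly. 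The $1$-regularity hypothesis on $K$ is exactly what keeps the kernel-difference terms uniformly controlled as $|h|\to 0$, permitting the finite-stage bootstrap to terminate with constants depending only on $\lambda$, $\Lambda$, $N$, and $R$.
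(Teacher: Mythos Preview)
Your overall strategy---bootstrap via increments and translation invariance, then a Zygmund-type passage from second to first differences, then Lemma~\ref{prop2}---is the paper's strategy. But there is a genuine gap at the step where you apply Theorem~\ref{thm1} (equivalently its local form, Corollary~\ref{cor:mod_cont}) to the raw increment $v_h$. That estimate on $B_{2r}(x_0)$ reads
\[
\ell^{\alpha_0}(r)\sup_{\rho\in(0,r)}\ell^{-\alpha_0}(\rho)\,[v_h]_{\mathcal L^0(B_\rho(x_0))}
\;\leq\; C\bigl([v_h]_{\mathcal L^0(B_{1+2r}(x_0))}+\ell(r)\|L_Kv_h\|_{L^\infty(B_{2r}(x_0))}\bigr),
\]
and the first term on the right is an oscillation over the \emph{unit-sized} set $B_{1+2r}(x_0)$. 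Your assertion that ``the anchor gives $\|v_h\|_{L^\infty}\leq C\ell^{\alpha_0}(|h|)$'' is not valid there: the anchor $\|u\|^{(0)}_{\mathcal L^{\alpha_0}(\Omega)}$ only yields the \emph{weighted} bound $|u(x{+}h)-u(x)|\leq C\,\ell^{-\alpha_0}(d(x,x{+}h))\,\ell^{\alpha_0}(|h|)$, which degenerates as $d(x)\to 0$, and $B_{1+2r}(x_0)$ typically reaches $\partial\Omega$ and beyond, where only $u\in L^\infty$ is available. Thus $[v_h]_{\mathcal L^0(B_{1+2r}(x_0))}$ is merely $O(\|u\|_{L^\infty})$, not $O(\ell^{\alpha_0}(|h|))$, and the iteration stalls at the very first step.

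This is exactly the nonlocal ``tail'' obstruction flagged in the paragraph preceding Lemma~\ref{lem:lin2}. The paper's fix is to introduce a cutoff $\eta\in C_c^\infty(B_{5r}(x_0))$ with $\eta\equiv 1$ on $B_{4r}(x_0)$ and apply the lower-order estimate to $\delta_{\varepsilon e}(\eta u)$ rather than to $\delta_{\varepsilon e}u$: now the tail term \emph{is} small, because $\eta u$ is supported where the interior modulus is available. The price is a commutator $L_K\delta_{\varepsilon e}(\eta u)-\delta_{\varepsilon e}L_Ku$, and Lemma~\ref{lem:lin2} shows this is $O(\ell^\beta(\varepsilon)\ell^{-1}(r)\|u\|_{L^\infty})$ precisely thanks to the $\beta$-regularity of $K$. (Note that in your scheme with raw increments, translation invariance makes all kernel-difference terms vanish identically, so your closing sentence about $1$-regularity controlling ``kernel-difference terms'' has no referent; the hypothesis enters only through the cutoff commutator.) With the cutoff inserted, your outline becomes the paper's proof: Lemma~\ref{lem:lin2} feeds Corollary~\ref{cor1} (second-difference gain), then Lemma~\ref{lem:stein} and Corollary~\ref{cor2} convert this to a first-order gain of $\alpha_0$, Corollary~\ref{cor:high} iterates finitely many times past exponent $1$, and Lemma~\ref{prop2} globalizes.
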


For the proof, we show first some auxiliary lemmas.

\subsubsection{An extrapolation property for the log-Hölder moduli of continuity}

In this subsection we consider one dimensional functions $u:(-\r,\r)\to\R$. We denote the first and second order differences of size $\e>0$ as
\[
\d_\e u(x) := u(x+\e)-u(x), \qquad \d_\e^2 u(x) = u(x+2\e)-2u(x+\e)+u(x).
\]
Notice that $\d_\e^2u(x)$ is the second order difference centered at $x+\e$ and not $x$.

The following lemma extends \cite[Lemma 5.6]{MR1351007} to the log-Hölder setting. To simplify the notation we remove one parenthesis whenever we refer to different function spaces over open intervals, such as $C(a,b) := C((a,b))$.

\begin{lemma}\label{lem:stein}
Given $\a>0$ there exists $C>0$ such that, for $\r>0$ and $u\in C(-\r,\r)$, it holds that
\[
\ell^\a(\r) \sup_{\e\in (0,\r)} \ell^{-\a}(\e)\|\d_\e u\|_{L^\8(-\r,\r-\e)} \leq C\1[u]_{\mathcal L^{0}(-\r,\r)} +\ell^\a(\r)\sup_{\e\in(0,\r)}\ell^{-\a}(\e)\|\d_\e^2 u\|_{L^\8(-\r,\r-2\e)}\2.
\]
\end{lemma}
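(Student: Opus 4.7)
The identity behind the proof is the arithmetic relation
\[
\delta_\e u(x) \;=\; \tfrac12\,\delta_{2\e} u(x)\;-\;\tfrac12\,\delta_\e^2 u(x),
\]
which follows directly from the definitions of $\delta_\e u$ and $\delta_\e^2 u$ (note that $\delta_\e^2 u(x)=u(x+2\e)-2u(x+\e)+u(x)=\delta_{2\e}u(x)-2\delta_\e u(x)$). Iterating this identity $k$ times on the first-difference term gives
\[
\delta_\e u(x) \;=\; 2^{-k}\,\delta_{2^k\e}u(x)\;-\;\sum_{j=0}^{k-1}2^{-(j+1)}\,\delta_{2^j\e}^2 u(x),
\]
valid whenever all points involved lie in $(-\r,\r)$. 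This reduces the task to controlling a geometric remainder plus a dyadic sum of second differences, which is the standard Marchaud extrapolation scheme adapted to the log-Hölder scale.

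The strategy is then the following. By the symmetry $u\mapsto u(-\,\cdot\,)$ we may assume that the point $x$ lies in the left half of the interval, so that we can iterate to the right all the way up to $k:=\lfloor\log_2(\r/\e)\rfloor$; for those indices $2^j\e\in(0,\r)$ and the identity above is legitimate. (The case $\e$ comparable to $\r$ is trivial, since then $\ell^\a(\e)/\ell^\a(\r)$ is bounded below by a positive constant and the first-difference is already controlled by $[u]_{\cL^0(-\r,\r)}$.) Set $O:=[u]_{\cL^0(-\r,\r)}$ and $M:=\ell^\a(\r)\sup_{\e\in(0,\r)}\ell^{-\a}(\e)\|\delta_\e^2 u\|_{L^\8(-\r,\r-2\e)}$, which are the two quantities on the right-hand side. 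The remainder term is bounded by $2^{-k}O\le 2(\e/\r)\,O$, and one uses the elementary monotonicity of $r\mapsto r|\ln r|^\a$ for small $r$ to get $\e/\r\le C\,\ell^\a(\e)/\ell^\a(\r)$, which produces the desired factor.

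The heart of the argument is the sum
\[
S\;:=\;\sum_{j=0}^{k-1} 2^{-(j+1)}\,\ell^\a(2^j\e),
\]
for which we claim $S\leq C\,\ell^\a(\e)$. Writing $a=|\ln\e|$, $b=\ln 2$ so that $\ell(2^j\e)=(a-jb)^{-1}$ for the relevant range, we split at $j^\ast\sim a/(2b)$: for $j\leq j^\ast$ the denominator $a-jb$ is comparable to $a$, hence $\ell^\a(2^j\e)\le 2^\a\ell^\a(\e)$ and the geometric factor $2^{-j}$ makes the partial sum $\le C\ell^\a(\e)$; for $j>j^\ast$, $2^{-j}\le \sqrt\e$, which kills any factor like $\ell^\a(\r)$. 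This yields the required $S\leq C\ell^\a(\e)$, so the sum in the identity contributes at most $CM\ell^\a(\e)/\ell^\a(\r)$. Combining and dividing by $\ell^\a(\e)/\ell^\a(\r)$, one takes the supremum over $\e\in(0,\r)$ to conclude.

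The main obstacle, and the only really delicate point, is verifying that the dyadic iteration stays inside the domain $(-\r,\r)$ without losing too many scales: the symmetry reduction is what handles this cleanly, and one must also check that choosing $k$ maximal with $2^k\e\lesssim\r$ still gives the correct coefficient $2^{-k}\sim\e/\r$ in the remainder. Once that bookkeeping is sorted, the rest is the quantitative comparison between $\e/\r$ and $\ell^\a(\e)/\ell^\a(\r)$, which is exactly the sort of semi-homogeneity encoded in Lemma \ref{prop1}.
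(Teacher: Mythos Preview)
Your proof is correct and takes essentially the same approach as the paper: the dyadic telescoping identity $\delta_\e u = 2^{-k}\delta_{2^k\e}u - \sum_{j=0}^{k-1}2^{-(j+1)}\delta_{2^j\e}^2 u$, the symmetry reduction to $x\in(-\rho,0]$, the bound $2^{-k}\leq C\e/\rho\leq C\ell^\a(\e)/\ell^\a(\rho)$ via Lemma~\ref{prop1}, and the convergent tail sum $\sum_j 2^{-j}\ell^{-\a}(2^{-j})<\infty$ are all exactly the ingredients the paper uses. The only cosmetic differences are that the paper chooses the iteration depth $i$ depending on $x$ (so that $x+2^i\e$ nearly reaches $\rho$) whereas you fix $k=\lfloor\log_2(\rho/\e)\rfloor$ independently of $x$, and the paper bounds the dyadic sum via the semi-homogeneity inequality $\ell(\lambda r)/\ell(r)\leq c/\ell(1/\lambda)$ directly rather than via your explicit split at $j^\ast\sim|\ln\e|/(2\ln 2)$.
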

\begin{proof}
Let $\rho>0$, $u\in C(-\r,\r)$, and assume, without loss of generality, that
\begin{align}\label{1a1}
[u]_{\mathcal L^{0}(-\r,\r)} +\ell^\a(\r)\sup_{\e\in(0,\r)}\ell^{-\a}(\e)\|\d_\e^2 u\|_{L^\8(-\r,\r-2\e)}\leq 1.        
\end{align}
Our goal is then to show that, for some $C>0$ and for all $\eps\in (0,\rho)$,
\begin{align}\label{1c1}
\ell^{-\a}(\e)\|\d_\e u\|_{L^\infty(-\rho,0]}\leq C\ell^{-\a}(\r).
\end{align}
The estimate on the remaining interval $[0,\r-\e)$ follows by considering a symmetric reasoning.

Fix $x\in (-\rho,0)$, $\eps\in (0,\rho)$, and let $i$ be a non-negative integer such that $x+2^i\e < \r \leq x+2^{i+1}\e$. By the definition of the finite difference,
\begin{align*}
\d_{2^{i-j}\e}u(x) - 2\d_{2^{i-1-j}\e} u(x) =\d_{2^{i-1-j}\e}^2 u(x)\qquad \text{ for } j=0,\ldots,i-1.
\end{align*}
Moreover, by \eqref{1a1},
\begin{align*}
&|\d_{2^i\e} u(x)|=|u(x+2^i\e)-u(x)|\leq [u]_{\cL^0(-\r,\r)}\leq 1,\\
&|\d_{2^{i-1-j}\e}^2 u(x)| \leq \|\d_{2^{i-1-j}\e}^2 u\|_{L^\8(-\r,\r-2\e)}\leq \ell^{-\a}(\r)\ell^{\a}(2^{i-1-j}\e).
\end{align*}

Using a telescopic sum and keeping in mind that, by the construction of $i$ and Lemma \ref{prop1}, $2^{-i}\leq 2\eps\rho^{-1}\leq 2\ell^\a(\eps \rho^{-1})\leq c\ell^{\alpha}(\eps)\ell^{-\alpha}(\rho)$ then
\begin{align*}
|\d_\e u(x)| &\leq 2^{-i}(|\d_{2^i\e}u(x)|+|\d_{2^i\e}u(x) - 2^i\d_\e u(x)|)\\
&\leq C\1\ell^\a(\e)\ell^{-\a}(\r) + \sum_{j=0}^{i-1} 2^{j-i}|\d_{2^{i-j}\e}u(x) - 2\d_{2^{i-1-j}\e} u(x)|\2\\
&\leq C\1 \ell^\a(\e)\ell^{-\a}(\r) + \sum_{j=0}^{i-1} 2^{j-i}|\d_{2^{i-1-j}\e}^2 u(x)|\2\\
&\leq C\ell^{-\a}(\r)\1 \ell^\a(\e) + \sum_{j=0}^{i-1} 2^{j-i}\ell^\a(2^{i-1-j}\e)\2\\
&\leq C\ell^\a(\e)\ell^{-\a}(\r) \1 1+ \sum_{j=1}^{i} 2^{-j} \ell^{-\a}(2^{-j})\2.
\end{align*}
Since, for $j$ sufficiently large, $\ell^{-\a}(2^{-j})=|\ln(2^{-j})|^\alpha=j^\alpha |\ln 2|^\alpha$ we have that the sum is uniformly bounded and \eqref{1c1} follows.
\end{proof}

\subsubsection{Log-Hölder bootstrap}

In this section we consider again $u:B_1(\W)\ss\R^N\to \R$ and denote the first and second order differences of size $\e>0$ and direction $e\in\p B_1$ by
\[
\d_{\e e} u(x) := u(x+\e e) - u(x), \qquad \d_{\e e}^2 u(x) := u(x+2\e e)-2u(x+\e e)+u(x).
\]

For local equations one can improve the regularity of the solution by considering difference quotients and iterating the lower order estimates. However, for non-local equations the tails of the solutions do not have an improvement in regularity and one must modify the strategy. The idea is then to multiply the function by a smooth cut-off function whose purpose is to ``hide the irregular part of the solution". The trade off we must pay for this step is that the equation for the difference quotient has an additional term in the right-hand side that depends on the cut-off. Nevertheless, the regularity of this term is guaranteed by a regularity assumption on the kernel.

Recall the definition of $\b$-regularity given in Definition \ref{def:reg}.

\begin{lemma}\label{lem:lin2}
Let $r,\b\in(0,1]$, $K\in L^\infty(B_1)$, $L_K$ be a $\b$-regular, operator satisfying the translation invariance condition \ref{itm:ti}, and the uniform ellipticity condition \ref{itm:uellip} with parameters $0<\l\leq\L$. Let $\eta \in C^\8_0(B_{5r})$ be such that $\eta=1$ in $B_{4r}$, $\e\in(0, r)$, and $e\in \p B_1$. If $u \in L^\8(B_{1+2r})\cap C^{Dini}_{loc}(B_{2r})$, then
\[
\|L_K \d_{\e e}(\eta u) - \d_{\e e}L_Ku\|_{L^\8(B_r)} \leq C\ell^\b(\e)\ell^{-1}(r)\|u\|_{L^\8(B_{1+2r})},
\]
for some $C>0$ depending on $\l$, $\L$, $N$, and the $\b$-regularity parameter of $L_K$.
\end{lemma}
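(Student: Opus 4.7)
The plan is to use translation invariance to rewrite the commutator $L_K\d_{\e e}(\eta u)-\d_{\e e}L_Ku$ as $L_K$ applied to a function supported away from $B_r$, then bound it directly via the $\beta$-regularity hypothesis. Because $K=K(y)$, the substitution $y\mapsto y+\e e$ inside the integral for $L_K u(x+\e e)$ yields the pointwise identity $\d_{\e e}L_Ku(x)=L_K\d_{\e e}u(x)$; together with linearity this gives
\begin{align*}
L_K\d_{\e e}(\eta u)-\d_{\e e}L_Ku \;=\; L_K\d_{\e e}((\eta-1)u) \;=:\; L_K\d_{\e e}h,
\end{align*}
where $h:=(\eta-1)u$ vanishes identically on $B_{4r}$, is bounded by $\|u\|_{L^\8(B_{1+2r})}$, and is extended by zero outside the support of $\eta$. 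For $x\in B_r$ and $\e\in(0,r)$ both $x$ and $x+\e e$ lie in $B_{2r}\subset B_{4r}$, so $\d_{\e e}h(x)=0$ and only the ``$y$'' part of the difference survives in the integral defining $L_K\d_{\e e}h(x)$.

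\textbf{Symmetrization.} Splitting that integral in two, substituting $y\mapsto y+\e e$ in the term involving $h(y+\e e)$, extending $K$ by zero outside $B_1$, and finally applying the symmetric change of variables $\xi:=y-x-\tfrac{\e}{2}e$, one rewrites
\begin{align*}
-L_K\d_{\e e}h(x) \;=\; \int_{\R^N} h\!\left(x+\tfrac{\e}{2}e+\xi\right)\!\left[\frac{K(\xi-\tfrac{\e}{2}e)}{|\xi-\tfrac{\e}{2}e|^N}-\frac{K(\xi+\tfrac{\e}{2}e)}{|\xi+\tfrac{\e}{2}e|^N}\right]d\xi.
\end{align*}
The bracketed factor is precisely the integrand appearing in Definition~\ref{def:reg} with $w=\e e$.

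\textbf{Bound via $\beta$-regularity.} Since $h$ vanishes on $B_{4r}$ and $|x+\tfrac{\e}{2}e|\le 3r/2$, the integrand is zero unless $|\xi|\ge 5r/2$; on that set, monotonicity of $\ell$ combined with semi-homogeneity (Lemma~\ref{prop1}) give $\ell(|\xi|)\ge \ell(5r/2)\ge c\,\ell(r)$, and since $\e\le r$ one has $5r/2\ge 2\e$, so this support lies inside $\R^N\setminus B_{2\e}$. Estimating $|h|\le\|u\|_{L^\8(B_{1+2r})}$ and inserting $\ell(|\xi|)/\ell(|\xi|)$,
\begin{align*}
|L_K\d_{\e e}h(x)| \;\le\; \frac{\|u\|_{L^\8(B_{1+2r})}}{c\,\ell(r)}\int_{\R^N\setminus B_{2\e}}\!\left|\frac{K(\xi-\tfrac{\e}{2}e)}{|\xi-\tfrac{\e}{2}e|^N}-\frac{K(\xi+\tfrac{\e}{2}e)}{|\xi+\tfrac{\e}{2}e|^N}\right|\ell(|\xi|)\,d\xi,
\end{align*}
and the $\beta$-regularity hypothesis bounds this last integral by $\L\,\ell^\beta(\e)$, which proves the claim. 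The only genuinely delicate point is the bookkeeping that puts the integrand into the symmetric form of Definition~\ref{def:reg} while keeping the support of $h$ strictly inside $\R^N\setminus B_{2\e}$; once this is arranged, no maximum principle, barrier, or iteration is required at this step.
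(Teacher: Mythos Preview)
Your argument is correct and follows essentially the same route as the paper: both reduce the commutator to an integral of $(1-\eta)u$ against the symmetric kernel difference $\frac{K(\xi-\e e/2)}{|\xi-\e e/2|^N}-\frac{K(\xi+\e e/2)}{|\xi+\e e/2|^N}$, then use the support condition to insert a factor $\ell(|\xi|)/\ell(r)$ and invoke $\beta$-regularity. One small slip: $h=(\eta-1)u$ equals $-u$ (not $0$) outside $\spt\eta$, but this is harmless since you only use $|h|\le C\|u\|_{L^\infty(B_{1+2r})}$ and $h|_{B_{4r}}=0$.
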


\begin{proof}
Without loss of generality, assume that $K = 0$ in $\R^N\sm B_1$. Let $x\in B_{r}$ be such that $x+\e e \in B_{2r}$.  Then, by the definition of $\eta$,
\begin{align*}
    L_K(\d_{\e e}(\eta u))(x) &= \int_{B_1(x)} \frac{u(x+\e e)-u(x)-(\eta u)(y+\e e)+(\eta u)(y)}{|y-x|^N}K(y-x)dy,\\
    &=  \int_{B_1(x)} \frac{u(x+\e e)-(\eta u)(y+\e e)}{|y-x|^N}K(y-x)dy - \int_{B_1(x)} \frac{u(x)-(\eta u)(y)}{|y-x|^N}K(y-x)dy.
\end{align*}
Using that $(1-\eta)$ vanishes in a neighborhood of $x$ and $x+\e e \in B_{2r}$, we have that
\begin{align}
    L_K\d_{\e e}(\eta u)(x) &= L_Ku(x+\e e) - L_Ku(x) + \int_{B_1(x)} \frac{((1-\eta)u)(y+\e e)-((1-\eta)u)(y)}{|y-x|^N}K(y-x)dy\notag\\
    &= \d_{\e e}L_Ku(x) + \int_{\R^N} ((1-\eta)u)(y)\1\frac{K(y-x-\e e)}{|y-x-\e e|^N} - \frac{K(y-x)}{|y-x|^N}\2dy\notag\\
    &= \d_{\e e}L_Ku(x) + \int_{B_{1+2r}} u(y)\bar K(x,y)dy,\label{eq2}
\end{align}
where 
\[
\bar K(x,y):= (1-\eta(y))\1\frac{K(y-x-\e e)}{|y-x-\e e|^N} - \frac{K(y-x)}{|y-x|^N}\2,\qquad x\in B_r,\ y\in \R^N.
\]
By the $\b$-regularity of $L_K$, we have that for any $x\in B_r$ (and using the change of variables $\xi=y-(x+(\e/2)e)$),
\begin{align}
\int_{\R^N} |\bar K(x,y)|dy
&\leq \int_{\R^N\backslash B_{4r}} \left|\frac{K(y-x-\e e)}{|y-x-\e e|^N} - \frac{K(y-x)}{|y-x|^N}\right|dy\notag
\\
&\leq C\ell^{-1}(r)\int_{\R^N\sm B_{2r}} 
\left|\frac{K(\xi-\frac{\e}{2}e)}{|\xi-\frac{\e}{2}e|^N} - \frac{K(\xi+\frac{\e}{2}e)}{|\xi+\frac{\e}{2}e|^N}\right| \ell(|\xi|)d\xi
\notag\\
&\leq C\ell^{-1}(r)\int_{\R^N\sm B_{2\e}} 
\left|\frac{K(\xi-\frac{\e}{2}e)}{|\xi-\frac{\e}{2}e|^N} - \frac{K(\xi+\frac{\e}{2}e)}{|\xi+\frac{\e}{2}e|^N}\right| \ell(|\xi|)d\xi \leq C\ell^\b(\e)\ell^{-1}(r).\label{eq3}
\end{align}
Therefore, by \eqref{eq2} and \eqref{eq3},
\begin{align*}
|L_K \d_{\e e}(\eta u)(x) - \d_{\e e}L_Ku(x)| \leq C\ell^\b(\e)\ell^{-1}(r)\|u\|_{L^\8(B_{1+2r})}
\end{align*}
for $x\in B_r$, as claimed.
\end{proof}

\begin{corollary}\label{cor1}
Let $\beta\in(0,1]$, $L_K$ be a translation invariant, $\b$-regular, uniformly elliptic operator with parameters $\l$ and $\L$, $r\in(0,r_0)$. There exist $\a,r_0\in(0,1)$ depending on $0<\l\leq\L$, and $N$ such that the following holds: Given $u \in L^\8(B_{1+2r})\cap C^{Dini}_{loc}(B_{2r})$, $e\in \p B_1$ and $\e\in(0, r/2)$ with $\d_{\e e}(L_Ku) \in L^\8(B_{2r})$, then
\begin{align*}
&\ell^{\a+\b}(r)\left\|\frac{\d_{\e e}^2u}{\ell^{\a+\b}(\e)}\right\|_{L^\8(B_{r/2})}\\
&\leq C\1\|u\|_{L^\8(B_{1+2r})}+\ell^\b(r)\left\|\frac{\d_{\e e}u}{\ell^{\b}(\e)}\right\|_{L^\8(B_{6r})}
 +\ell^{1+\b}(r)\left\|\frac{\d_{\e e}(L_Ku)}{\ell^\b(\e)}\right\|_{L^\8(B_{r})}
  \2
\end{align*}
for some $C>0$ depending on $\l$, $\L$, $N$, and the $\b$-regularity parameter of $L_K$.
\end{corollary}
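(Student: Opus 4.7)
The plan is to apply the lower-order estimate of Corollary~\ref{cor:mod_cont} to a localization of the first difference of $u$, and to recognize the second difference as the first difference of that localized first difference. Let $\eta\in C^\infty_0(B_{5r})$ with $\eta\equiv 1$ on $B_{4r}$ be the cutoff from Lemma~\ref{lem:lin2}, and set $v:=\d_{\e e}(\eta u)$. Since $\e<r/2$, for every $x\in B_{r/2}$ the three points $x$, $x+\e e$, $x+2\e e$ lie in $B_{3r/2}\subset B_{4r}$, where $\eta\equiv 1$; hence $\d_{\e e}^2 u(x)=v(x+\e e)-v(x)$. The task therefore reduces to controlling this single increment of $v$ for each $x\in B_{r/2}$.

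Two inputs feed Corollary~\ref{cor:mod_cont}. The discrete Leibniz rule
\[
v(y)=\eta(y+\e e)\,\d_{\e e}u(y)+u(y)\,\d_{\e e}\eta(y)
\]
shows that $v$ is supported in $B_{6r}$ and
\[
\|v\|_{L^\8(\R^N)}\le \|\d_{\e e}u\|_{L^\8(B_{6r})}+C\tfrac{\e}{r}\|u\|_{L^\8(B_{1+2r})}.
\]
On the other hand, Lemma~\ref{lem:lin2} (whose proof goes through on any ball inside $B_{3r/2}$, since there $\eta\equiv 1$ in a sufficient neighborhood of $x$ and $x+\e e$, and the integral involving $\bar K$ is estimated by the translation-invariant $\b$-regularity of $K$) gives
\[
\|L_Kv-\d_{\e e}L_Ku\|_{L^\8(B_{3r/2})}\le C\ell^{\b}(\e)\ell^{-1}(r)\|u\|_{L^\8(B_{1+2r})}.
\]

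Fix $x_0\in B_{r/2}$ and set $\tilde v(y):=v(x_0+y)$. By the translation invariance~\ref{itm:ti} of $L_K$ one has $L_K\tilde v(y)=L_Kv(x_0+y)$, so Corollary~\ref{cor:mod_cont} applied to $\tilde v$ at radius $r\in(0,r_0)$, evaluated at the scale $\r=2\e<r$, yields
\[
|v(x_0+\e e)-v(x_0)|\le[v]_{\cL^0(B_{2\e}(x_0))}\le C\ell^{\a}(\e)\ell^{-\a}(r)\bigl(\|v\|_{L^\8(B_{1+r}(x_0))}+\ell(r)\|L_Kv\|_{L^\8(B_r(x_0))}\bigr),
\]
with the same $\a=\a(\l,\L,N)$ of Lemma~\ref{meas_est}. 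Substituting the two bounds above,
\begin{align*}
|\d_{\e e}^2 u(x_0)|\le C\ell^{\a}(\e)\ell^{-\a}(r)\Bigl(&\|\d_{\e e}u\|_{L^\8(B_{6r})}+\ell(r)\|\d_{\e e}L_Ku\|_{L^\8(B_r)}\\
&+\bigl[\tfrac{\e}{r}+\ell^{\b}(\e)\bigr]\|u\|_{L^\8(B_{1+2r})}\Bigr).
\end{align*}

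To finish, divide by $\ell^{\a+\b}(\e)$, multiply by $\ell^{\a+\b}(r)$, and take the supremum over $x_0\in B_{r/2}$. The first two summands on the right become $\ell^{\b}(r)\|\d_{\e e}u/\ell^{\b}(\e)\|_{L^\8(B_{6r})}$ and $\ell^{1+\b}(r)\|\d_{\e e}L_Ku/\ell^{\b}(\e)\|_{L^\8(B_r)}$, exactly as in the statement. For the $\|u\|_{L^\8(B_{1+2r})}$ term one uses $\ell^{-\a}(r)\le\ell^{-(\a+\b)}(r)$ (since $\ell\le 1$) together with the elementary inequality $\e/r\le C\ell^{\b}(\e)/\ell^{\b}(r)$, valid for small $\e<r<r_0$ because $t\mapsto t|\ln t|^{\b}$ is monotone near $0$. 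The main subtlety is the origin of the decisive factor $\ell^{\b}(\e)$: it enters only through the $\b$-regularity of $K$ via the commutator estimate of Lemma~\ref{lem:lin2}; without it the cutoff error $[L_K,\d_{\e e}](\eta u)$ would contribute a term too large to yield the desired modulus, reducing the bootstrap to the linear regularity of Theorem~\ref{thm1}.
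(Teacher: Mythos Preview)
Your proof is correct and follows the same approach as the paper: localize with $\eta$, apply Corollary~\ref{cor:mod_cont} to $v=\d_{\e e}(\eta u)$ centered at each $x_0\in B_{r/2}$, and control $\|v\|_{L^\infty}$ and $L_Kv$ via the discrete product rule and Lemma~\ref{lem:lin2} respectively. One minor bookkeeping point: applying Corollary~\ref{cor:mod_cont} at radius $r$ leaves you with $\|\d_{\e e}L_Ku\|_{L^\infty(B_r(x_0))}\le\|\d_{\e e}L_Ku\|_{L^\infty(B_{3r/2})}$ rather than the $B_r$ you wrote; the paper avoids this by applying the corollary at radius $r/2$ (still accommodating $\rho=\e<r/2$), so that $B_{r/2}(x_0)\subset B_r$ matches the norm in the statement.
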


\begin{proof}

Let $\eta \in C^\8_0(B_{5r})$ be such that $\eta=1$ in $B_{4r}$ and $\|D\eta\|_{L^\8} \leq Cr^{-1}$. Notice first that for any $\a \in(0,1)$ and $\e\in(0,r/2)$
\[
\left\|\frac{\d_{\e e}^2 u}{\ell^\a(\e)}\right\|_{L^\8(B_{r/2})} \leq \sup_{\substack{x\in B_{r/2}\\\r\in (0,r/2)}} \ell^{-\a}(\r)[\d_{\e e}(\eta u)]_{\mathcal L^0(B_\r(x))}.
\]
To check this identity one just has to set $\r=\e$ and compute the oscillation between the center of the ball and its boundary.

We now choose $\a\in(0,1)$ according to Corollary \ref{cor:mod_cont} such that for any $x\in B_{r/2}$
\begin{align*}
\ell^{\a}(r)\sup_{\r\in (0,r/2)}\ell^{-\a}(\r) [\d_{\e e}(\eta u)]_{\mathcal L^0(B_{\r}(x))} 
&\leq C\left(\|\d_{\e e}(\eta u)\|_{L^\8(B_{1+r})}+\ell(r)\|L_K \d_{\e e}(\eta u)\|_{L^\8(B_{r})}\right).
\end{align*}

By Lemma \ref{lem:lin2},
\begin{align*}
\|L_K \d_{\e e}(\eta u)\|_{L^\infty(B_{r})} \leq C\ell^\b(\e)\ell^{-1}(r)\|u\|_{L^\8(B_{1+2r})}+\|\d_{\e e}L_Ku\|_{L^\infty(B_{r})}.
\end{align*}
By the discrete product rule
\begin{align*}
\|\d_{\e e}(\eta u)\|_{L^\8(B_{1+r})} &\leq \sup_{x \in B_{1+r}}|\eta(x+\e e)(u(x+\e e) - u(x))| + C\e r^{-1}\|u\|_{L^\8(B_{1+r})},\\
&\leq \|\d_{\e e}u\|_{L^\8(B_{6r})} + C\ell^{\b}(\e)\ell^{-\b}(r)\|u\|_{L^\8(B_{1+r})}.
\end{align*}
By putting together these inequalities we obtain the desired estimate.
\end{proof}

By combining this corollary with the extrapolation Lemma \ref{lem:stein} we obtain the following estimate whenever the right-hand side $L_Ku \in \mathcal L^\b(B_{4r})$ with $\b\in(0,1]$.

\begin{corollary}\label{cor2}
Let $\b\in(0,1]$, $L_K$ be a translation invariant, $\b$-regular, uniformly elliptic operator with parameters $0<\l\leq \L$, $r\in(0,r_0)$. There exist $r_0,\a\in(0,1)$ depending on $0<\l\leq\L$, and $N$ such that the following holds: Given $u \in L^\8(B_{1+r})\cap \mathcal L^{\b}(B_{r})$ with $L_Ku \in \cL^\b(B_{r})$, then $u \in \mathcal L^{\b+\a}(B_{r/10})$ with the estimate
\begin{align*}
\ell^{\b+\a}(r)[u]_{\mathcal L^{\b+\a}(B_{r/60})}
\leq C(\|u\|_{L^\8(B_{1+r})}+
\ell^\b(r)[u]_{\mathcal L^\b(B_{r})}
+\ell^{1+\b}(r)[L_Ku]_{\mathcal L^\b(B_{r})}
)
\end{align*}
for some $C>0$ depending on $\l$, $\L$, $N$, and the $\b$-regularity parameter of $L_K$.
\end{corollary}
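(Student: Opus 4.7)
The strategy is to combine the second-difference estimate of Corollary~\ref{cor1} with the one-dimensional extrapolation Lemma~\ref{lem:stein}. Corollary~\ref{cor1} bounds $\d^2_{\e e}u/\ell^{\b+\a}(\e)$ in terms of first differences of $u$ and of $L_K u$; Lemma~\ref{lem:stein}, applied along each direction $e\in\p B_1$, then converts the resulting second-difference control back into a first-difference control with the same exponent $\a+\b$. Taking the supremum over $\e>0$ and $e\in\p B_1$ recovers $[u]_{\cL^{\a+\b}}$ on the target ball.

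Concretely, first I would apply Corollary~\ref{cor1} with its radius parameter replaced by $r':=r/10$, chosen so that the enlarged balls $B_{1+2r'}$, $B_{6r'}$, $B_{r'}$ appearing on its right-hand side all sit inside the sets on which the data is prescribed, namely $B_{1+r}$ and $B_r$. The first-difference quotients appearing there satisfy
\[
\|\d_{\e e} u\|_{L^\8(B_{6r'})}\leq \ell^\b(\e)[u]_{\cL^\b(B_r)},\qquad \|\d_{\e e}(L_Ku)\|_{L^\8(B_{r'})}\leq \ell^\b(\e)[L_Ku]_{\cL^\b(B_r)},
\]
for all $\e\in(0,r/20)$, since the shifted balls stay inside $B_r$. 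By the semi-homogeneity of $\ell$ (Lemma~\ref{prop1}), $\ell(r')$ is comparable to $\ell(r)$, so the $\ell$-weights at scale $r'$ can be replaced by their counterparts at scale $r$ at the cost of adjusting $C$. This yields
\begin{align*}
\ell^{\a+\b}(r)\sup_{\e\in(0,r/20)}\ell^{-(\a+\b)}(\e)\|\d^2_{\e e}u\|_{L^\8(B_{r/20})}\leq C\cdot\mathrm{RHS},
\end{align*}
where $\mathrm{RHS}$ denotes the right-hand side of the target estimate.

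Second, I would freeze $x_0\in B_{r/60}$ and $e\in\p B_1$, and consider the slice $g(t):=u(x_0+te)$ on $(-\r,\r)$ with $\r:=r/40$, so that for $t\in(-\r,\r-2\e)$ and $\e\in(0,\r)$ the points $x_0+te$, $x_0+(t+\e)e$, $x_0+(t+2\e)e$ all stay in $B_{r/20}$. The previous bound rewrites as a uniform control of $\ell^{\a+\b}(r)\|\d^2_\e g\|_{L^\8(-\r,\r-2\e)}/\ell^{\a+\b}(\e)$. Applying Lemma~\ref{lem:stein} with exponent $\a+\b$ to $g$ and absorbing $[g]_{\cL^0(-\r,\r)}\leq 2\|u\|_{L^\8(B_{1+r})}$ into $\mathrm{RHS}$, one obtains the same type of bound on the first differences $\|\d_\e g\|_{L^\8(-\r,\r-\e)}/\ell^{\a+\b}(\e)$. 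Taking suprema over $x_0$, $e$, and $\e$ then controls $|u(x)-u(y)|/\ell^{\a+\b}(|x-y|)$ for all $x,y\in B_{r/60}$ with $|x-y|<\r$. The few remaining pairs, with $|x-y|\geq\r$, satisfy $\ell^{\a+\b}(|x-y|)\geq \ell^{\a+\b}(\r)$, which by semi-homogeneity is comparable to $\ell^{\a+\b}(r)$; they are therefore controlled trivially by $|u(x)-u(y)|\leq 2\|u\|_{L^\8(B_{1+r})}$, a term that is already part of $\mathrm{RHS}$.

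The main obstacle is the careful bookkeeping of nested radii: the right-hand side of Corollary~\ref{cor1} involves seminorms on balls (e.g., $B_{6r}$) several times larger than the target $B_{r/2}$, so one must choose the auxiliary scale $r'$ small enough for all the enlarged balls to fit inside $B_r$, while simultaneously anchoring the $\ell$-weights at scale $r$ through semi-homogeneity. Once these inclusions are arranged, the proof is essentially a direct composition of Corollary~\ref{cor1} with a directional application of Lemma~\ref{lem:stein}.
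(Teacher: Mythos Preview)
Your proposal is correct and follows essentially the same approach as the paper: combine Corollary~\ref{cor1} with the extrapolation Lemma~\ref{lem:stein} along each direction, using the $\cL^\b$ control on $u$ and $L_Ku$ to bound the first-difference terms on the right of Corollary~\ref{cor1}. The paper applies the two ingredients in the reverse order (Lemma~\ref{lem:stein} first, then Corollary~\ref{cor1}) and, by choosing the slice interval $(-r/5,r/5)$ with $x,y\in B_{r/10}$, avoids the separate large-$|x-y|$ case you treat explicitly; but these are bookkeeping variations, not a different argument.
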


\begin{proof}
Let $x,y\in B_{r/10}$ with $x\neq y$, $e:=\frac{y-x}{|y-x|}$, and let $v(\tau):=u(x+\tau e)$ such that for $\t\in (-r/5,r/5)$ it holds that $x+\t e\in B_{r/2}$. In particular, $y = x+\t e$ for $\t = |y-x| \in (0,r/5)$. By Lemma \ref{lem:stein}
\begin{align*}
\ell^{\b+\a}(r)\frac{|u(x)-u(y)|}{\ell^{\a+\b}(|x-y|)}
&\leq  C\ell^{\b+\a}(r)\sup_{\e\in (0,r/5)} \left\|\frac{\d_\e v}{\ell^{\b+\a}(\e)}\right\|_{L^\8(-r/5,r/5-\e)},\\
&\leq C\1 [v]_{\mathcal L^{0}(-r/5,r/5)} +\ell^{\b+\a}(r)\sup_{\e\in(0,r/5)}\left\|\frac{\d_{\e}^2 v}{\ell^{\b+\a}(\e)}\right\|_{L^\8(-r/5,r/5-2\e)}\2,\\
&\leq C\1 [u]_{\mathcal L^{0}(B_{r/2})} +\ell^{\b+\a}(r)\sup_{\substack{e\in \p B_1\\\e\in(0,r/5)}}\left\|\frac{\d_{\e e}^2 u}{\ell^{\b+\a}(\e)}\right\|_{L^\8(B_{r/2})}\2.
\end{align*}
We conclude the estimate by applying now Corollary \ref{cor1} and using that $u \in \mathcal L^\b(B_{r})$ and $L_Ku \in \mathcal L^\b(B_{r})$ to bound the incremental quotients in the right-hand side.
\end{proof}

Now we iterate this result to obtain higher regularity.

\begin{corollary}\label{cor:high}
There exist $r_0,\a,c\in(0,1)$ depending on $0<\l\leq\L$, and $N$ such that the following holds: Let $L_K$ be a translation invariant, $1$-regular, uniformly elliptic operator with parameters $\l$ and $\L$, and $r\in(0,r_0)$. Given $u \in L^\8(B_{1+r})\cap C^{Dini}_{loc}(B_{r})$ such that $L_Ku \in \cL^1(B_{r})$, then $u \in \mathcal L^{1+\a}(B_{cr})$ with the estimate
\[
\ell^{1+\a}(r)[u]_{\mathcal L^{1+\a}(B_{cr})} \leq C\1\|u\|_{L^\8(B_{1+r})}+\|L_Ku\|^{(1)}_{\mathcal L^0(B_{r})}+\ell^{2}(r)[L_Ku]_{\mathcal L^1(B_{r})}\2
\]
for some $C>0$ depending on $\l$, $\L$, $N$, and the $\b$-regularity parameter of $L_K$.
\end{corollary}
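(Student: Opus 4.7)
The plan is to iterate Corollary \ref{cor2} starting from the base estimate supplied by Theorem \ref{thm1}. Since $\ell\leq 1$, the $1$-regularity of $L_K$ trivially yields $\b$-regularity for every $\b\in(0,1]$ (the same integrand is bounded by $\L\ell\leq \L\ell^\b$), so Corollary \ref{cor2} is available at every intermediate exponent in that range.

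First, I would apply Theorem \ref{thm1} on $B_r$ to obtain some $\a_0=\a_0(\l,\L,N)\in(0,1)$ together with the base estimate
\begin{align*}
\|u\|^{(0)}_{\cL^{\a_0}(B_r)}\leq C\1\|u\|_{L^\8(B_{1+r})}+\|L_Ku\|^{(1)}_{\cL^0(B_r)}\2,
\end{align*}
which in particular controls $\ell^{\a_0}(r')[u]_{\cL^{\a_0}(B_{r'})}$ for all sub-balls of $B_r$ at a scale comparable to $r$.

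Next, let $\a_*=\a_*(\l,\L,N)\in(0,1)$ denote the gain provided by Corollary \ref{cor2}. Define $\b_0:=\a_0$ and $\b_{k+1}:=\min(\b_k+\a_*,1)$, and choose $M=M(\l,\L,N)\in\N$ so that $\b_M=1$. With nested interior radii $r_k:=r/60^k$ (up to a fixed constant coming from the first step to match the input geometry of Corollary \ref{cor2}), I would inductively apply Corollary \ref{cor2} at step $k$ with radius $r_k$ and exponent $\b_k$ to produce $u\in\cL^{\b_{k+1}}(B_{r_{k+1}})$. After $M$ iterations one has $u\in\cL^1(B_{r_M})$, and one further application with $\b=1$ yields $u\in\cL^{1+\a_*}(B_{cr})$ with $c:=60^{-(M+1)}$ and $\a:=\a_*$.

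The core of the argument is collapsing the accumulated right-hand sides into the three quantities appearing in the conclusion. The outer $L^\8$ norm is harmless since $B_{1+r_k}\ss B_{1+r}$, and the previous-step seminorms $\ell^{\b_k}(r_k)[u]_{\cL^{\b_k}(B_{r_k})}$ are absorbed by the inductive hypothesis at each step. For the inhomogeneous terms I would use that, for any $\b\in(0,1]$ and $r'\leq r$, the elementary bound $|L_Ku(x)-L_Ku(y)|\leq \ell(|x-y|)[L_Ku]_{\cL^1(B_r)}$ together with the semi-homogeneity of $\ell$ (Lemma \ref{prop1}) gives
\begin{align*}
\ell^{1+\b}(r')[L_Ku]_{\cL^{\b}(B_{r'})} \leq C\ell^{1+\b}(r')\ell^{1-\b}(r')[L_Ku]_{\cL^1(B_r)} \leq C\ell^2(r)[L_Ku]_{\cL^1(B_r)},
\end{align*}
while the $L^\8$ bound on $L_Ku$, which enters only through the initial application of Theorem \ref{thm1}, is controlled by $\|L_Ku\|^{(1)}_{\cL^0(B_r)}$.

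The main obstacle is precisely this bookkeeping: each iteration slightly reshuffles the weighted seminorms on the right-hand side and multiplies the constant. The redeeming point is that $M$ depends only on $\l$, $\L$, and $N$, so the accumulated constant remains universal and the final estimate takes the form claimed, with $r_0$ and $c$ emerging from the iteration parameters.
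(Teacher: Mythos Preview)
Your proposal is correct and follows essentially the same route as the paper: start from the base $\cL^{\a_0}$ estimate of Theorem~\ref{thm1}, then iterate Corollary~\ref{cor2} a bounded number of times (the paper phrases this as $\lfloor 1/\a\rfloor$ steps after choosing $\a$ small enough to serve both as the initial exponent and the gain), and absorb the $L_Ku$ contributions via the same $\ell^{1+\b}\cdot\ell^{1-\b}=\ell^2$ interpolation you wrote down. The only point worth making explicit is that passing from the weight $\ell^{1+\a}(r_M)$ produced by the last application back to $\ell^{1+\a}(r)$ uses semi-homogeneity (Lemma~\ref{prop1}) with $\l=60^{-(M+1)}$ fixed, which is what makes the final constant universal.
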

\begin{proof}
In the following, $c\in(0,1)$ and $C>0$ are possibly different constants independent of $u$.

By Lemma \ref{prop2} and Theorem \ref{thm1}, there is $\alpha\in(0,1)$ such that
\begin{align}\label{eq8}
\ell^{\a}(r)[u]_{\mathcal L^{\a}(B_{r/2})} \leq C[u]^{(0)}_{\mathcal L^\a(B_{r})} \leq C\left(\|u\|_{\mathcal L^\infty(B_{1+r})}+\|L_Ku\|^{(1)}_{\mathcal L^0(B_{2r})}\right)
\end{align}
Then, using Corollary \ref{cor2} (with $\beta=\alpha$ by making $\a$ smaller if necessary) and \eqref{eq8}, 
\begin{align*}
\ell^{2\a}(r)[u]_{\mathcal L^{2\a}(B_{cr})} 
&\leq C(\|u\|_{L^\8(B_{1+r})}+\ell^\a(r)[u]_{\mathcal L^\a(B_{r})}+\ell^{1+\a}(r)[L_Ku]_{\mathcal L^\a(B_{r})})\\
&\leq C(\|u\|_{L^\8(B_{1+r})}+\|L_Ku\|^{(1)}_{\mathcal L^0(B_{r})}+\ell^{1+\a}(r)[L_Ku]_{\mathcal L^\a(B_{r})}),
\end{align*}
where $c>0$ is sufficiently small. Note that
\begin{align*}
\ell^{1+\a}(r)[L_Ku]_{\mathcal L^\a(B_r)}
&=\ell^{1+\a}(r) \sup_{\substack{x,y\in B_r\\x\neq y}}\frac{|L_Ku(x)-L_Ku(y)|}{\ell^\a(|x-y|)}\\
&=\ell^{1+\a}(r) \sup_{\substack{x,y\in B_r\\x\neq y}}\frac{|L_Ku(x)-L_Ku(y)|}{\ell(|x-y|)}\ell^{1-\alpha}(|x-y|)\\
&\leq\ell^{2}(r) \sup_{\substack{x,y\in B_r\\x\neq y}}\frac{|L_Ku(x)-L_Ku(y)|}{\ell(|x-y|)}
=\ell^{2}(r)[L_Ku]_{\mathcal L^1(B_r)}.
\end{align*}
Then 
\begin{align*}
\ell^{2\a}(r)[u]_{\mathcal L^{2\a}(B_{cr})} 
\leq C(\|u\|_{L^\8(B_{1+r})}+\|L_Ku\|^{(1)}_{\mathcal L^0(B_{r})}+\ell^{2}(r)[L_Ku]_{\mathcal L^1(B_r)}).
\end{align*}
Now we can repeat this procedure using Corollary \ref{cor2} with $\beta=2\alpha$, and we obtain that 
\begin{align*}
\ell^{3\a}(r)[u]_{\mathcal L^{3\a}(B_{cr})} 
\leq C(\|u\|_{L^\8(B_{1+r})}+\|L_Ku\|^{(1)}_{\mathcal L^0(B_{r})}+\ell^{2}(r)[L_Ku]_{\mathcal L^1(B_r)})
\end{align*}
(with a smaller value for $c>0$).  Repeating this procedure $\lfloor 1/\a \rfloor$ times, we obtain the claim.
\end{proof}

\begin{proof}[Proof of Theorem \ref{thm2}]
The claim follows from Corollary \ref{cor:high} with a covering argument using Lemma~\ref{prop2}.   We give the details for completeness.  Let $\mu>1$, $r_0>0$, $x\in \Omega$, and $r\in(0,r_0)$ such that $B_{\mu r}(x)\subset \Omega$. In the following $C$ denotes possibly different constants independent of $u$.

By Corollary \ref{cor:high}, there is $c\in(0,1)$ such that
\[
\ell^{1+\a}(r)[u]_{\mathcal L^{1+\a}(B_{cr}(x))} \leq C(\|u\|_{L^\8(B_1(\Omega))}+\ell(r)\|L_Ku\|_{L^\8(B_{r}(x))}+\ell^{2}(r)[L_Ku]_{\mathcal L^1(B_r(x))}).
\]
By Lemma \ref{prop2} (adjusting $\mu$ and $r_0$), 
\begin{align*}
\|u\|^{(0)}_{\mathcal L^{1+\a}(\W)} 
&\leq C\sup_{\substack{B_{\m r}(x)\ss\W\\r\in(0,r_0)}} \1\|u\|_{L^\8(B_{cr}(x))} + \ell^{1+\a}(r)[u]_{\mathcal L^{1+\a}(B_{cr}(x))}\2\\
&\leq C\|u\|_{L^\8(B_{1}(\W))} +C\sup_{\substack{B_{\m r}(x)\ss\W\\r\in(0,r_0)}}\1\ell(r)\|L_Ku\|_{L^\8(B_{r}(x))}+\ell^{2}(r)[L_Ku]_{\mathcal L^1(B_r(x))}\2.
\end{align*}
The claim now follows by a final application of Lemma \ref{prop2} to $L_Ku$.
\end{proof}

\subsection{Boundary regularity for zero boundary data}\label{brwzbd}

In this section the goal is to establish the following boundary regularity estimate with zero complementary data.

\begin{theorem}\label{thm:bdry}
Let $\W\ss\R^N$ be a bounded open set with a uniform exterior sphere condition, $L_K$ be a uniformly elliptic operator in $\W$ with parameters $0<\l\leq\L$, $u \in L^\8(B_1(\W))\cap C(\overline\W) \cap C^{Dini}_{loc}(\W)$ with $u=0$ in $B_1(\W)\sm \W$, and $L_K u \in L^\8(\W)$. Then there are $\a=\a(\l,\L,N,\Omega)\in(0,1)$ and $C=C(N,\l,\L,\Omega)>0$ such that $u\in \cL^\a(B_1(\Omega))$ and 
\begin{align}\label{umc}
[u]_{\cL^\a(B_1(\Omega))}\leq C \|L_Ku\|_{L^\8(\W)}.
\end{align}
\end{theorem}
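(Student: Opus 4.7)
The plan is to combine the interior estimate of Theorem \ref{thm1} with a boundary H\"older modulus of continuity obtained via the method of barriers, glued together by means of Lemma \ref{prop4}. In broad strokes: a barrier argument at each boundary point produces a pointwise bound of the form $|u(x)| \leq C\|L_Ku\|_{L^\infty(\W)}\,\ell^\alpha(d(x))$ which controls $u$ across $\partial\W$; the interior estimate controls the increments of $u$ at a scale smaller than the distance to the boundary; these two pieces of information combine into a uniform $\mathcal L^\alpha$ modulus on $B_1(\W)$.

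\textbf{Step 1 (global $L^\infty$ bound).} First I would apply the non-homogeneous maximum principle (Lemma \ref{mp}) to obtain
\[
\|u\|_{L^\infty(B_1(\W))}=\|u\|_{L^\infty(\W)} \leq C\|(L_Ku)_+\|_{L^\infty(\W)}, \qquad\text{and similarly for }-u,
\]
so that $\|u\|_{L^\infty(B_1(\W))} \leq C\|L_Ku\|_{L^\infty(\W)}$, using here the zero exterior condition.

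\textbf{Step 2 (barrier construction).} For each $x_0\in\partial\W$, the uniform exterior ball condition provides $B_\rho(y_0)\subset \R^N\setminus \W$ with $x_0\in \partial B_\rho(y_0)$. Following the construction in \cite{MR3995092} (to which the paper explicitly refers in Section \ref{brwzbd}), I would produce a barrier $\varphi=\varphi_{x_0}\in C^{Dini}_{loc}(\W)\cap L^\infty(\R^N)$ with
\[
\varphi\geq 0 \text{ in } \R^N,\qquad \varphi\equiv 0 \text{ in } B_\rho(y_0),\qquad L_K\varphi \geq 1 \text{ in } \W\cap B_r(x_0),\qquad \varphi(x)\leq C\,\ell^\alpha(|x-x_0|)
\]
for some universal $\alpha\in(0,1)$, $r>0$, and $C>0$ depending only on $N,\lambda,\L,\W$. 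Since the uniform-ellipticity constants alone control the action of $L_K$ on any such barrier (through the Pucci-type inequality $L_K\varphi\geq L_K^-\varphi$ in terms of the extremal operator associated to $\lambda,\L$), the barriers of \cite{MR3995092} built against the extremal operator carry over directly to $L_K$.

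\textbf{Step 3 (boundary H\"older bound).} Using Lemma \ref{prop:elip} in combination with the non-homogeneous maximum principle, for every $x_0\in\partial\W$ I would compare $u$ with $\pm C\|L_Ku\|_{L^\infty(\W)}\,\varphi_{x_0}$ on $\W\cap B_r(x_0)$, completed by the $L^\infty$ bound from Step 1 outside this ball (after possibly enlarging $C$). The outcome is
\[
|u(x)| \leq C\|L_Ku\|_{L^\infty(\W)}\,\ell^{\alpha}(|x-x_0|) \qquad \text{for every } x_0\in\partial\W,\ x\in\overline\W,
\]
which translates into
\[
\sup_{\substack{x\in\partial \W\\ y\in \overline\W\setminus\{x\}}} \frac{|u(x)-u(y)|}{\ell^\alpha(|x-y|)} \leq C\|L_Ku\|_{L^\infty(\W)},
\]
using $u(x)=0$ for $x\in\partial\W$.

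\textbf{Step 4 (gluing with interior regularity and conclusion).} Theorem \ref{thm1} gives, possibly after lowering $\alpha$,
\[
\|u\|^{(0)}_{\mathcal L^\alpha(\W)} \leq C\bigl(\|u\|_{L^\infty(B_1(\W))}+\|L_Ku\|^{(1)}_{\mathcal L^0(\W)}\bigr) \leq C\|L_Ku\|_{L^\infty(\W)},
\]
the last inequality using Step 1 together with the elementary bound $\|f\|^{(1)}_{\mathcal L^0(\W)}\leq C\|f\|_{L^\infty(\W)}$. Plugging this together with the boundary bound of Step 3 into Lemma \ref{prop4} (applied with $\beta=0$ and $\omega_0=C\|L_Ku\|_{L^\infty(\W)}\ell^\alpha$) yields, after possibly reducing $\alpha$ once more, a genuine $\ell^\alpha$-modulus of continuity for $u$ on $\overline\W$, and hence on $B_1(\W)$ since $u\equiv 0$ on $B_1(\W)\setminus\W$. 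Rearranging gives exactly \eqref{umc}.

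\textbf{Main obstacle.} The only non-routine step is the barrier construction in Step 2: the log-H\"older rate $\ell^\alpha$ replaces the usual power-type decay and the operator $L_K$ is only assumed uniformly elliptic (no translation invariance, no $1$-regularity), so the candidate barriers cannot be computed by explicit kernel formulas. The key is that ellipticity alone reduces the verification of $L_K\varphi\geq 1$ to an estimate against the extremal operator with constants $\lambda,\L$; for this extremal operator the barrier computations from \cite{MR3995092} apply, which is precisely the observation alluded to in Section \ref{brwzbd}.
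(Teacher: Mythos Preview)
Your proposal is correct and follows essentially the same route as the paper: the $L^\infty$ bound via Lemma \ref{mp}, a barrier at each boundary point yielding $|u(x)|\leq C\|L_Ku\|_{L^\infty(\W)}\ell^\alpha(d(x))$, and then the combination of Theorem \ref{thm1} with Lemma \ref{prop4}. The only cosmetic difference is that the paper does not invoke extremal operators but instead directly verifies in Lemma \ref{sub} that the barrier $\varphi(x)=\ell^\alpha((|x|-r)_+)$ from \cite{MR3995092} satisfies $L_K\varphi\geq\delta$ for general uniformly elliptic $K$, which is precisely the computation your Pucci-type reduction would produce.
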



Our proof relies on the method of barriers. The following construction extends the result of \cite[Lemma 5.3]{MR3995092} to the case $K\not\equiv 1$.

\begin{lemma}\label{sub}
Let $L_K$ satisfy the uniform ellipticity condition \ref{itm:uellip} with respect to $0<\l\leq\L$. There are $\a,r_0,\d\in(0,1)$ such that for $r\in(0,r_0]$ and $\varphi(x) := \ell^\a((|x|-r)_+)$, it holds that 
\begin{align*}
L_K\varphi(x) \geq \d \qquad \text{ for any $x \in B_{r+r^2}\sm B_{r}$.}
\end{align*}
\end{lemma}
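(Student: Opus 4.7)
The plan is to split the domain $B_1(x)$ into the interior piece $B_r$ (where $\varphi$ vanishes) and the exterior piece $B_1(x)\setminus B_r$ (where $\varphi$ is smooth), use uniform ellipticity to lower-bound a logarithmically divergent ``interior gain'' from the first piece, and control the ``exterior loss'' from the second piece by the concavity and subadditivity of $\ell^\alpha$. Throughout, write $t:=|x|-r\in(0,r^2]$; then $\varphi(x)=\ell^\alpha(t)$, and for $r_0$ small enough we have $B_r\ss B_1(x)$.

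First, I would lower-bound the interior gain. Since $\varphi\equiv 0$ on $B_r$,
\[
\varphi(x)\int_{B_r}\frac{K(x,y-x)}{|y-x|^N}\,dy\ \geq\ \lambda\,\varphi(x)\int_{B_r}\frac{dy}{|y-x|^N}.
\]
A polar-coordinate calculation centred at $x$ (or the substitution $y=ru$) shows that $\int_{B_r}dy/|y-x|^N\geq c_N\ln(r/t)$ for a dimensional constant $c_N>0$; since $t\leq r^2$ forces $\ln(r/t)\geq\tfrac12|\ln t|=\tfrac12\ell^{-1}(t)$, the interior gain is at least $(c_N\lambda/2)\,\ell^{\alpha-1}(t)$, which diverges as $t\to 0^+$.

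Second, I would bound the exterior piece from below by $-\Lambda\int_{B_1(x)\sm B_r}|\varphi(x)-\varphi(y)|/|y-x|^N\,dy$ and estimate the integral in three zones. In the \emph{near} zone $|y-x|<t/2$, $\varphi$ is smooth and the mean-value bound with $\psi'(s)=\alpha\ell^{\alpha+1}(s-r)/(s-r)$ gives a contribution of order $\alpha\ell^{\alpha+1}(t)$, which is bounded. In the \emph{middle} zone $t/2<|y-x|<1/10$, the subadditivity of $\ell^\alpha$ (concavity plus $\ell^\alpha(0)=0$) yields $|\varphi(x)-\varphi(y)|\leq C\ell^\alpha(|y-x|)$; combined with the identity $\int_0^\rho \ell^{1+\alpha}(s)/s\,ds=\ell^\alpha(\rho)/\alpha$ this contributes $O\bigl(\ell^{\alpha-1}(t)/(1-\alpha)\bigr)$. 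The \emph{far} zone $|y-x|\geq 1/10$ is bounded since $\varphi$ is bounded and the kernel is integrable away from the singularity. Thus the exterior loss is at most $\bigl(C_N\Lambda/(1-\alpha)\bigr)\ell^{\alpha-1}(t)+C_N'\Lambda$.

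Third, I would assemble
\[
L_K\varphi(x)\ \geq\ \left(\frac{c_N\lambda}{2}-\frac{C_N\Lambda}{1-\alpha}\right)\ell^{\alpha-1}(t)\ -\ C_N'\Lambda,
\]
choose $\alpha\in(0,1)$ so small (depending on $N,\lambda,\Lambda$) that the coefficient of $\ell^{\alpha-1}(t)$ is strictly positive, and then use $t\leq r_0^2$ to obtain $\ell^{\alpha-1}(t)\geq\ell^{\alpha-1}(r_0^2)>0$; shrinking $r_0$ if necessary absorbs the additive constant $C_N'\Lambda$ and produces a uniform $\delta>0$.

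The hard part is step three: making the coefficient of $\ell^{\alpha-1}(t)$ positive uniformly in the ellipticity ratio $\Lambda/\lambda$. The naive interior-vs-exterior comparison only succeeds when $\Lambda/\lambda$ is not too large, because both the interior gain and the middle-zone exterior loss grow at the same rate $\ell^{\alpha-1}(t)$. The refinement is to reorganise the exterior piece as $\lambda A_+^D-\Lambda A_-$, where $A_+^D$ is the positive contribution from $T_-:=\{r<|y|<|x|\}\cap B_1(x)$ and $A_-$ from $T_+:=\{|y|>|x|\}\cap B_1(x)$, and to observe that by the radial structure of $\varphi$ and the concavity of $\psi$ near $r$ the two quantities $A_+^D$ and $A_-$ differ only by lower-order terms; this allows one to absorb most of the exterior loss into the interior gain and to leave only a $(\Lambda-\lambda)$-weighted piece that is dominated by $\lambda$ times the interior gain once $\alpha$ is chosen small enough.
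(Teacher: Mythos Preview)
Your overall architecture (interior gain from $B_r$ versus exterior loss) is the same as the paper's, and your interior estimate and your near/far pieces of the exterior estimate are fine. The gap is in the middle zone, and your proposed ``refinement'' does not close it.

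In the middle zone you bound $|\varphi(x)-\varphi(y)|\leq C\ell^\a(|y-x|)$ and then integrate $\ell^\a(\rho)/\rho$ to obtain a loss of size $\tfrac{C\Lambda}{1-\a}\ell^{\a-1}(t)$. This coefficient does \emph{not} tend to zero as $\a\to 0^+$, so the balance $c\lambda - C\Lambda/(1-\a)$ cannot be made positive for general ellipticity ratio $\Lambda/\lambda$, exactly as you noticed. The refinement you suggest, writing the exterior as $\lambda A_+^D-\Lambda A_-$ with $A_+^D$ coming from the thin shell $T_-=\{r<|y|<|x|\}$ and claiming $A_+^D$ and $A_-$ agree to lower order, is not correct: a direct estimate (mean value for $|y-x|<t/2$, and the thin-annulus geometry for $|y-x|\geq t/2$) gives $A_+^D=O(\ell^\a(t))$, which is \emph{bounded}, whereas $A_-$ carries the full divergence $\sim\ell^{\a-1}(t)$. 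So there is no cancellation between $A_+^D$ and $A_-$ to exploit.

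The fix the paper uses is different and much simpler: do not replace the signed difference $\varphi(y)-\varphi(x)=\ell^\a(|y|-r)-\ell^\a(d)$ by a modulus of continuity. On the region $|y-x|\geq d$ one has $|y|-r\leq 2|y-x|$, so after a change to polar coordinates the loss is controlled by
\[
\int_d^{\r_0}\frac{\ell^\a(2\rho)-\ell^\a(d)}{\rho}\,d\rho
= \underbrace{\int_d^{\r_0}\frac{\ell^\a(2\rho)}{\rho}\,d\rho}_{\sim \frac{1}{1-\a}\ell^{\a-1}(d)}
\; - \; \underbrace{\ell^\a(d)\int_d^{\r_0}\frac{d\rho}{\rho}}_{\sim \ell^{\a-1}(d)}
\;\leq\; C+\frac{\a}{1-\a}\,\ell^{\a-1}(d).
\]
The subtraction of the constant $\ell^\a(d)$ in the numerator is precisely what produces the extra factor $\a$. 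Together with the near piece (your mean-value argument, which already gives a bound $\alpha\,\ell^{\a+1}(d)\omega_N$), the total exterior loss is $C\Lambda\a\,\ell^{\a-1}(d)+C$, and one concludes $L_K\varphi(x)\geq(c\lambda-C\Lambda\a)\ell^{\a-1}(d)-C$, which becomes $\geq\delta$ for $\a$ and then $r_0$ small, independently of $\Lambda/\lambda$. In short: keep $\varphi(x)$ inside the integrand rather than absorbing it into a modulus; that is where the missing factor of $\a$ comes from.
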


\begin{proof}
The exponent $\a\in(0,1)$ will be conveniently fixed at the end of the proof. Let $x \in B_{r+r^2}\sm B_{r}$, $d:=|x|-r$, and assume that $r_0$ is sufficiently small such that $B_{r+d} \ss B_1(x)$. Then
\begin{align*}\label{eq3}
L_K\varphi(x) = \1\int_{B_{r+d}} + \int_{B_1(x) \sm B_{r+d}}\2\frac{\varphi(x)-\varphi(y)}{|y-x|^N}K(x,y-x)dy.
\end{align*}

For the first term the integrand is positive by monotonicty (recall that $x\in \partial B_{r+d}$) so that
\[
\int_{B_{r+d}} \frac{\varphi(x)-\varphi(y)}{|y-x|^N}K(x,y-x)dy \geq \l\ell^\a(d)\int_{B_{r}}\frac{dy}{|y-x|^N} \geq c\l\ell^{\a-1}(d),
\]
where we used that $\varphi(y)=0$ for $y\in B_r$ and the last inequality follows by Lemma 5.2 in \cite{MR3995092} (we also included a different proof in Lemma \ref{lem:tobias} below).

For the second term the integrand is negative instead and we now split the integration using
\begin{align*}
&\int_{B_1(x) \sm B_{r+d}}\frac{\varphi(x)-\varphi(y)}{|y-x|^N}K(x,y-x)dy
\geq -\L\int_{B_1(x) \sm B_{r+d}}\frac{\ell^\a(|y|-r)-\ell^\a(d)}{|y-x|^N}dy,\\
&\quad= -\L\1\int_{B_d(x) \sm B_{r+d}} + \int_{(B_1(x) \sm B_{r+d})\sm B_d(x)}\2\frac{\ell^\a(|y|-r)-\ell^\a(d)}{|y-x|^N}dy.
\end{align*}
For the second term we use the triangle inequality $|y|-r \leq |y-x|+d\leq 2|y-x|$ (because $y\in \R^N\sm B_d(x)$) and get that
\begin{align*}
\int_{(B_1(x) \sm B_{r+d})\sm B_d(x)}\frac{\ell^\a(|y|-r)-\ell^\a(d)}{|y-x|^N}dy 
&\leq \int_{(B_1(x) \sm B_{r+d})\sm B_d(x)}\frac{\ell^\a(2|y-x|)-\ell^\a(d)}{|y-x|^N}dy \\
&\leq \int_{B_1\sm B_d}\frac{\ell^\a(2|y|)-\ell^\a(d)}{|y|^N}dy= \w_N \int_d^1 \frac{\ell^\a(2\r)-\ell^\a(d)}{\r}d\r.
\end{align*}
By truncating the integral at $\r_0/2$ we pick up a constant term and can substitute $\ell(2\r) = |\ln(2\r)|^{-1}$, thus
\begin{align*}
\int_d^1 \frac{\ell^\a(2\r)-\ell^\a(d)}{\r}d\r &\leq C + \int_d^{\r_0/2} \frac{|\ln(2\r)|^{-\a}-|\ln d|^{-\a}}{\r}d\r\\
&\leq C + \frac{1}{1-\a}|\ln(2d)|^{1-\a} - |\ln d|^{1-\a}\\
&\leq C + \frac{1}{1-\a}|\ln 2|^{1-\a}+\frac{\a}{1-\a}|\ln d|^{1-\a}\leq \frac{2\a}{1-\a}|\ln d|^{1-\a}.
\end{align*}
In the second to last step we used the sub-additivity of $z\mapsto z^{1-\a}$ and finally absorbed the constants by assuming that $d<r^2<r_0^2$ is sufficiently small (depending on $\a$ to be fixed).

For the remaining term we use that, by the concavity of $z \mapsto \ell^\a(z)$,
\begin{align*}
    \ell^\a(|y|-r)-\ell^\a(d)\leq \alpha \frac{\ell^{\a+1}(d)}{d}(|y|-|x|)
\end{align*}
and, by the triangle inequality $|y|-|x|\leq |y-x|$,
\[
\int_{B_d(x) \sm B_{r+d}} \frac{\ell^\a(|y|-r)-\ell^\a(d)}{|y-x|^N}dy \leq \a\frac{\ell^{\a+1}(d)}{d}\int_{B_d(x) \sm B_{r+d}} \frac{|y|-|x|}{|y-x|^N}dy \leq \a\ell^{\a+1}(d)\w_N.
\]

In conclusion $L_K\varphi(x) \geq (c\l-C\L\a)\ell^{\a-1}(d) \geq \d\ell^{\a-1}(d) \geq \d$ for $\a \in(0,1)$ conveniently small in order for $c\l$ to absorb the term $C\L\a$.
\end{proof}

\begin{proof}[Proof of Theorem \ref{thm:bdry}]

By the maximum principle given in Lemma \ref{mp} we get that
\[
\|u\|_{L^\8(B_1(\W))} \leq C_0\|L_Ku\|_{L^\8(\W)}.
\]

Let $\alpha',r_0,\delta\in(0,1)$ and $\varphi(x):=\ell^{\alpha'}((|x|-r)_+)$ as in Lemma \ref{sub}. Let $x_0\in\partial \Omega$ and without loss of generality, assume that $\overline{B_{r_0}}\cap \overline{\Omega}=\{x_0\}$. Let $\W':=\Omega\cap B_{r_0^2+r_0}$ and 
\begin{align*}
\psi := \|L_K u\|_{L^\infty(\Omega)}\1\frac{C_0}{\ell^{\alpha'}(r^2)}+\frac{1}{\d}\2\varphi,
\end{align*}
such that
\[
\begin{cases}
L_K u \leq L_K\psi \text{ in } \W',\\
u\leq \psi \text{ on } B_1(\W)\sm \W'.
\end{cases}
\]
By the maximum principle it then follows that $u\leq \psi$ in $B_1(\W)$. In general, once we apply this reasoning to every possible boundary point, we obtain a modulus of continuity at the boundary of the form
\[
|u(x)| \leq C\|L_Ku\|_{L^\8(\W)}\ell^{\alpha'}(d(x)).
\]

Finally, we conclude the proof by using the interior regularity estimate Theorem \ref{thm1} and the Lemma \ref{prop4}.
\end{proof}

\section{Dirichlet boundary value problem}\label{dbvp}

It is convenient at this point to define the Banach spaces and norms that are controlled by the a priori estimates of our solutions with zero boundary data.

\begin{definition}\label{XY}
For $\a\in(0,1)$, define the Banach spaces 
\begin{align*}
 &\mathcal X^\alpha(\Omega) := \{u : \R^N\to\R\ | \ \|u\|_{\mathcal X^\alpha(\Omega)}<\8 \text{ and }u=0\text{ in }\R^N\backslash \W\},\\
 &\mathcal Y(\Omega) := \{f:\W\to\R \ | \ \|f\|_{\mathcal Y(\Omega)}<\8\},
\end{align*}
endowed with the norms
\begin{align*}
\|u\|_{\mathcal X^\alpha(\Omega)} := \|u\|_{\cL^{1+\a}(\W)}^{(0)} + [u]_{\cL^\a(\R^N)},\qquad     
\|f\|_{\mathcal Y} := \|f\|_{\cL^{1}(\W)}^{(1)}+\|f\|_{L^\infty(\W)}.
\end{align*}
\end{definition}

\begin{remark}\label{rema}
If $\W\ss\R^N$ is a bounded open set with a uniform exterior ball condition, $L_K$ is a translation invariant (see \ref{itm:ti}), $1$-regular with constant $\L>0$ (see Definition \ref{def:reg}), uniformly elliptic operator with parameters $0<\l\leq \L$ (see \ref{itm:uellip}), and $u\in \mathcal X^\alpha(\Omega)$, then the combination of the a priori estimates given by Theorem~\ref{thm2} and Theorem~\ref{thm:bdry} together with the non-homogeneous maximum principle (Lemma \ref{mp}) implies that
\[
\|u\|_{\mathcal X^\alpha(\Omega)} \leq C\|L_Ku\|_{\mathcal Y(\Omega)}
\]
for some $\a=\a(\l,\L,N,\Omega) \in (0,1)$ and some $C=C(N,\l,\L,\W)>0$.
\end{remark}

Our first goal is to establish the following result.

\begin{theorem}\label{thm:loglap}
Let $\W\ss\R^N$ be a bounded Lipschitz open set with a uniform exterior ball condition, and $L_K$ be a translation invariant uniformly elliptic operator  with respect to $0<\l\leq \L$ and assume that
\begin{align*}
|D K(y)|\leq \L|y|^{-1}\qquad \text{ for every $y\in\R^N\sm\{0\}$}.
\end{align*}
Then, there is $\alpha=\alpha(N,\l,\L,\Omega)>0$ such that the following holds: Given $f\in \mathcal Y(\Omega)$, there is a unique solution $u \in \mathcal X^\alpha(\Omega)$ of 
\begin{align*}
L_Ku=f \text{ in } \W,\qquad 
u=0 \text{ on } B_1(\W)\sm\W.
\end{align*}
Moreover, $\|u\|_{\mathcal X^\alpha(\Omega)} \leq C\|f\|_{\mathcal Y(\Omega)}$ for some $C=C(N,\l,\L,\Omega)$.
\end{theorem}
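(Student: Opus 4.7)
The plan has three pieces: uniqueness via the maximum principle, the a priori estimate already assembled in Remark \ref{rema}, and existence obtained by marrying the weak-existence result of \cite[Theorem 1.6]{feulefack2021nonlocal} (whose hypothesis is precisely the gradient bound on $K$) to our regularity estimates, and then closing under approximation.

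For \emph{uniqueness}, if $u_1,u_2\in\mathcal X^\alpha(\Omega)$ both solve the equation, the difference $w:=u_1-u_2$ lies in $\mathcal X^\alpha(\Omega)\subset C^{Dini}_{loc}(\Omega)$, satisfies $L_Kw=0$ in $\Omega$, and vanishes on $\R^N\sm\Omega$; applying Lemma \ref{lem:mp} to both $w$ and $-w$ gives $w\equiv 0$. The a priori bound from Remark \ref{rema} then forces $\|u\|_{\mathcal X^\alpha(\Omega)}\leq C\|f\|_{\mathcal Y(\Omega)}$ whenever a classical solution exists.

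For \emph{existence}, I would first handle $f\in C^\infty_c(\Omega)$ and then approximate. The gradient bound combined with Lemma \ref{lem:alphareg} provides $1$-regularity of $L_K$ in the sense of Definition \ref{def:reg}, which is exactly the hypothesis used in \cite[Theorem 1.6]{feulefack2021nonlocal} to produce a weak (variational) solution $u$. If the operator in that reference is framed with integration over all of $\R^N$, extend $K$ past $B_1$ by a small, smooth, compactly supported tail; Remark \ref{othermp} guarantees that the maximum principle survives such an extension, and the resulting lower-order remainder can be absorbed via the uniform a priori bound. With $u$ in hand, first apply Theorem \ref{thm:bdry} (using $u=0$ on $\R^N\sm\Omega$) to obtain global log-H\"older regularity $u\in\cL^\a(B_1(\Omega))$, and then apply the interior higher-order estimate Theorem \ref{thm2} to upgrade to $u\in\cL^{1+\a}_{loc}(\Omega)$. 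Since $u\in C^{Dini}_{loc}(\Omega)$, $L_Ku(x)$ is now defined pointwise and equals $f(x)$ almost everywhere, hence everywhere by continuity, so $u\in\mathcal X^\alpha(\Omega)$ is a classical solution. For general $f\in\mathcal Y(\Omega)$, take smooth $f_n$ with $\|f_n\|_{\mathcal Y(\Omega)}\leq C\|f\|_{\mathcal Y(\Omega)}$, obtain classical solutions $u_n$ uniformly bounded in $\mathcal X^\alpha(\Omega)$, extract a locally uniformly convergent subsequence via Arzel\`a--Ascoli through the log-H\"older modulus, and pass to the limit in the integrals defining $L_K$ by dominated convergence.

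The \emph{main obstacle} is the bridge between the variational framework of \cite{feulefack2021nonlocal} and the pointwise framework we need here: one must match the functional setting (reconciling the finite range of $L_K$ with a full-space kernel, for which Remark \ref{othermp} is tailor-made), verify that the weak solution supplied by \cite{feulefack2021nonlocal} is the same object that our regularity estimates see, and finally confirm that the classical representative indeed solves $L_Ku=f$ pointwise in $\Omega$ rather than merely in a weaker sense.
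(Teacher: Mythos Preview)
Your overall architecture---uniqueness from Lemma \ref{lem:mp}, the a priori bound from Remark \ref{rema}, and existence by approximation of $f$---matches the paper, but there is a genuine gap precisely at the point you yourself flag as ``the main obstacle'' and then leave unresolved. Theorems \ref{thm:bdry} and \ref{thm2} are \emph{a priori} estimates: both take as hypothesis that $u\in C^{Dini}_{loc}(\Omega)$ (and $u\in C(\overline\Omega)$ for the former), so that $L_Ku$ is already defined pointwise. They do not upgrade a merely variational solution to a classical one. Hence invoking \cite[Theorem 1.6]{feulefack2021nonlocal} with the original kernel, obtaining a weak solution $u$, and then ``applying Theorem \ref{thm:bdry}'' is circular as written; you have not explained why the weak $u$ lies in $C^{Dini}_{loc}(\Omega)\cap C(\overline\Omega)$.

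The paper closes this gap by mollifying the \emph{kernel} as well: it replaces $K$ by smooth $K_i\in C^\infty_c(\R^N)$ supported in $B_{1+1/i}$, so that \cite[Theorem 1.6]{feulefack2021nonlocal} in its smooth setting ($m=\infty$) delivers $u_i\in C^\infty_{loc}(\Omega)$ outright; continuity up to $\overline\Omega$ is then obtained separately from the barrier of Lemma \ref{sub} and a weak comparison principle. Only at that point do the a priori estimates legitimately give $\|u_i\|_{\mathcal X^\alpha(\Omega)}\leq C(\|f_i\|_{\mathcal Y(\Omega)}+\|R_iu_i\|_{\mathcal Y(\Omega)})$, with $R_i$ the tail integral over $\R^N\setminus B_1$. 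The \emph{shrinking} support of $K_i$ is what makes Lemma \ref{lem:Ri} work: the $\mathcal X^\alpha$-portion of $\|R_iu_i\|_{\mathcal Y}$ carries a coefficient $1/i$ and is absorbed on the left, while the residual $\|u_i\|_{L^\infty}$-portion is controlled by the maximum principle for the extended kernel (Remark \ref{othermp}). Your alternative of a \emph{fixed} small tail on $K$ would not yield this vanishing coefficient and still does not address why the solution it produces is Dini continuous. Finally, the limit $L_{K_i}u_i\to L_Ku$ is handled by a dedicated Lemma \ref{lem:conv}, since both kernel and solution vary simultaneously; a bare dominated-convergence appeal is not enough.
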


In order to take advantage of the already established existence theory in \cite{feulefack2021nonlocal} we need to consider smooth kernels and right-hand sides. This can be achieved by a mollification argument detailed next.

Let $L_K$ be a translation invariant uniformly elliptic operator with parameters $0<\lambda\leq \Lambda$ and let $K_i\in C^\infty_c(\R^N)$ be a mollification of $K$ such that, for all $i\in\N$,
\begin{align}
\frac{\lambda}{2}&\leq K_i(y)\quad \text{ for $y\in B_1$,}\quad 0\leq K_i(y)\leq \Lambda \quad \text{ for $y\in\R^N$,}\label{Kibds}\\
K_i&=0\qquad \text{ in }\R^N\backslash B_{1+\frac{1}{i}}(0),\label{Kisop}\\
K_i&\to K\quad \text{ a.e in $\R^N$ and in $L^1(B_1)$ as $i\to\infty$},\label{Ki}\\
\int_{B_{2}}&\left|D K_i\right|\, dy<C\quad \text{ for all  $i\in \N$ and for some $C>0$}\notag.
\end{align}
In particular, we have that
\begin{align}
    \int_{B_{2}\backslash B_{\frac{9}{10}}}&\left|D \left(\frac{K_i(y)}{|y|^N}\right)\right|\, dy<C\quad \text{ for all  $i\in \N$ and for some $C>0$}\label{Ki2}.
\end{align}

For $\alpha\in(0,1)$ and $u\in {\mathcal X}^\a(\Omega)$, let
\begin{align}\label{Ri}
R_{i}u(x):=\int_{\R^N\backslash B_1}\frac{u(x)-u(x+y)}{|y|^N}K_i(y)\, dy,\qquad x\in\Omega.
\end{align}

\begin{lemma}\label{lem:Ri}
Let $u\in \mathcal X^\alpha(\Omega)$, then, passing to a subsequence,
\begin{align*}
\|R_i u\|_{\mathcal Y(\Omega)}\leq \frac{\|u\|_{\mathcal X^\alpha(\Omega)}}{i}+C \|u\|_{L^\infty(\Omega)}\qquad \text{ for all }i\in\N
\end{align*}
and for some constant $C>0.$
\end{lemma}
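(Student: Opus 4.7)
The plan is to decompose $R_iu$ as a multiplication operator plus a correlation, and estimate the two pieces separately. Introduce $\widetilde K_i(y):=K_i(y)|y|^{-N}\chi_{\R^N\setminus B_1}(y)$ and the constant $a_i:=\int_{\R^N}\widetilde K_i$, so that
\[
R_iu(x)=a_i\,u(x)-\int_{\R^N}u(z)\,\widetilde K_i(z-x)\,dz.
\]
Since $\widetilde K_i$ is supported in the thin annulus $B_{1+1/i}\setminus B_1$, where $|y|^{-N}\leq 1$ and $K_i\leq\Lambda$, a direct area estimate gives $a_i\leq\Lambda\omega_N\ln(1+1/i)\leq C/i$, and hence $\|R_iu\|_{L^\infty(\Omega)}\leq 2a_i\|u\|_{L^\infty(\Omega)}\leq C\|u\|_{L^\infty(\Omega)}/i$, which already controls the $L^\infty$ part of the $\mathcal Y$-norm.

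For the Hölder-type seminorm in $\|\cdot\|_{\mathcal Y(\Omega)}$, fix $x_1,x_2\in\Omega$, set $h:=x_2-x_1$, and use
\[
R_iu(x_1)-R_iu(x_2)=a_i\bigl(u(x_1)-u(x_2)\bigr)+\int_{\R^N}u(z)\bigl[\widetilde K_i(z-x_2)-\widetilde K_i(z-x_1)\bigr]dz.
\]
For the first summand the interior regularity
$|u(x_1)-u(x_2)|\leq [u]^{(0)}_{\mathcal L^{1+\alpha}(\Omega)}\ell^{1+\alpha}(|h|)/\ell^{1+\alpha}(d(x_1,x_2))$,
combined with $a_i\leq C/i$, leaves, after multiplying by $\ell^2(d(x_1,x_2))/\ell(|h|)$, a factor $\ell^\alpha(|h|)\ell^{1-\alpha}(d(x_1,x_2))$ that is uniformly bounded since $\ell$ is bounded; hence this summand contributes at most $(C/i)\|u\|_{\mathcal X^\alpha(\Omega)}$ to the seminorm.

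The crux of the proof is the second summand, and the main obstacle is that $\widetilde K_i$ has a jump across $\{|y|=1\}$ and is not in $W^{1,1}(\R^N)$, so one cannot differentiate it directly. The target is the uniform $L^1$ modulus-of-continuity estimate
\[
\int_{\R^N}\bigl|\widetilde K_i(z-x_1)-\widetilde K_i(z-x_2)\bigr|\,dz\leq C|h|
\]
with $C$ independent of $i$. For $|h|\leq 1/10$, split the $z$-integration into: (i) the region where $z-x_1$ and $z-x_2$ both lie in $\{|y|>1\}$, where $\widetilde K_i$ coincides with the smooth function $\psi_i(y):=K_i(y)|y|^{-N}$; applying the fundamental theorem of calculus to $\psi_i$ along the segment from $z-x_2$ to $z-x_1$ (which stays inside $B_2\setminus B_{9/10}$ for $|h|\leq 1/10$) and invoking the uniform bound \eqref{Ki2} yields a contribution $\leq C|h|$; and (ii) the ``boundary layer'' where exactly one of $|z-x_1|,|z-x_2|$ exceeds $1$, contained in $B_1(x_1)\triangle B_1(x_2)$ of measure $O(|h|)$, where $|\widetilde K_i|\leq\Lambda$ gives a contribution $\leq C|h|$. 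Multiplying by $\|u\|_{L^\infty(\Omega)}$ then gives $\leq C|h|\,\|u\|_{L^\infty(\Omega)}$ for the second summand; the additional factor $\ell^2(d(x_1,x_2))/\ell(|h|)$ is controlled because $|h|/\ell(|h|)$ is uniformly bounded on bounded sets and $\ell^2(d)$ is bounded, producing a contribution of at most $C\|u\|_{L^\infty(\Omega)}$ to the seminorm. The complementary range $|h|>1/10$ is handled by the trivial bound $|R_iu(x_1)-R_iu(x_2)|\leq 2\|R_iu\|_{L^\infty(\Omega)}\leq C\|u\|_{L^\infty(\Omega)}/i$, since $\ell(|h|)$ is bounded below there.

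Combining the estimates yields $\|R_iu\|_{\mathcal Y(\Omega)}\leq (C/i)\|u\|_{\mathcal X^\alpha(\Omega)}+C\|u\|_{L^\infty(\Omega)}$; the passage to a subsequence asserted in the lemma is the cosmetic step of selecting $i_k\geq Ck$ so that $C/i_k\leq 1/k$, and then relabeling.
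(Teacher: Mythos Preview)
Your proof is correct and follows essentially the same approach as the paper: both decompose $R_iu$ into a small multiple of $u$ minus a convolution with the thin-annulus kernel, bound the multiplier by $O(1/i)$ via the annulus volume, and establish the seminorm estimate by proving an $L^1$ translation bound on the kernel via the fundamental theorem of calculus on the smooth part (using \eqref{Ki2}) together with a measure bound on the symmetric difference $B_1(x_1)\triangle B_1(x_2)$. Your treatment is a bit more explicit about the weight $\ell^2(d(x_1,x_2))$ and quantifies $a_i\leq C/i$ rather than writing $o(1)$, but there is no substantive difference.
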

\begin{proof}
By \eqref{Kibds} and \eqref{Kisop}, we have that
\begin{align}\label{Ktozero}
\int_{\R^N\backslash B_1} K_i(y)\, dy\leq \Lambda |B_{1+\frac{1}{i}}\backslash B_1|=o(1)\qquad \text{ as $i\to\infty$.}    
\end{align}
 Then,
\begin{align*}
    \|R_i u\|_{L^\infty(\Omega)}
    \leq 2\|u\|_{L^\infty(\Omega)}\int_{\R^N\backslash B_1} K_i(y)\, dy\leq o(1)\|u\|_{\mathcal X^\a(\Omega)}\qquad \text{ as }i\to\infty.
\end{align*}
Moreover, observe that, by \eqref{Ktozero},
\begin{align*}
R_iu(x)=u(x)o(1)-\int_{\R^N}\frac{K_i(y-x)}{|y-x|^N}\chi_{\omega_i(x)}u(y)\, dy,\qquad x\in\Omega,\ \omega_i(x):=B_{1+\frac{1}{i}}(x)\backslash B_1(x).
\end{align*}
as $i\to\infty$.  Therefore, for $x,z\in \Omega$,
\begin{align}\label{cl}
    &\frac{|R_i u(x)-R_i u(z)|}{\ell(|x-z|)}
    \leq o(1)\|u\|_{{\mathcal X}^\alpha}
    +\frac{\|u\|_{L^\infty(\Omega)}}{\ell(|x-z|)}\int_{\R^N}\left|\frac{K_i(y-x)}{|y-x|^N}\chi_{\omega_i(x)}-\frac{K_i(y-z)}{|y-z|^N}\chi_{\omega_i(z)}\right|\, dy
\end{align}
as $i\to\infty$. Finally, we claim that \eqref{Ki2} implies that 
\begin{align}\label{annest}
   \frac{1}{\ell(|x-z|)}\int_{\R^N}\left|\frac{K_i(y-x)}{|y-x|^N}\chi_{\omega_i(x)}-\frac{K_i(y-z)}{|y-z|^N}\chi_{\omega_i(z)}\right|\, dy<C
\end{align}
for all $i\in \N$ and $x,z\in \Omega.$  To see this, it suffices to consider $x,z\in \Omega$ such that $|x-z|<\frac{1}{10}$. Let $h_i:B_2\backslash \{0\}\to \R$ be given by $h_i(\zeta):=\frac{K_i(\zeta)}{|\zeta|^N}$. Then, for all $i\in\N$,
\begin{align*}
    &\frac{1}{\ell(|x-z|)}\int_{\omega_i(x)\cap \omega_i(z)}\left|\frac{K_i(y-x)}{|y-x|^N}\chi_{\omega_i(x)}(y)-\frac{K_i(y-z)}{|y-z|^N}\chi_{\omega_i(z)}(y)\right|\, dy\\
    &=\frac{1}{\ell(|x-z|)}\int_{\omega_i(x)\cap \omega_i(z)}\left|h_i(y-x)-h_i(y-z)\right|\, dy\\
    &\leq\frac{|x-z|}{\ell(|x-z|)}\int_{\omega_i(x)\cap \omega_i(z)}\int_0^1\left|D h_i(s(y-x)+(1-s)(y-z))\right|\, ds\, dy\\
    &\leq\frac{|x-z|}{\ell(|x-z|)}\int_{B_2\backslash B_{\frac{9}{10}}}\left|D h_i(y)\right|\, dy
     <C'
\end{align*}
for some $C'>0$; where we used \eqref{Ki2} and the fact that $s(y-x)+(1-s)(y-z)\in B_2\backslash B_{\frac{9}{10}}$ for all $s\in(0,1)$ and $y\in \omega_i(x)\cap \omega_i(z)$.  On the other hand,
\begin{align*}
    &\frac{1}{\ell(|x-z|)}\int_{\omega_i(x)\backslash \omega_i(z)}\left|\frac{K_i(y-x)}{|y-x|^N}\chi_{\omega_i(x)}(y)-\frac{K_i(y-z)}{|y-z|^N}\chi_{\omega_i(z)}(y)\right|\, dy\\
    &\leq \frac{\Lambda}{\ell(|x-z|)}|\omega_i(x)\backslash \omega_i(z)|<C''
\end{align*}
for some $C''>0$.  The same can be argued for the integral over $\{\omega_i(z)\backslash \omega_i(x)\}$, and \eqref{annest} follows. This ends the proof.
\end{proof}
 
 Recall that $d(x):=\dist(x,\partial \Omega)$ and $d(x,y):=\min(d(x),d(y))$.
 \begin{lemma}\label{lem:conv}
 Let $u \in C(\overline\W) \cap \cL^{1+\a}(\W)$ and $(u_{i}) \subset \cL^{1+\a/2}_{loc}(\W)\cap C(\overline \W)$ be such that $u_{i}\to u$ uniformly in $\overline\W$ and
 \begin{align}\label{l}
\lim_{i\to\infty}\|u_i-u\|_{\cL^{1+\a/2}(\Omega)}^{(0)}=\sup_{\substack{x,z\in \Omega\\x\neq y}}\ell^{1+\a/2}(d(x,z))\frac{|u_i(x)-u(x)-u_i(z)+u(z)|}{\ell^{1+\a/2}(|x-z|)}=0.     
 \end{align}
 Then,
\begin{align*}
L_{K_i}u_{i}\to L_{K}u\qquad \text{ locally uniformly in $\Omega$ as $i\to\infty$}.
\end{align*}
 \end{lemma}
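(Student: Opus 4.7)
The plan is to show uniform convergence on any compact $V\ss\W$ via the decomposition
\[
L_{K_i}u_i(x) - L_{K}u(x) = A_i(x) + B_i(x),
\]
where $A_i$ isolates the change of the function (keeping the new kernel) and $B_i$ isolates the change of the kernel (keeping the limit function):
\begin{align*}
A_i(x) &:= \int_{B_1(x)} \frac{(u_i-u)(x) - (u_i-u)(y)}{|y-x|^N}\,K_i(y-x)\,dy,\\
B_i(x) &:= \int_{B_1(x)} \frac{u(x) - u(y)}{|y-x|^N}\,(K_i - K)(y-x)\,dy.
\end{align*}
Fix $V\ss\W$ compact, set $d_0 := \tfrac12\dist(V,\p\W)>0$, and extend $u,u_i$ by zero outside $\W$ in accordance with the ambient setup.

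For $A_i$, I would split the integration at radius $d_0$. On $B_{d_0}(x)\ss\W$, one has $d(x,y)\geq d_0$ for every $y$ in the domain, so the weighted semi-norm gives
\[
|(u_i-u)(x)-(u_i-u)(y)| \leq \ell^{-(1+\a/2)}(d_0)\,[u_i-u]^{(0)}_{\cL^{1+\a/2}(\W)}\,\ell^{1+\a/2}(|x-y|);
\]
combined with $K_i\leq\L$ and $\int_0^{d_0}\r^{-1}\ell^{1+\a/2}(\r)\,d\r \leq C(\a)$, this yields a bound proportional to $[u_i-u]^{(0)}_{\cL^{1+\a/2}(\W)}$. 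On $B_1(x)\sm B_{d_0}(x)$, the integrand is bounded by $2\L\,\|u_i-u\|_{L^\8(\R^N)}\,|y-x|^{-N}$, producing a multiple of $\|u_i-u\|_{L^\8}$. Both factors vanish as $i\to\8$ by the hypothesis \eqref{l} and uniform convergence of $u_i\to u$.

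For $B_i$, the approach is an $\e/2$ argument in which the splitting scale $r\in(0,d_0)$ is chosen before the index $i$. On $B_r(x)$, the $\cL^{1+\a}(\W)$ regularity of $u$ and the bound $|K_i-K|\leq 2\L$ yield
\[
\int_{B_r(x)} \frac{|u(x)-u(y)|}{|y-x|^N}\,|K_i-K|(y-x)\,dy \leq C\,\|u\|_{\cL^{1+\a}(\W)}\,\ell^\a(r),
\]
which can be made smaller than $\e/2$ by choosing $r$ small, \emph{independently} of $i$. On $B_1(x)\sm B_r(x)$, the singular factor is replaced by $r^{-N}$, and the remaining integral is dominated by $2\|u\|_{L^\8}\,r^{-N}\,\|K_i-K\|_{L^1(B_1)}$; by \eqref{Ki}, this is less than $\e/2$ for $i$ sufficiently large, uniformly for $x\in V$.

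The main (mild) technical point is that in $B_i$ one cannot simply make the full kernel difference pointwise small, since $K_i\to K$ only in $L^1$; this is exactly why the splitting scale $r$ must be decoupled from $i$, with the singular-part smallness coming from the regularity of $u$ rather than from the kernel convergence. A second subtlety is that parts of $B_1(x)$ may lie outside $\W$, but because $u_i=u=0$ there and all constants involve the $L^\8(\R^N)$ norm, this causes no trouble; and the uniformity in $x\in V$ is secured by the compactness-based lower bound $d(x)\geq 2d_0$.
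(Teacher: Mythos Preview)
Your proposal is correct and follows essentially the same route as the paper. Your decomposition $A_i+B_i$ coincides with the paper's $(I_i+J_i)+H_i$: the $A_i$ term is treated identically (split at a fixed radius comparable to $\dist(V,\p\W)$, use the weighted $\cL^{1+\a/2}$ semi-norm on the inner piece and uniform convergence on the outer one), while for $B_i$ the paper simply invokes dominated convergence whereas you unfold that argument into an explicit $\e/2$ splitting using $\|K_i-K\|_{L^1(B_1)}\to0$; both yield the same uniform-in-$x$ conclusion.
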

 \begin{proof}
 Let $U\subset\subset \Omega$. By dominated convergence,
 \begin{align*}
H_i(x):=\int_{B_1}\left((u(x)-u(x+y))(K_i(y)-K(y))\right)|y|^{-N}\, dy=o(1)\qquad \text{ as }i\to\infty \text{ uniformly in $U$.}
 \end{align*}
Let $\delta:=\dist(U,\partial \Omega)/2.$ Then, for $x\in U,$
\begin{align*}
I_i(x)
&:=\int_{B_\delta}\left((u_i(x)-u(x)-u_i(x+y)+u(x+y))K_i(y)\right)|y|^{-N}\, dy\\
&\leq \Lambda\|u_i-u\|_{\cL^{1+\a/2}(\Omega)}^{(0)}\int_{B_\delta}\ell^{-1-\a/2}(d(x,x+y))\frac{\ell^{1+\a/2}(|y|)}{|y|^{N}} \, dy.
\end{align*}
Note that this last integral is uniformly bounded for all $x\in U$. Moreover, 
\begin{align*}
J_i(x)
&:=\int_{B_1\backslash B_\delta}\left((u_i(x)-u(x)-u_i(x+y)+u(x+y))K_i(y)\right)|y|^{-N}\, dy\\
&\leq 2\Lambda\delta^{-N}\|u_i-u\|_{L^\infty(\Omega)}|B_1\backslash B_\delta|.
\end{align*}
Therefore,
\begin{align*}
L_{K_i}u_{i}(x)-L_{K}u(x)
&=\int_{B_1}\left((u_i(x)-u_i(x+y))K_i(y)-(u(x)-u(x+y))K(y)\right)|y|^{-N}\, dy\\
&=H_i(x)+I_i(x)+J_i(x)=o(1)
 \end{align*}
uniformly in $U$ as $i\to\infty$. Since $U$ is an arbitrary compact subset of $\Omega$, the claim follows.
 \end{proof}
 
\begin{proof}[Proof of Theorem \ref{thm:loglap}]
Let $(f_i) \subset C^\8(\W)$ be a sequence that converges locally uniformly to $f$ and such that $\sup_i\|f_i\|_{\mathcal Y(\Omega)} <\8$. This approximation can be achieved for instance by a mollification of $f$ and keeping in mind that $\|f\|_{\mathcal Y(\Omega)}<\8$ (see Lemma \ref{lem:10}).  Let $K_i\in C^\infty_c(\R^N)$ be a mollification of $K$ as in \eqref{Ki}. Then, by the existence result from \cite[Theorem 1.6]{feulefack2021nonlocal} in the smooth setting ($m=\8$), there is  $u_{i}\in L^\8(B_1(\W))\cap C^\8_{loc}(\W)$ such that 
\begin{align*}
\int_{\R^N}\frac{u_{i}(x)-u_{i}(y)}{|x-y|^N}K_i(x-y)\, dy = f_i(x)\qquad \text{ for }x\in\Omega,\qquad u_{i}=0\quad \text{ in }\R^N\backslash \Omega
\end{align*}
for every $i\in \N$.  Then,
\begin{align*}
L_{K_i}u_{i}(x)=\int_{B_1}\frac{u_{i}(x)-u_{i}(x+y)}{|y|^N}K_i(y)\, dy = f_i(x) - R_i u_{i}(x)\qquad \text{ for }x\in\Omega,
\end{align*}
where $R_i$ is given by \eqref{Ri}.

Arguing as in \cite[Theorem 5.4]{MR3995092} using a comparison principle for weak solutions (see \cite[Proposition 1.1]{feulefack2021nonlocal}) and the subsolution given in Lemma \ref{sub}, it follows that $u_{i} \in C(\overline\W)$ (we omit the details since this argument is basically the same as the one given in \cite[Theorem 5.4]{MR3995092} with simple changes). Hence, we have produced a sequence of classical solutions in the regularity classes of our a priori estimates. Then,
\begin{align}\label{pe}
\|u_{i}\|_{\mathcal X^\alpha(\Omega)} \leq C(\|f_{i}\|_{\mathcal Y(\Omega)}+\|R_{i}u_{i}\|_{\mathcal Y(\Omega)})\qquad \text{ for all }i\in \N
\end{align}
(see Remark \ref{rema}).  

By Lemma \ref{lem:Ri} and Remark \ref{othermp}, passing to a subsequence,
\begin{align}\label{pe2}
\|R_{i}u_{i}\|_{\mathcal Y(\Omega)}
\leq \frac{1}{i}\|u_{i}\|_{\mathcal X^\alpha(\Omega)}+C\|u\|_{L^\infty(\Omega)}
\leq \frac{1}{i}\|u_{i}\|_{\mathcal X^\alpha(\Omega)}+C'\|f\|_{L^\infty(\Omega)}
\qquad \text{ as }i\to\infty
\end{align}
for some $C'>0$. Then, \eqref{pe} and \eqref{pe2} imply that $(u_{i}) \subset \cL^{1+\a/2}_{loc}(\W)\cap C(\overline \W)$ is a compact sequence and we can extract a sub-sequence (denoted also by $u_{i}$) such that, for some $u \in C(\overline\W) \cap \cL^{1+\a}(\W)$,  $u_{i}\to u$ uniformly in $\overline\W$ and \eqref{l} holds, see Lemma \ref{cpt}. 

Then, by Lemma \ref{lem:conv}, $L_{K_i}u_{i}$ and $f_i-R_iu_{i}$ converges locally uniformly to $L_{K}u$ and $f$ respectively; therefore, $L_Ku=f$ in $\Omega$ as claimed. Since uniqueness follows from the maximum principle Lemma~\ref{lem:mp}, this ends the proof.
\end{proof}

We are ready to show Theorem \ref{fredholm} stated in the introduction. 

\begin{proof}[Proof of Theorem \ref{fredholm}]
From the previous theorem we know that $L_K^{-1}:\mathcal Y(\Omega)\to\mathcal X^\alpha(\Omega)$ is a bounded linear operator. 

Next, we show that $u\in\mathcal X^\a(\Omega)$ is a solution of \eqref{fa} if and only if $u$ solves
\begin{align}\label{fa2}
(I+L_K^{-1}T)u=L_K^{-1}f.
\end{align}
If $u$ is a solution of \eqref{fa}, then, by applying $L_K^{-1}$ on both sides  of \eqref{fa}, we obtain that $u$ solves \eqref{fa2}. On the other hand, if $u$ solves \eqref{fa2}, then $u = L_K^{-1}(f-Tu)$ and therefore $(L_K+T)u=f$.

Since $L_K$ is bounded and $T$ is compact, we have that the composition $L_K^{-1}T$ is a compact operator. Then, the Fredholm Alternative (see \cite[Sec. 21.1, Theorem 5]{lax2002functional}) yields that either there exists a unique solution for each $f\in \mathcal Y(\Omega)$ or there exists a non-trivial $u\in \ker(I+L_K^{-1}T)$. Reasoning as before, this is equivalent to the existence of a non-trivial $u \in \mathcal X^\alpha(\Omega)$ such that $(L_K+T)u=0$.

The estimate for the first alternative is a consequence of the inverse mapping theorem.
\end{proof}

\subsection{Applications}

\subsubsection{The logarithmic Laplacian}

Following \cite{MR3995092}, the logarithmic Laplacian is  given by
\begin{align}\label{lld}
L_\D u = c_NLu - c_N J\ast u +\r_Nu,    
\end{align}
where $L=L_K$ with $K\equiv 1$, $J(x):= \chi_{\R^N\sm B_1}|x|^{-N}$, 
\begin{align}\label{constants}
c_N:=\pi^{-\frac{N}{2}}\Gamma(\tfrac{N}{2}),\qquad \rho_N:=2\ln 2 + \psi(\tfrac{N}{2})-\gamma,
\end{align}
$\gamma=-\Gamma'(1)$ is the Euler-Mascheroni constant, and $\psi=\frac{\Gamma'}{\Gamma}$ is the Digamma function.

We show first some auxiliary lemmas.

\begin{lemma}\label{bd:lem}
Let $R>0$ and, for $w\in B_R(0)$, let
\begin{align*}
\Psi(w):=\frac{1}{\ell(|w|)}\int_{\Omega}\left|\chi_{\R^N\backslash B_1}(z)|z|^{-N}-\chi_{\R^N\backslash B_1}(w+z)|w+z|^{-N}\right|\, dz<\infty.
\end{align*}
Then $\sup_{B_R}\Psi<\infty$.
\end{lemma}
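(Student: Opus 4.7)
The plan is to establish the pointwise bound
\begin{align*}
I(w) := \int_{\Omega} \bigl| \chi_{\R^N\sm B_1}(z)|z|^{-N} - \chi_{\R^N\sm B_1}(w+z)|w+z|^{-N} \bigr|\, dz \leq C|w|, \qquad w\in B_R,
\end{align*}
with $C=C(N,R,\Omega)>0$. Once this is in hand, the lemma is immediate: the inequality $r\leq C_R\,\ell(r)$ holds for every $r\in(0,R]$ (because the function $r|\ln r|$ is bounded on $(0,1]$ and $\ell$ is constant and positive on $[0.1,\infty)$), so $\Psi(w) = I(w)/\ell(|w|)\leq C\cdot C_R$.

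I would first dispose of the regime $|w|\in(1/2,R]$ separately: there the integrand is uniformly bounded on $\Omega$, so $I(w)$ is bounded by a constant depending only on $R$ and $|\Omega|$, and since $\ell(|w|)\geq \ell(R)>0$ on this range, $\Psi(w)$ is trivially uniformly bounded. Thus the content is in the regime $|w|\leq 1/2$, which I address next.

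Writing $J(z):=\chi_{\R^N\sm B_1}(z)|z|^{-N}$, I split $|J(z)-J(w+z)|$ according to where $z$ and $w+z$ lie relative to $B_1$. On $B_1\cap(B_1-w)$ the integrand vanishes. On the symmetric difference $S:=B_1\triangle(B_1-w)$ one of the two terms vanishes while the other takes the form $|z|^{-N}$ (or $|w+z|^{-N}$) with the corresponding point in the thin shell $B_{1+|w|}\sm B_{1-|w|}$; the integrand is therefore bounded by $(1-|w|)^{-N}\leq 2^N$, and an elementary computation of the measure of the symmetric difference of concentric balls gives $|S|\leq C|w|$, producing a contribution of order $|w|$.

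On the main region $E:=\{|z|\geq 1,\ |w+z|\geq 1\}$ the integrand equals $\bigl||z|^{-N}-|w+z|^{-N}\bigr|$, and I would apply the fundamental theorem of calculus to $t\mapsto |z+tw|^{-N}$ on $[0,1]$ to write this as at most $N|w|\sup_{t\in[0,1]}|z+tw|^{-N-1}$. Because $|z|\geq 1\geq 2|w|$, the segment from $z$ to $z+w$ stays inside $\{|\xi|\geq |z|/2\}$, so the integrand is bounded by $C|w||z|^{-N-1}$; integrating over $\{|z|\geq 1\}$ (which contains $E\cap\Omega$) gives
\begin{align*}
\int_{\R^N\sm B_1}C|w||z|^{-N-1}\, dz = C\omega_{N-1}|w|\int_1^\infty r^{-2}\, dr = C'|w|,
\end{align*}
completing the estimate. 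The only mild subtlety is the bookkeeping around $\partial B_1$, and this is compartmentalized in the symmetric-difference step; no serious obstacle is expected, since the factor $\chi_{\R^N\sm B_1}$ precisely removes the origin singularity of $|z|^{-N}$ that would otherwise be the main difficulty, leaving only a thin boundary layer of measure $O(|w|)$ and an integrable $|z|^{-N-1}$ tail.
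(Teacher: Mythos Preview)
Your proof is correct and follows essentially the same strategy as the paper: separate the thin boundary layer (the symmetric difference $B_1\triangle(B_1-w)$, with measure $O(|w|)$ and bounded integrand) from the region where both points lie outside $B_1$, and control the latter via the smoothness of $|z|^{-N}$ away from the origin. The paper packages the second step a bit more slickly by observing that $f(z):=\chi_{\R^N\sm B_1}(z)|z|^{-N}+\chi_{B_1}(z)$ is globally Lipschitz on $\R^N$, so that $|J(z)-J(w+z)|\leq |f(z)-f(w+z)|+|\chi_{B_1}(z)-\chi_{B_1}(w+z)|$; the Lipschitz bound, integrated over the bounded set $\Omega$, then replaces both your fundamental-theorem-of-calculus computation on $E$ and your separate treatment of the case $|w|>1/2$.
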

\begin{proof}
Note that
\begin{align*}
M:=\sup_{w\in B_R(0)}\frac{1}{\ell(|w|)}\int_{\Omega}\left|\chi_{B_1}(z)-\chi_{B_1}(w+z)\right|\, dz
=\sup_{w\in B_R(0)}\frac{2|B_1\backslash B_1(w)|}{\ell(|w|)}
<\infty.
\end{align*}
Moreover, since the function $f:\R^N\to\R$ given by $f(z)=\chi_{\R^N\backslash B_1}(z)|z|^{-N}+\chi_{B_1}(z)$ is Lipschitz continuous, we have that
\begin{align*}
\sup_{w\in B_R(0)}\Psi(w)\leq \sup_{w\in B_R(0)}\frac{1}{\ell(|w|)}\int_{\Omega}\left|f(z)-f(w+z)\right|\, dz+M<\infty.
\end{align*}
\end{proof}

Recall the spaces $\mathcal X^\alpha(\Omega)$ and $\mathcal Y(\Omega)$ given in Definition \ref{XY}.
\begin{lemma}\label{T:comp}
The operator $T:\mathcal X^\alpha(\Omega)\to\mathcal Y(\Omega)$ given by $T u:=- c_N J\ast u +\r_Nu$ is compact.
\end{lemma}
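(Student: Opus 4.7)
The plan is to split $T = T_1 + T_2$ with $T_1 u := \rho_N u$ and $T_2 u := -c_N J \ast u$, and establish compactness of each summand separately. Since $T_1$ is simply a scalar multiple of the inclusion $\mathcal X^\alpha(\Omega) \hookrightarrow \mathcal Y(\Omega)$, its compactness is immediate from Lemma \ref{cpt:local}, which already records the compactness of this embedding.

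For $T_2$, the strategy I would follow is to factor it as a composition of a compact embedding with a bounded linear map. Specifically, I would first observe that the inclusion $\mathcal X^\alpha(\Omega) \hookrightarrow L^\infty(\Omega)$ is compact: this follows either from Lemma \ref{cpt:local} chained with the continuous inclusion $\mathcal Y(\Omega)\hookrightarrow L^\infty(\Omega)$, or directly from Arzelà--Ascoli applied to a bounded family in $\mathcal X^\alpha$ (which shares the uniform modulus of continuity $\ell^\alpha$ on $\R^N$). It then remains to show that the convolution map $u \mapsto J\ast u$ is bounded from $L^\infty(\Omega)$ (extending by zero outside $\Omega$) into $\mathcal Y(\Omega)$.

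This boundedness rests on two estimates. The $L^\infty$ bound on $J\ast u$ is elementary: since $J(x-y)$ vanishes whenever $|x-y|\leq 1$, we have $\|J\ast u\|_{L^\infty(\Omega)} \leq |\Omega|\,\|u\|_{L^\infty(\Omega)}$. The oscillation bound is the heart of the matter, and here I would invoke Lemma \ref{bd:lem}: writing
\[
|J\ast u(x) - J\ast u(z)| \leq \|u\|_{L^\infty(\Omega)} \int_\Omega \bigl| J(x-y)-J(z-y) \bigr|\, dy
\]
and making the substitution $\xi = x-y$, the inner integral becomes exactly one of the form appearing in Lemma \ref{bd:lem}, but over the translated bounded domain $x-\Omega$. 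That lemma yields a bound of the form $C\,\ell(|x-z|)$ with $C$ independent of $x,z \in \Omega$, and since $\ell^2(d(x,z))\leq 1$, the weighted seminorm $[J\ast u]^{(1)}_{\mathcal L^1(\Omega)}$ is controlled by $C\|u\|_{L^\infty(\Omega)}$.

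The main subtlety I anticipate is that Lemma \ref{bd:lem} is stated for the fixed domain $\Omega$, whereas my application needs its conclusion uniformly over translates $x-\Omega$; however, the proof of that lemma uses only the Lipschitz behavior of $J$ away from the sphere $|z|=1$ together with the finite Lebesgue measure of the integration domain, both translation invariant, so the resulting constant is automatically uniform in $x$. Once this is in place, $T_2$ is the composition of a compact embedding with a bounded operator, hence compact, and therefore $T = T_1+T_2$ is compact as a sum of two compact operators.
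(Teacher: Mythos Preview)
Your proposal is correct and follows essentially the same route as the paper: both use Lemma~\ref{cpt:local} for the $\rho_N u$ piece, and for the convolution piece both reduce to the $L^\infty$ bound $\|J\ast u\|_{L^\infty}\leq |\Omega|\,\|u\|_{L^\infty}$ together with the oscillation estimate from Lemma~\ref{bd:lem}. The only cosmetic difference is that the paper argues directly with sequences (showing $J\ast u_n\to J\ast u$ in $\mathcal Y(\Omega)$ whenever $u_n\to u$ in $L^\infty$), whereas you phrase the same computation as ``bounded $\circ$ compact''; you also make explicit the translate issue in Lemma~\ref{bd:lem} that the paper leaves implicit.
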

\begin{proof}
Indeed, $\r_N u$ is compact because the identity $id:\mathcal X^\alpha(\Omega)\to \mathcal Y(\Omega)$ is compact, by Lemma \ref{cpt:local}.  In particular, if $(u_n)$ is a uniformly bounded sequence in $\mathcal X^\alpha(\Omega)$, then, passing to a subsequence, there is $u\in \mathcal X^\alpha(\Omega)$ such that $u_n\to u$ in $\mathcal Y(\Omega)$; thus,
\begin{align}\label{Linfty}
\lim_{n\to\infty}\|u_n-u\|_{L^\infty(\Omega)}=0.
\end{align}
Then, if $R>0$ is such that $\Omega\subset B_R(0)$, 
\begin{align*}
\|J\ast u_n-J\ast u\|_{L^\infty(\Omega)}
&\leq \sup_{x\in \Omega}\int_{B_{2R}(0)\backslash B_1}\frac{|u_n(x-y)-u(x-y)|}{|y|^N}\, dy\\
&\leq \|u_n-u\|_{L^\infty(\Omega)}|B_{2R}\backslash B_1|=o(1)
\end{align*}
as $n\to\infty$.  On the other hand,
\begin{align*}
    &\|J\ast u_n-J\ast u\|^{(1)}_{\cL^1(\Omega)}\leq \ell^{2}(\diam(\Omega))\sup_{\substack{x,y \in \W\\x\neq y}}\ell^{2}(d(x,y))\frac{|J\ast u_n(x)-J\ast u(x)-J\ast u_n(y)+J\ast u(y)|}{\ell(|x-y|)}\\
    &\leq \ell^{2}(\diam(\Omega))\sup_{\substack{x,y \in \W\\x\neq y}}\int_{\Omega}|u_n(z)-u(z)|\frac{|\chi_{\R^N\backslash B_1}(x+z)|x+z|^{-N}-\chi_{\R^N\backslash B_1}(y+z)|y+z|^{-N}|}{\ell(|x-y|)}\, dz\\
    &\leq \|u_n-u\|_{L^\infty(\Omega)}\ell^{2}(\diam(\Omega))\sup_{\substack{x,y \in \W\\x\neq y}}\int_{\Omega}\frac{\left|\chi_{\R^N\backslash B_1}(z)|z|^{-N}-\chi_{\R^N\backslash B_1}(x-y+z)|x-y+z|^{-N}\right|}{\ell(|x-y|)}\, dz\\&
    =o(1)
\end{align*}
as $n\to\infty,$ by Lemma \ref{bd:lem} and \eqref{Linfty}.  As a consequence, the map $u\mapsto- c_N J\ast u$ is compact from $\mathcal X^\alpha(\Omega)$ to $\mathcal Y(\Omega)$.  
\end{proof}

\begin{proof}[Proof of Theorem \ref{loglap:cor}]
The claim follows from Corollary \ref{fredholm} and Lemma \ref{T:comp}.
\end{proof}

We also include in this section a lower-order regularity result for the logarithmic Laplacian.  The following a priori estimate does not require the Fredholm alternative and it is essentially a corollary of Theorem \ref{thm:bdry}. 
\begin{corollary}
Let $\W\ss\R^N$ be a bounded open set with a uniform exterior sphere condition, $f\in L^\infty(\Omega)$, and let $u \in L^\8(B_1(\W))\cap C(\R^N) \cap C^{Dini}_{loc}(\W)$ be a solution of 
\begin{align*}
L_\Delta u=f\quad \text{ in }\Omega, \qquad u=0\quad \text{ in }\R^N\sm \W.
\end{align*}
Then there are $\a=\a(\Omega)\in(0,1)$ and $C=C(\Omega)>0$ such that
\begin{align*}
 \sup_{\substack{x,y\in \R^N\\x\neq y}}\frac{|u(x)-u(y)|}{\ell^\a(|x-y|)}\leq C (\|f\|_{L^\8(\W)}+\|u\|_{L^\8(\W)}).
\end{align*}
\end{corollary}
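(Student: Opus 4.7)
The plan is to reduce the problem to Theorem \ref{thm:bdry} applied to the finite-range operator $L:=L_K$ with $K\equiv 1$, which trivially satisfies the uniform ellipticity condition \ref{itm:uellip} with $\l=\L=1$. Using the decomposition \eqref{lld}, the equation $L_\D u=f$ rewrites as
\[
Lu = \frac{1}{c_N}\1 f + c_N J\ast u - \r_N u\2 \qquad \text{in } \W,
\]
so it suffices to estimate $\|Lu\|_{L^\8(\W)}$ in terms of $\|f\|_{L^\8(\W)}$ and $\|u\|_{L^\8(\W)}$.

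The only non-trivial piece is the convolution term $J\ast u$. Since $u$ vanishes outside $\W$ and $J$ is supported in $\R^N\sm B_1$, for $x\in\W$ one has
\[
|J\ast u(x)| = \left|\int_{\W\cap\{|x-y|>1\}}\frac{u(y)}{|x-y|^N}\,dy\right| \leq \|u\|_{L^\8(\W)}\,\w_N\ln(1+\diam(\W)),
\]
so $\|J\ast u\|_{L^\8(\W)}\leq C(\W)\|u\|_{L^\8(\W)}$. Combined with the pointwise bound on $u$, this yields
\[
\|Lu\|_{L^\8(\W)}\leq C\1\|f\|_{L^\8(\W)}+\|u\|_{L^\8(\W)}\2
\]
for some $C=C(N,\W)>0$.

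With this in hand, I would apply Theorem \ref{thm:bdry} to $L$ to obtain some $\a=\a(N,\W)\in(0,1)$ with
\[
[u]_{\cL^\a(B_1(\W))}\leq C\|Lu\|_{L^\8(\W)}\leq C\1\|f\|_{L^\8(\W)}+\|u\|_{L^\8(\W)}\2.
\]
The hypotheses of Theorem \ref{thm:bdry} are directly verified: $u\in L^\8(B_1(\W))\cap C(\overline\W)\cap C^{Dini}_{loc}(\W)$ by assumption, $u=0$ in $B_1(\W)\sm\W$, and $Lu\in L^\8(\W)$ by the above.

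Finally, to upgrade the seminorm from $B_1(\W)$ to all of $\R^N$: if both $x,y$ lie in $B_1(\W)$ the previous estimate applies; if both lie in $\R^N\sm\W$ the quotient vanishes since $u=0$ there; and if at least one of $x,y$ lies outside $B_1(\W)$, then $|x-y|\geq 1\geq \r_0$, so $\ell(|x-y|)=\ell(\r_0)$ is a fixed positive constant and the quotient is trivially bounded by $C\|u\|_{L^\8(\W)}$. There is no serious obstacle here; the whole point of the argument is the algebraic observation that the nonlocal tail $J\ast u$ can be absorbed as a lower-order remainder, reducing $L_\D$ to the finite-range operator $L_K$ already handled by the boundary estimate of Section \ref{brwzbd}.
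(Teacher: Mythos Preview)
Your proposal is correct and follows essentially the same approach as the paper: rewrite $L_\Delta u=f$ via the decomposition \eqref{lld} to get $Lu$ bounded in $L^\infty(\W)$ by $\|f\|_{L^\infty(\W)}+\|u\|_{L^\infty(\W)}$ (using that $J\ast u$ is a bounded lower-order term), then invoke Theorem~\ref{thm:bdry}. The only differences are cosmetic: the paper bounds $|J\ast u(x)|$ by $|\W|\|u\|_{L^\infty(\W)}$ rather than your logarithmic constant, and it does not spell out the trivial extension of the seminorm from $B_1(\W)$ to $\R^N$.
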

\begin{proof}
Recall that, by \eqref{lld}, $L_\D u = c_NLu - c_N J\ast u +\r_Nu$. For $x\in \Omega$,
\begin{align*}
|J\ast u(x)|
\leq \int_{\Omega\backslash B_1(x)}\frac{|u(y)|}{|x-y|^{N}}\, dy
\leq |\Omega| \|u\|_{L^\8(\W)},
\end{align*}
and then, $\|L u\|_{L^\8(\W)}\leq C_1 (\|L_\Delta u\|_{L^\8(\W)}+\|u\|_{L^\8(\W)})=C_1 (\|f\|_{L^\8(\W)}+\|u\|_{L^\8(\W)}))$ for some $C_1=C_1(\Omega)>0$. The result now follows from Theorem \ref{thm:bdry}.
\end{proof}

\subsubsection{The logarithmic Schrödinger equation}

\begin{proof}[Proof of Theorem \ref{Sch:log}]
Arguing as in Lemma \ref{T:comp} it follows that $T=(I-\D)^{\log}-c_NL_K$ with $K\equiv 1$ is a compact operator from $\mathcal X^\alpha(\Omega)$ to $\mathcal Y(\Omega)$.   Moreover, in \cite[Section 2]{feulefack2021logarithmic} it is proved that the trivial solution is the only (weak) solution of the homogeneous equation with zero boundary data. The claim now follows from Corollary \ref{fredholm}.
\end{proof}

\section{Open problems}

We close this paper with some open questions that we believe are interesting and would improve  our understanding of the fine regularity properties of this kind of operators. 

\begin{enumerate}
    \item What is the optimal higher-order interior regularity estimate?  For instance, is it true that for $\a\in(0,1)$ one can obtain a bound in the $\cL^{1+\a}$ norm of the solution in terms of the $\cL^\a$ norm of the right-hand side?   This would allow the implementation of a bootstrap argument for bounded solutions in nonlinear problems. 
    \item In bounded domains, Lemma  \ref{mp} implies that solutions are bounded if $L_Ku$ is bounded.  Is it possible to allow a behavior like $L_Ku \sim \ell^{-\a}(d)$ for some small $\a>0$ to obtain an $L^\infty$ estimate on the solution?
    \item What is the exact rate of growth for the constant in the maximum principle? (see Remark \ref{rmk:mp}).
\end{enumerate}

\section{Appendix}

\subsection{Properties for the log-Hölder moduli of continuity}\label{sec:log:holder}

\begin{proof}[Proof of Lemma \ref{prop1} (Semi-homogeneity)]

We divide the estimate according to the regions shown in Figure \ref{fig:1}. Keep in mind that $\r_0=0.1$ and $\ell(\r) = |\ln(\min(\r,\r_0))|^{-1}.$  We also use that, for any $\l>0$,
\[
\lim_{r\to 0^+} \frac{\ell(\l r)}{\ell(r)} = 1.
\]
\begin{figure}
    \centering
    \includegraphics[scale=.6]{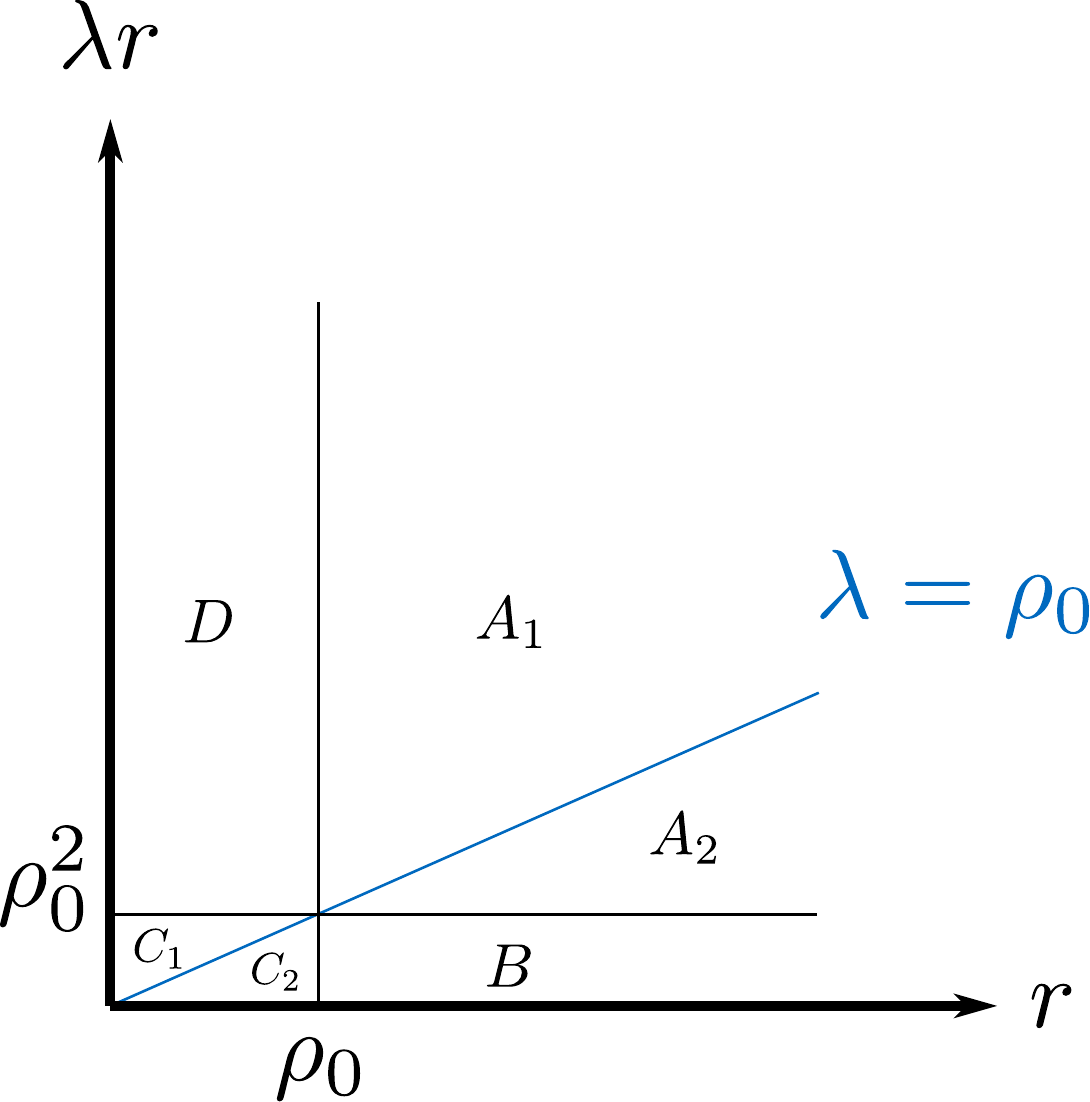}
    \caption{Different configurations for $r$ and $\l r$.}
    \label{fig:1}
\end{figure}

Let $(r,\l r)\in A_1\cup A_2\cup D$. Since $\l r \geq \r_0^2$ we have that $\ell(\l r)\geq \ell(\r_0^2):= c \geq c\ell(\l)\ell(r)$.

If $(r,\l r)\in B$, then
\[
\frac{\ell(\l r)}{\ell(r)} = \ell(\l r) \geq \ell(\l \r_0)
\geq  c \ell(\l), \qquad c:=\inf_{\l\in (0,\r_0)}\ell(\l\r_0)/\ell(\l)>0.
\]

For $(r,\l r)\in C_1$,
\[
\ell(\l) = 1 = C\inf_{r\in (0,\r_0)}\frac{\ell(\r_0 r)}{\ell(r)} \leq C\frac{\ell(\l r)}{\ell(r)}.
\]

Finally, if $(r,\l r)\in C_2$ we use the monotonicity of $z\mapsto z/(z+|\ln \l|)$ for $z>0$ and the fact that $|\ln z|>1$ for $z\in(0,\r_0)$,
\[
\frac{\ell(\l r)}{\ell(r)} = \frac{|\ln r|}{|\ln (\l r)|} = \frac{|\ln r|}{|\ln r|+|\ln \l|} \geq \frac{1}{1+|\ln \l|} \geq
\frac{1}{2|\ln\l|}=\frac{1}{2}\ell(\l).
\]
\end{proof}

\begin{lemma}[Completeness]\label{banach}
Let $\a\geq0$ and $\W\ss\R^N$ arbitrary. Then $(\cL^\a(\W),\|\cdot\|_{\cL^\a(\W)})$ is a Banach space.
\end{lemma}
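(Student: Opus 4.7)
The plan is to run the standard ``uniform limit of a Cauchy sequence'' argument, exploiting that $\|\cdot\|_{L^\infty(\W)}$ is part of the norm $\|\cdot\|_{\cL^\a(\W)}$, so any Cauchy sequence in $\cL^\a(\W)$ is automatically Cauchy in $L^\infty(\W)$ and hence converges uniformly to some bounded pointwise limit $u$. The only work is to transfer the seminorm bound $[\cdot]_{\cL^\a}$ across the limit.

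First I would check that $\|\cdot\|_{\cL^\a(\W)}$ is a norm: the only non-trivial point is positive definiteness, which is clear from the $L^\infty$ term, plus the triangle inequality, which follows immediately from the triangle inequality for $|u(x)-u(y)|$ inside the definition of $[\cdot]_{\cL^\a(\W)}$ and for $\|\cdot\|_{L^\infty(\W)}$.

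Next, given a Cauchy sequence $(u_n)\ss\cL^\a(\W)$, uniform completeness of $L^\infty(\W)$ produces a bounded function $u:\W\to\R$ with $u_n\to u$ uniformly. To see that $u\in\cL^\a(\W)$ and that $u_n\to u$ in $\cL^\a(\W)$, fix $\e>0$ and take $n_0$ with $\|u_n-u_m\|_{\cL^\a(\W)}<\e$ for all $n,m\geq n_0$. For any $x,y\in\W$ with $x\neq y$, this gives
\[
\frac{|(u_n-u_m)(x)-(u_n-u_m)(y)|}{\ell^\a(|x-y|)}<\e.
\]
Letting $m\to\8$ and using pointwise (even uniform) convergence $u_m\to u$, the numerator converges to $|(u_n-u)(x)-(u_n-u)(y)|$, so for every $n\geq n_0$,
\[
[u_n-u]_{\cL^\a(\W)}\leq \e,
\]
and similarly $\|u_n-u\|_{L^\8(\W)}\leq\e$. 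Hence $\|u_n-u\|_{\cL^\a(\W)}\leq 2\e$ for all $n\geq n_0$. In particular $u-u_{n_0}\in\cL^\a(\W)$, so $u=u_{n_0}+(u-u_{n_0})\in\cL^\a(\W)$, and $u_n\to u$ in the $\cL^\a(\W)$-norm.

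There is no genuine obstacle here: the only subtle point is that $\ell^\a$ is only a continuous (non-strict) modulus, but since we work with pointwise ratios for fixed $x\neq y$, $\ell^\a(|x-y|)$ is a strictly positive constant in the limit passage, so swapping the limit $m\to\8$ past the seminorm is immediate and requires no Fatou-type argument beyond the pointwise limit of the numerator.
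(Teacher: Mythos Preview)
Your proof is correct and follows essentially the same route as the paper's: obtain a pointwise (uniform) limit from the $L^\infty$ part of the norm, then pass to the limit inside the seminorm quotient for fixed $x\neq y$ using the Cauchy condition. The only cosmetic difference is that the paper first checks $u\in\cL^\a(\W)$ via boundedness of Cauchy sequences and then shows norm convergence, whereas you establish norm convergence first and deduce membership from $u=u_{n_0}+(u-u_{n_0})$; both orderings are standard and equivalent.
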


\begin{proof}
Let $(u_i) \ss \cL^\a(\W)$ a Cauchy sequence. We have that for every $x\in\W$ the sequence $(u_i(x)) \ss\R$ is also Cauchy and hence converges to some $u(x)$. We need to show that $u\in \cL^\a(\W)$ and $\|u_i-u\|_{\cL^\a(\W)}\to 0$.

To show that $u\in \cL^\a(\W)$ we use that any Cauchy sequence is bounded in norm, therefore for $x,y\in\W$ different we get that
\[
\frac{|u(x)-u(y)|}{\ell^\a(|x-y|)} = \lim_{j\to\8}\frac{|u_j(x)-u_j(y)|}{\ell^\a(|x-y|)} \leq \limsup_{j\to\8}[u_j]_{\cL^\a(\W)}
\]
and the right-hand side is bounded independently of $x$ and $y$.

Let $\e>0$ be fixed and consider then $k\in\N$ such that for $i,j\geq k$
\[
\|u_i-u_j\|_{\cL^\a(\W)}  = \|u_i-u_j\|_{L^\8(\W)}+\sup_{\substack{x,y\in\W\\ x\neq y}} \frac{|(u_i-u_j)(x)-(u_i-u_j)(y)|}{\ell^\a(|x-y|)} \leq \e.
\]

For $i\geq k$ and any $x\in\W$ we get
\[
|(u_i-u)(x)| = \lim_{j\to\8}|(u_i-u_j)(x)| \leq \e.
\]
This means that $\|u_i-u\|_{L^\8(\W)}\leq \e$.

Also for $i\geq k$ and $x,y\in\W$
\[
|(u_i-u)(x)-(u_i-u)(y)| = \lim_{j\to\8}|(u_i-u_j)(x)-(u_i-u_j)(y)| \leq \e\ell^\a(|x-y|).
\]
After dividing by $\ell^\a(|x-y|)$ and taking the supremum over $x,y\in\W$ (different) we get that $[u_i-u]_{\cL^\a(\W)}\leq \e$ which completes the norm convergence of $u_i\to u$.
\end{proof}

For the following proofs recall that for $x,y \in \W\ss\R^N$, $d(x):=\dist(x,\p\W)$ and $d(x,y):=\min(d(x),d(y))$.

\begin{proof}[Proof of Lemma \ref{prop2}]
In the following, we use $C>0$ to denote possibly different constants. Let $\a,\b\geq 0$, $r_0>0$, $\l>1$, $R:=\sup\limits_{x\in\W} d(x)$, $A := \|u\|^{(\b)}_{\mathcal L^\a(\W)} = A_1+A_2,$ and $B := B_1+B_2$, where
\begin{align*}
A_1&:=\sup_{x\in\W} \ell^{\b}(d(x))|u(x)|,\quad 
A_2:=\sup_{\substack{x,y \in \W\\x\neq y}} \ell^{\a+\b}(d(x,y))\frac{|u(x)-u(y)|}{\ell^\a(|x-y|)},
\\
B_1&:=\sup_{\substack{B_{\l r}(x)\ss\W\\r\in(0,r_0]}} \ell^{\b}(r)\|u\|_{L^\8(B_{r}(x))},\quad 
B_2:=\sup_{\substack{B_{\l r}(x)\ss\W\\r\in(0,r_0]}}\ell^{\a+\b}(r)[u]_{\mathcal L^{\a}(B_{r}(x))}.
\end{align*}

Let $x_0 \in\W$, $\delta=d(x_0)\leq R$, and $r:=\min(\frac{\delta}{\l},r_0)$. Then $B_{\l r}(x_0) \ss \W$ and $r\in(0,r_0]$. By Lemma~\ref{prop1},
\[
\ell^\b(\delta)|u(x_0)| \leq C \frac{\ell^\b(r)}{\ell^\b(\frac{r}{\delta})} \|u\|_{L^\8(B_r(x_0))} \leq \frac{C}{\ell^\b(\min(\frac{1}{\l},\frac{r_0}{R}))} B_1.
\]
Taking the supremum with respect to $x_0 \in\W$, we get that $A_1\leq C B_1$.

On the other hand, let $B_{\l r}(x_0) \ss \W$ with $r\in(0,r_0)$. Then, 
\[
\ell^\b(r)\|u\|_{L^\8(B_r(x_0))} \leq \frac{C}{\ell^\b(\l-1)}\sup_{x \in B_r(x_0)} \ell^\b((\l-1)r)|u(x)|\leq \frac{C}{\ell^\b(\l-1)}A_1,
\]
where we used that $d(x)\geq d(x_0)-|x-x_0|\geq \lambda r - r=(\lambda-1)r$. Taking the supremum with respect to $B_{\l r}(x_0) \ss \W$ with $r\in(0,r_0)$ we obtain that $B_1\leq C A_1$.

Next, let $x_0,y_0 \in\W$ be two different points such that $\delta := d(x_0)\leq d(y_0)$ and let  $r:=\min(\frac{\delta}{\l},r_0)$. If $y_0\in B_r(x_0)$, then
\[
\ell^{\a+\b}(\delta)\frac{|u(x_0)-u(y_0)|}{\ell^\a(|x_0-y_0|)}
\leq C\frac{\ell^{\a+\b}(r)}{\ell^{\a+\b}(\frac{r}{\delta})}\frac{|u(x_0)-u(y_0)|}{\ell^\a(|x_0-y_0|)} 
\leq \frac{C}{\ell^{\a+\b}(\min(\frac{1}{\l},\frac{r_0}{R}))} B_2.
\]
If otherwise, $|x_0-y_0|\geq r$, then $\ell^\alpha(|x_0-y_0|)\geq \ell^\alpha(r)$ and therefore
\[
\ell^{\a+\b}(\delta)\frac{|u(x_0)-u(y_0)|}{\ell^\a(|x_0-y_0|)}
\leq C\ell^\a(r)\frac{\ell^{\b}(r)}{\ell^{\b}(\frac{r}{\delta})}\frac{|u(x_0)|+|u(y_0)|}{\ell^\a(r)} 
\leq \frac{C}{\ell^\b(\min(\frac{1}{\l},\frac{r_0}{R}))}B_1.
\]
Then, taking the supremum with respect to $x_0,y_0 \in\W$, we obtain that $A_2\leq C (B_1+B_2)=CB$. 

Finally, let $B_{\l r}(x_0) \ss \W$ with $r\in(0,r_0)$. Then, 
\[
\ell^{\a+\b}(r)[u]_{\mathcal L^\a(B_{r}(x_0))} \leq \frac{C}{\ell^{\a+\b}(\l-1)}\sup_{\substack{x,y\in B_r(x_0)\\x\neq y}}\ell^{\a+\b}((\l-1)r)\frac{|u(x)-u(y)|}{\ell^\a(|x-y|)} \leq \frac{C}{\ell^{\a+\b}(\l-1)}A_2.
\]
Taking the supremum with respect to $B_{\l r}(x_0) \ss \W$ with $r\in(0,r_0)$, we have that $B_2\leq CA_2$. This concludes the proof. 
\end{proof}

\begin{proof}[Proof of Lemma \ref{prop4}]
Recall that $d(x) := \dist(x,\p\W)$ and $d(x,y) := \min(d(x),d(y))$. Let
\[
C_0:= [u]^{(\b)}_{\mathcal L^\a(\W)} + \sup_{\substack{x\in\p\W\\y\in\overline\W\sm\{x\}}}\frac{|u(x)-u(y)|}{\w_0(|x-y|)}<\infty,
\]
and
\[
\w(r) := \sup_{\substack{x,y\in \overline{\W}\\|x-y|\leq r}}\min(\ell^{-(\a+\b)}(d(x,y))\ell^\a(r),\w_0(d(x))+\w_0(d(y))+\w_0(d(x)+d(y)+r)).
\]
This definition guarantees that 
\[
|u(x)-u(y)| \leq C_0\w(|x-y|)\qquad \text{for any $x,y\in \overline{\W}$}.
\]
Indeed, the $\cL^\a$-regularity hypothesis means that
\[
|u(x)-u(y)| \leq C_0\ell^{-(\a+\b)}(d(x,y))\ell^\a(|x-y|) \qquad \text{for any $x,y\in \overline{\W}$}.
\]
Meanwhile the $\w_0$-regularity hypothesis together with the triangle inequality means instead that
\begin{align*}
|u(x)-u(y)| &\leq C_0(\w_0(|x-x_0|) +\w_0(|y-y_0|)+\w_0(|x_0-y_0|)),\\
&\leq C_0(\w_0(d(x))+\w_0(d(y))+\w_0(d(x)+d(y)+|x-y|))
\end{align*}
where $x_0,y_0\in \p\W$ are such that $d(x) = |x-x_0|$ and $d(y)=|y-y_0|$.

Let us show now that $\w$ is a modulus of continuity. Note that $\w$ is positive and increasing because each one of the factors in the minimum satisfy these properties. Ir remains show that $\w(r)\to 0$ as $r\to0^+$.

Proceeding by contradiction and using the monotonicity of $\w$, assume that there is $\e_0>0$ such that $\w\geq\e_0$ in $(0,\infty)$. This means that both factors in the minimum are also $\geq \e_0$.

Given $\e\in(0,\e_0)$ let $r=r(\e)>0$ be sufficiently small such that $\ell^\a(r)\leq \e^2$. By looking at the first factor in the minimum, we have that, for any $x,y\in\overline{\W}$ with $|x-y|\leq r$, $\ell^{\a+\b}(d(x,y)) \leq \e$. Hence, as $\e$ approaches zero, we can assume that $d(x)$ and $d(y)$ are arbitrarily small. For any $\e_1>0$ there exists in this way some $\e \in(0,\e_0)$ such that
\[
\w_0(d(x))+\w_0(d(y))+\w_0(d(x)+d(y)+r) \leq \frac{\e_1}{2} + \w_0(\e_1+r).
\]
Finally, let $\e_1$ and $r$ be sufficiently small such that the right-hand side above is less than $\e_0/2$, which yields a contradiction.

Next, we show that we can take $\w \sim \ell^{\gamma}$ if $\w_0 = \ell^{\a'}$. For $\gamma\in(0,\min(\a,\a'))$, to be conveniently fixed later on, our goal is to bound the right-hand side below
\[
\frac{\w(r)}{\ell^{\gamma}(r)} =  \sup_{\substack{x,y\in \overline{\W}\\|x-y|\leq r}} W(x,y,r)
\]
where
\[
W(x,y,r) := \min\left(\ell^{-(\a+\b)}(d(x,y))\ell^{\a-\gamma}(r),\frac{\ell^{\a'}(d(x))+\ell^{\a'}(d(y))+\ell^{\a'}(d(x)+d(y)+r)}{\ell^{\gamma}(r)}\right)
\]

We consider some cases.

\underline{Case 1:} If 
\begin{align*}
    \ell^{-(\a+\b)}(d(x,y))\ell^{\a-\gamma}(r)<1,
\end{align*}
then $W(x,y,r)<1$.

\underline{Case 2:} Assume without loss of generality, $d(x)\leq d(y)$. If
\begin{align*}
1\leq \ell^{-(\a+\b)}(d(x,y))\ell^{\alpha-\gamma}(r)
= \ell^{-(\a+\b)}(d(x))\ell^{\alpha-\gamma}(r)
\end{align*}
Then $\ell^{\a+\b}(d(x))\leq \ell^{\a-\gamma}(r)$.

We now consider two sub-cases.

\underline{Case 2.1:} Suppose that $d(x)\leq r$. Then $\frac{d(x)}{r}+1<2$ and, by Lemma \ref{prop1},
\begin{align*}
\frac{\ell^{\alpha'}(d(y))}{\ell^{\gamma}(r)}
\leq \frac{\ell^{\alpha'}(d(x)+r)}{\ell^{\gamma}(r)}
\leq \frac{\ell^{\a'-\gamma}(d(x)+r)}{c\ell^{\gamma}(\frac{r}{d(x)+r})}
\leq C\ell^{-\gamma}\1\frac{1}{d(x)/r+1}\2,
\end{align*}
which is bounded. Similarly, 
\begin{align*}
\frac{\ell^{\alpha'}(d(y)+d(x)+r)}{\ell^{\gamma}(r)}
\leq \frac{\ell^{\alpha'}(2(d(x)+r))}{\ell^{\gamma}(r)}
\leq C\ell^{-\gamma}\1\frac{1}{2d(x)/r+2}\2
\end{align*}
is also bounded. Therefore, $W$ is bounded in this case.

\underline{Case 2.2:} Suppose that $d(x)>r$. In this last case we will use that
\[
\ell^{\gamma}(r) \geq \ell^{\gamma(\a+\b)/(\a-\gamma)}(d(x)).
\]
Actually it is time to choose $\gamma$ sufficiently small so that the exponent in the right-hand side is smaller than $\a'$ and hence $\ell^{\gamma}(r) \geq \ell^{\a'}(d(x))$.

Then we proceed as before
\begin{align*}
\frac{\ell^{\alpha'}(d(y))}{\ell^{\gamma}(r)}
\leq \frac{\ell^{\alpha'}(d(x)+r)}{\ell^{\gamma}(r)}
\leq \frac{\ell^{\alpha'}(2d(x))}{\ell^{\alpha'}(d(x))}
\leq C.
\end{align*}
Similarly, 
\begin{align*}
\frac{\ell^{\alpha'}(d(y)+d(x)+r)}{\ell^{\gamma}(r)}
\leq \frac{\ell^{\alpha'}(2d(x)+2r)}{\ell^{\gamma}(r)}
\leq \frac{\ell^{\alpha'}(4d(x))}{\ell^{\alpha'}(d(x))}
\leq C.
\end{align*}
Therefore, $W$ is also bounded in this case.

Since these are all the possible cases, we have that $W$ is bounded and the claim follows. 
\end{proof}

\begin{lemma}[Compactness]\label{cpt}
Let $\a_1>\a_0\geq 0$ and $\W\ss\R^N$ bounded. For any sequence $(u_i) \ss \cL^{\a_1}(\W)$ such that $M:= \sup_i\|u_i\|_{\cL^{\a_1}(\W)}<\8$ there exist $u \in \cL^{\a_1}(\W)$, and a sub-sequence $u_{i_j}$ such that $u_{i_j}\to u$ with respect to the $\cL^{\a_0}(\W)$ norm.
\end{lemma}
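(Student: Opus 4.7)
The strategy is a standard Arzelà--Ascoli compactness argument adapted to the log-Hölder moduli, combined with an interpolation trick to upgrade uniform convergence to convergence in the weaker $\cL^{\alpha_0}$ seminorm.

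First, the uniform bound $M := \sup_i \|u_i\|_{\cL^{\alpha_1}(\Omega)}<\infty$ immediately gives both $\sup_i \|u_i\|_{L^\infty(\Omega)} \leq M$ and the equicontinuity estimate $|u_i(x)-u_i(y)| \leq M\,\ell^{\alpha_1}(|x-y|)$ for all $x,y\in\Omega$. Since $\ell^{\alpha_1}$ is a genuine modulus of continuity (continuous, increasing, vanishing at $0$), each $u_i$ is uniformly continuous on the bounded set $\Omega$ and therefore admits a unique continuous extension to $\overline{\Omega}$, which is compact. The classical Arzelà--Ascoli theorem on $\overline{\Omega}$ then yields a subsequence (still denoted $u_{i_j}$) that converges uniformly on $\overline{\Omega}$ to some continuous function $u$.

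Next I would verify that $u \in \cL^{\alpha_1}(\Omega)$ by passing to the limit in the pointwise inequality
\[
\frac{|u(x)-u(y)|}{\ell^{\alpha_1}(|x-y|)} = \lim_{j\to\infty}\frac{|u_{i_j}(x)-u_{i_j}(y)|}{\ell^{\alpha_1}(|x-y|)} \leq M,
\]
so that $[u]_{\cL^{\alpha_1}(\Omega)}\leq M$ and $\|u\|_{L^\infty(\Omega)}\leq M$. Setting $w_j := u_{i_j}-u$, the uniform convergence gives $\|w_j\|_{L^\infty(\Omega)}\to 0$, and the seminorm bound passes to $w_j$ as $[w_j]_{\cL^{\alpha_1}(\Omega)} \leq 2M$.

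It remains to show $[w_j]_{\cL^{\alpha_0}(\Omega)} \to 0$. Fix $\varepsilon>0$ and write, for $x\neq y$ in $\Omega$,
\[
\frac{|w_j(x)-w_j(y)|}{\ell^{\alpha_0}(|x-y|)} = \frac{|w_j(x)-w_j(y)|}{\ell^{\alpha_1}(|x-y|)}\,\ell^{\alpha_1-\alpha_0}(|x-y|).
\]
Since $\alpha_1>\alpha_0$, we can choose $\delta>0$ small so that $2M\,\ell^{\alpha_1-\alpha_0}(\delta)<\varepsilon/2$; for $|x-y|<\delta$ the right-hand side is then bounded by $\varepsilon/2$ using $[w_j]_{\cL^{\alpha_1}(\Omega)}\leq 2M$ and the monotonicity of $\ell$. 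For $|x-y|\geq \delta$, we instead bound the numerator by $2\|w_j\|_{L^\infty(\Omega)}$ and the denominator below by $\ell^{\alpha_0}(\delta)>0$ (with the convention $\ell^{0}\equiv 1$ when $\alpha_0=0$), which is less than $\varepsilon/2$ for $j$ large. Combining both regions and adding the uniform convergence of $w_j$ yields $\|w_j\|_{\cL^{\alpha_0}(\Omega)} \to 0$, as desired.

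The only mildly delicate point is the last interpolation step: one has to split the set of pairs $(x,y)$ according to whether $|x-y|$ is small (where one exploits the extra log factor $\ell^{\alpha_1-\alpha_0}$) or bounded below (where one converts the problem into uniform convergence); otherwise the argument is routine.
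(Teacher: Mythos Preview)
Your proof is correct and follows essentially the same approach as the paper: Arzel\`a--Ascoli on $\overline{\Omega}$ to get a uniform limit, passing to the limit to show $u\in\cL^{\alpha_1}(\Omega)$, and then the identical $\delta$-splitting interpolation argument to upgrade uniform convergence to $\cL^{\alpha_0}$ convergence. The only cosmetic difference is that the paper extends each $u_i$ to $\overline{\Omega}$ via an explicit McShane-type formula $u_i(x):=\inf_{y\in\Omega}\{u_i(y)+\ell^{\alpha_1}(|x-y|)\}$ rather than invoking the abstract extension of a uniformly continuous function, but this changes nothing in substance.
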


\begin{proof}
Each $u_i$ can be extended to $\overline\W$ preserving its norm
\[
u_i(x) := \inf\{u_i(y) + \ell^\a(|x-y|) \ | \ y\in\W\}.
\]
We get in this way a uniformly bounded and equicontinuous sequence of functions over the compact set $\overline\W$. By Arzelá-Ascoli, there exist $u \in C(\W)$, and a sub-sequence $u_{i_j}$ such that $u_{i_j}\to u$ uniformly. We will show then that $u\in \cL^{\a_1}(\W)$ and $\|u_{i_j}-u\|_{\cL^{\a_0}(\W)}\to 0$.

To show that $u\in \cL^{\a_1}(\W)$ we use that, for $x,y\in\W$ different,
\[
\frac{|u(x)-u(y)|}{\ell^{\a_1}(|x-y|)} = \lim_{j\to\8}\frac{|u_{i_j}(x)-u_{i_j}(y)|}{\ell^{\a_1}(|x-y|)} \leq \sup_{j}[u_{i_j}]_{\cL^{\a_1}(\W)}.
\]
Since the right-hand side is bounded independently of $x$ and $y$, we have that  $\|u\|_{\cL^{\a_1}(\W)} \leq M$.

Let $\e>0$ be fixed, our goal is now to bound by $\e$
\[
\limsup_{j\to\8}\sup_{\substack{x,y\in\W\\x\neq y}} \frac{|(u_{i_j}-u)(x)-(u_{i_j}-u)(y)|}{\ell^{\a_0}(|x-y|)}.
\]

We consider two scenarios depending on
\[
\d := \sup\{d>0 \ | \ \ell^{\a_1-\a_0}(d) \leq \e/(2 M)\} >0.
\]
If $|x-y|<\d$, we get that
\begin{align*}
\frac{|(u_{i_j}-u)(x)-(u_{i_j}-u)(y)|}{\ell^{\a_0}(|x-y|)} &\leq \frac{|u_{i_j}(x)-u_{i_j}(y)|}{\ell^{\a_0}(|x-y|)}+\frac{|u(x)-u(y)|}{\ell^{\a_0}(|x-y|)}\\
&\leq \ell^{\a_1-\a_0}(|x-y|)\1\frac{|u_{i_j}(x)-u_{i_j}(y)|}{\ell^{\a_1}(|x-y|)}+\frac{|u(x)-u(y)|}{\ell^{\a_1}(|x-y|)}\2\\
&\leq \ell^{\a_1-\a_0}(\delta)2M\leq \e.
\end{align*}
On the other hand, if $|x-y|\geq \d$ we split the quotient as
\begin{align*}
\frac{|(u_{i_j}-u)(x)-(u_{i_j}-u)(y)|}{\ell^{\a_0}(|x-y|)} 
&\leq \ell^{-\a_0}(\d)(|u_{i_j}(x)-u(x)|+|u_{i_j}(y)-u(y)|)\\
&\leq 2\ell^{-\a_0}(\d)\|u_{i_j}-u\|_{L^\infty(\Omega)}.
\end{align*}
Then, for $j$ sufficiently large, independently of $x$ or $y$, we get the right-hand side arbitrarily small with which we conclude the proof.
\end{proof}

\begin{lemma}[Compactness on weighted spaces]\label{cpt:local}
Let $\Omega$, $\mathcal X^\alpha(\Omega)$, and $\mathcal Y(\Omega)$ as in Definition \ref{XY}. For any sequence $(u_i) \ss \mathcal X^\alpha(\Omega)$ such that 
\begin{align}\label{M2}
M&:= \sup_i\|u_i\|_{\mathcal X^\alpha(\Omega)}<\8     
\end{align}
 there exist $u \in \mathcal X^\alpha(\Omega)$ with $\|u\|_{\mathcal X^\alpha(\Omega)}<\8$ such that, passing to a sub-sequence,
 \begin{align*}
\lim\limits_{i\to\infty}\|u_{i}-u\|_{\mathcal Y(\Omega)}=\lim\limits_{i\to\infty}\|u_{i}-u\|_{\cL^{1}(\W)}^{(1)}+\|u_{i}-u\|_{L^\infty(\W)}=0.
 \end{align*}
\end{lemma}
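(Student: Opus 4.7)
My plan is to combine Lemma~\ref{cpt}, which yields a uniformly convergent subsequence on bounded sets, with an elementary interpolation between the $L^\8$ bound and the $\cL^{1+\a}$ bound to upgrade this convergence into the stronger $\mathcal Y(\Omega)$ topology.

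\textbf{Step 1 (Extraction of the limit $u\in\mathcal X^\a(\Omega)$).} Since each $u_i\equiv 0$ outside $\Omega$ and $\sup_i\|u_i\|_{\mathcal X^\a(\Omega)}\leq M$, fix $R>0$ with $\Omega\ss B_R$; then $\sup_i(\|u_i\|_{L^\8(B_R)}+[u_i]_{\cL^\a(B_R)})\leq 2M$. Lemma~\ref{cpt} applied with $\a_1=\a$ and $\a_0=0$ produces a subsequence (still denoted $u_i$) and some $u\in\cL^\a(\overline{B_R})$ with $\|u_i-u\|_{L^\8(B_R)}\to 0$. Since every $u_i$ vanishes outside $\Omega$, the same holds for $u$, and hence the convergence is uniform on all of $\R^N$. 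Passing to the pointwise limit in each of the quotients defining $\|\cdot\|^{(0)}_{\cL^{1+\a}(\Omega)}$ and $[\cdot]_{\cL^\a(\R^N)}$, lower semicontinuity of the supremum yields $\|u\|_{\mathcal X^\a(\Omega)}\leq 2M$, so $u\in\mathcal X^\a(\Omega)$.

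\textbf{Step 2 (Convergence in $\mathcal Y(\Omega)$).} Set $w_i:=u_i-u$. By the triangle inequality $\|w_i\|^{(0)}_{\cL^{1+\a}(\Omega)}\leq 4M$, while $\|w_i\|_{L^\8(\Omega)}\to 0$ by Step~1. The $L^\8$ part of $\|w_i\|_{\mathcal Y(\Omega)}$ vanishes, and since $\ell$ is bounded, $\sup_{x\in\Omega}\ell(d(x))|w_i(x)|\leq C\|w_i\|_{L^\8(\Omega)}\to 0$. For the remaining seminorm, choose
\[
\tau:=\frac{2+\a}{2(1+\a)}\in(0,1),
\]
so that $\tau(1+\a)=1+\a/2$, giving $2-\tau(1+\a)=1-\a/2\geq 0$ and $\tau(1+\a)-1=\a/2\geq 0$. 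Combining the trivial bound $|w_i(x)-w_i(y)|\leq 2\|w_i\|_{L^\8(\Omega)}$ with the seminorm bound
\[
|w_i(x)-w_i(y)|\leq 4M\,\ell^{-(1+\a)}(d(x,y))\,\ell^{1+\a}(|x-y|),
\]
through the identity $|w_i(x)-w_i(y)|=|w_i(x)-w_i(y)|^{1-\tau}|w_i(x)-w_i(y)|^\tau$ gives
\[
\ell^{2}(d(x,y))\frac{|w_i(x)-w_i(y)|}{\ell(|x-y|)}\leq C\|w_i\|_{L^\8(\Omega)}^{1-\tau}\,\ell^{1-\a/2}(d(x,y))\,\ell^{\a/2}(|x-y|)\leq C\|w_i\|_{L^\8(\Omega)}^{1-\tau},
\]
where in the last step we used that $\ell$ is bounded. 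Taking the supremum over $x,y\in\Omega$ and letting $i\to\8$ sends the right-hand side to $0$, completing the proof.

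\textbf{Main obstacle.} The only non-routine ingredient is the algebraic observation that there exists an interpolation exponent $\tau\in(0,1)$ for which the two $\ell$-weights arising after trading $\cL^{1+\a}$ regularity for the $\mathcal Y$ seminorm come out with non-negative exponents; this requires $\tau(1+\a)\in[1,2]$, which is compatible with $\tau<1$ precisely because $\a<1$. Once such a $\tau$ is fixed, the argument is a direct combination of Lemma~\ref{cpt} and pointwise lower semicontinuity.
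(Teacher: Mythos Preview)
Your proof is correct and follows the same overall strategy as the paper: extract a uniformly convergent subsequence via Lemma~\ref{cpt}, verify by lower semicontinuity that the limit lies in $\mathcal X^\a(\Omega)$, and then show that the weighted $\cL^1$-seminorm of the difference tends to zero by playing the $L^\8$ smallness against the uniform $\cL^{1+\a}$ bound.

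Two points of comparison are worth noting. First, the paper invokes a diagonalization over the exhaustion $\Omega_n$ to obtain $\cL^1(\Omega_n)$-convergence for every $n$; your argument shows this is unnecessary, since a single application of Lemma~\ref{cpt} on a large ball already gives uniform convergence, and that is all the final estimate uses. Second, and more substantively, the paper handles the seminorm $[w_i]^{(1)}_{\cL^1(\Omega)}$ by an $\e$--$\d$ dichotomy (small $|x-y|$ versus large $|x-y|$), whereas you use a one-line interpolation with exponent $\tau=\frac{2+\a}{2(1+\a)}$. Your route is cleaner and even yields the explicit rate $[w_i]^{(1)}_{\cL^1(\Omega)}\leq C\|w_i\|_{L^\8(\Omega)}^{\a/(2(1+\a))}$; the paper's splitting is the standard way of encoding the same trade-off without naming $\tau$. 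The key constraint you identify, $\tau(1+\a)\in[1,2]$ with $\tau<1$, is exactly what makes the weights come out non-negative, and it holds precisely because $\a\in(0,1)$.
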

\begin{proof}
By \eqref{M2} we know that
\begin{align*}
M=\sup_i \left(\|u_i\|_{L^\infty(\Omega)} + 
\sup_{\substack{x,y \in \W\\x\neq y}} \ell^{1+\a}(d(x,y))\frac{|u_i(x)-u_i(y)|}{\ell^{1+\a}(|x-y|)} + \sup_{\substack{x,y\in B_1(\W)\\x\neq y}}\frac{|u_i(x)-u_i(y)|}{\ell^\a(|x-y|)}\right)<\8,
\end{align*}
where $d(x,y):=\min(d(x),d(y))$.

There is $n_0\in\mathbb N$ such that 
\begin{align*}
\Omega_n:=\{x\in\Omega \::\: \dist(x,\partial\Omega)\geq \tfrac{1}{n}\}\neq \emptyset\qquad \text{ for all }n\geq n_0.
\end{align*}
Using Lemma \ref{cpt} and a standard diagonalization procedure, there is $u\in \cL^{1+\alpha}_{loc}(\Omega)\cap \cL^\alpha(B_1(\Omega))$ and a subsequence, denoted again by $u_i$, such that 
\begin{align*}
\lim\limits_{i\to\infty}\|u_{i}-u\|_{\cL^{1}(\Omega_n)}
+\|u_{i}-u\|_{L^{\infty}(B_1(\Omega))}=0\qquad \text{ for all }n\in\N.
\end{align*}
Moreover, for $x,y\in\W$ different,
\[
\ell^{1+\a}(d(x,y))\frac{|u(x)-u(y)|}{\ell^{1+\a}(|x-y|)} = \ell^{1+\a}(d(x,y))\lim_{j\to\8}\frac{|u_j(x)-u_j(y)|}{\ell^{1+\a}(|x-y|)} \leq \sup_{j}[u_j]^{(0)}_{\cL^{1+\a}(\W)}\leq M.
\]
Since the right-hand side is bounded independently of $x$ and $y$, we have that  $\|u\|^{(0)}_{\cL^{1+\a}(\W)} \leq M$.  Therefore, $\|u\|_{\mathcal X}<\8$. 

Let $\e>0$ be fixed, our goal is now to bound by $\e$
\[
\limsup_{j\to\8}\sup_{\substack{x,y\in\W\\x\neq y}} \ell^{2}(d(x,y))\frac{|(u_{i}-u)(x)-(u_{i}-u)(y)|}{\ell(|x-y|)}.
\]

We consider two scenarios depending on
\[
\d := \sup\left\{d>0 \::\: \ell^{\a}(d) \leq \frac{\e}{2 M\ell^{1-\a}(\diam(\Omega))}\right\} >0.
\]
If $|x-y|<\d$, we obtain that
\begin{align*}
\ell^{2}(d(x,y))&\frac{|(u_{i}-u)(x)-(u_{i}-u)(y)|}{\ell(|x-y|)} \leq \ell^{2}(d(x,y))\left(\frac{|u_{i}(x)-u_{i}(y)|}{\ell(|x-y|)}+\frac{|u(x)-u(y)|}{\ell(|x-y|)}\right)\\
&\leq \ell^{1-\a}(d(x,y))\ell^{\a}(|x-y|)
\1\ell^{1+\a}(d(x,y))\frac{|u_{i}(x)-u_{i}(y)|}{\ell^{1+\a}(|x-y|)}
+\ell^{1+\a}(d(x,y))\frac{|u(x)-u(y)|}{\ell^{1+\a}(|x-y|)}\2\\
&\leq \ell^{1-\a}(\diam(\Omega))\ell^{\a}(\delta)2M\leq \e.
\end{align*}
On the other hand, if $|x-y|\geq \d$ we split the quotient as
\begin{align*}
\ell^{2}(d(x,y))\frac{|(u_{i}-u)(x)-(u_{i}-u)(y)|}{\ell^{\a}(|x-y|)} 
&\leq \ell^{2}(\diam(\Omega))\ell^{-\a}(\d)(|u_{i}(x)-u(x)|+|u_{i}(y)-u(y)|)\\
&\leq 2\ell^{2}(\diam(\Omega))\ell^{-\a}(\d)\|u_{i}-u\|_{L^\infty(\Omega)}.
\end{align*}
Then, for $i$ sufficiently large, independently of $x$ or $y$, we get the right-hand side arbitrarily small with which we conclude the proof.
\end{proof}

\begin{lemma}\label{lem:10}
Let $\a,\b\geq 0$ and $f \in \cL_{loc}^\a(\W)$ with $\|f\|_{\cL^\a(\W)}^{(\b)}<\8$. Then there exists an family $(f_i)_{\e\in(0,1)}\subset C^\8(\W)$ that converges to $f$ locally uniformly in $\W$ as $\e\to0^+$, and such that $\sup_{\e\in(0,1)}\|f_\e\|^{(\b)}_{\cL^{\a}(\W)}<\8$.
\end{lemma}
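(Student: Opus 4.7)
The plan is to construct $f_\e$ via a smooth variable-scale mollification whose radius vanishes proportionally to $d(x):=\dist(x,\partial\W)$. First I would fix a smooth regularized distance $\psi\in C^\infty(\W)$ satisfying $\psi(x)\leq d(x)/8$, $|\nabla\psi(x)|\leq 1/8$, and $\psi$ comparable from below to $d$; such $\psi$ is classical (a Whitney-type construction, see e.g.\ Stein's \emph{Singular Integrals}, Ch.~VI). With a standard mollifier $\eta\in C^\infty_c(B_1)$, $\int\eta=1$, I would set
\[
f_\e(x) := \int_{B_1} f\bigl(x+\e\psi(x)z\bigr)\,\eta(z)\,dz.
\]
Smoothness of $f_\e$ in $x$ follows from smoothness of $\psi$ together with the fact that $B_{\e\psi(x)}(x)\cc\W$, while local uniform convergence $f_\e\to f$ on compacta $K\cc\W$ follows from the continuity of $f$ and $\e\psi(x)\leq \e\diam(\W)\to 0$ uniformly on $K$.

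The main step is controlling $[f_\e]^{(\b)}_{\cL^\a(\W)}$ uniformly in $\e$. For fixed $x,y\in\W$, $x\neq y$, and $z\in B_1$, I would set $u:=x+\e\psi(x)z$ and $v:=y+\e\psi(y)z$. The Lipschitz bound on $\psi$ together with $|z|\leq 1$ gives $|u-v|\leq (1+\e/8)|x-y|\leq 2|x-y|$, and $\psi\leq d/8$ gives $d(u),d(v)\in[7d(\cdot)/8,\,9d(\cdot)/8]$, hence $d(u,v)\geq 7d(x,y)/8$. By the semi-homogeneity Lemma \ref{prop1},
\[
\ell^\a(|u-v|)\leq C\,\ell^\a(|x-y|), \qquad \ell^{\a+\b}(d(u,v))\geq c\,\ell^{\a+\b}(d(x,y)).
\]
The $\cL^\a$-control on $f$ then gives
\[
|f(u)-f(v)|\leq \frac{[f]^{(\b)}_{\cL^\a(\W)}\,\ell^\a(|u-v|)}{\ell^{\a+\b}(d(u,v))}\leq C\,[f]^{(\b)}_{\cL^\a(\W)}\,\frac{\ell^\a(|x-y|)}{\ell^{\a+\b}(d(x,y))}.
\]
Integrating against $\eta(z)\,dz$ and rearranging yields $[f_\e]^{(\b)}_{\cL^\a(\W)}\leq C[f]^{(\b)}_{\cL^\a(\W)}$.

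The weighted $L^\infty$ part is handled analogously: for any $y$ in the mollification neighborhood of $x$ one has $d(y)\in[7d(x)/8,9d(x)/8]$, so Lemma \ref{prop1} gives $\ell^\b(d(x))\leq C\ell^\b(d(y))$, whence $\ell^\b(d(x))|f_\e(x)|\leq C\sup_y\ell^\b(d(y))|f(y)|\leq C\|f\|^{(\b)}_{\cL^\a(\W)}$. Combining both bounds produces $\sup_{\e\in(0,1)}\|f_\e\|^{(\b)}_{\cL^\a(\W)}<\infty$, as required. The only genuinely non-routine ingredient will be the existence of the smooth regularized distance $\psi$ with a universal Lipschitz constant; once that is in hand, all remaining estimates reduce to a bookkeeping application of semi-homogeneity.
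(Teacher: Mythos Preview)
Your proof is correct and is a cleaner variant of the paper's argument. Both constructions are variable-scale mollifications at radius comparable to the boundary distance, but the paper implements this discretely: it builds an explicit Whitney-type cover $V_k = \W_{1/2^{k+1}}\sm\overline{\W_{1/2^{k-1}}}$, constructs a partition of unity $(\xi_k)$ with $\|D\xi_k\|_{L^\8}\leq C2^k$ by hand, and sets $f_\e=\sum_k \xi_k(\eta_{\e/2^{k+3}}\ast f)$. The final bound is then verified via the ball-localization characterization of $\|\cdot\|^{(\b)}_{\cL^\a(\W)}$ in Lemma \ref{prop2}. You instead package the same geometry into a single smooth regularized distance $\psi$ (cited from Stein) and mollify continuously at scale $\e\psi(x)$, which lets you estimate $[f_\e]^{(\b)}_{\cL^\a(\W)}$ directly by comparing the shifted points $u,v$ to $x,y$ via the Lipschitz bound on $\psi$ and the semi-homogeneity Lemma \ref{prop1}, bypassing both the explicit partition construction and Lemma \ref{prop2}. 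Your route is shorter and more transparent; the paper's has the minor advantage of being self-contained. One small point worth making explicit in your writeup: the smoothness of $f_\e$ in $x$ should be argued after the change of variables $w=x+\e\psi(x)z$, so that all $x$-dependence sits in the smooth kernel $(\e\psi(x))^{-N}\eta((w-x)/(\e\psi(x)))$ rather than inside the (merely continuous) $f$.
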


\begin{proof}
Given $\e>0$, let $\eta_\e(x) := \e^{-N}\eta(x/\e)$ where $\eta\in C_0^\8(B_1)$ is non-negative, radially symmetric, and such that $\int_{B_1} \eta=1$.

Let for $d>0$
\[
\W_d := \{x\in\W \ | \ \dist(x,\p\W)>d\}
\]
and for $k\in \N\cup\{0\}$,
\[
V_k := \begin{cases}
\W_{1/2} \text{ if } k=0,\\
\W_{1/2^{k+1}}\sm\overline{\W_{1/2^{k-1}}} \text{ if }k\geq 1,
\end{cases}
\]
a sequence of sets that exhausts $\W$. Let $(\xi_k)$ be a smooth partition of unity subordinated to $(V_k)$ such that $\|D\xi_k\|_{L^\8(\R^N)} \leq C2^k$. Let us see how to get the last assertion.

Let
\begin{align*}
W_k &:= \begin{cases}
\W_1 \text{ if } k=0,\\
\W_{7/2^{k+2}}\sm\overline{\W_{5/2^{k+3}}} \text{ if } k\geq 1,
\end{cases}\\
\xi_{k,0} &:= \dist(\cdot,\R^N\sm W_k),\\
\xi_{k,1} &:= \eta_{1/2^{k+3}}\ast \xi_{k,0},\\
\xi_{k,2} &:= \begin{cases}
\xi_{1,0}+\xi_{1,1} \text{ if } k=0,\\
\xi_{1,k-1}+\xi_{1,k}+\xi_{1,k+1} \text{ if } k\geq 1,
\end{cases}\\
\xi_k &:= \begin{cases}
\xi_{k,1}/\xi_{k,2} \text{ in } W_k,\\
0 \text{ on } \R^N\sm\W_k.
\end{cases}
\end{align*}
It is clear that $(\xi_k)$ is a partition of unity subordinated to $(V_k)$. Moreover, for any $i\in\{0,1,2\}$,
\[
2^k\|\xi_{k,i}\|_{L^\8(\R^N)}+[\xi_{k,i}]_{C^{0,1}(\R^N)}\leq C
\]
and also $\inf_{V_k}\xi_{k,2} \geq c2^{-k}$. This implies, thanks to the product rule, that 
\[
\|D\xi_k\|_{L^\8(\R^N)} = \|D\xi_k\|_{L^\8(V_k)} \leq \frac{\|D\xi_{k,1}\|_{L^\8(V_k)}}{\inf_{V_k}\xi_{k,2}} + \frac{\|\xi_{k,1}\|_{L^\8(V_k)}}{\inf_{V_k}\xi_{k,2}^2}\|D\xi_{k,2}\|_{L^\8(V_k)} \leq C2^k.
\]

Finally, let $f_\e := \sum_{k=0}^\8 \xi_k (\eta_{\e/2^{k+3}} \ast f) \in C^\8(\W)$ which converges to $f$ locally uniformly. Let us show that $\sup_{\e}\|f\|_{\cL^\a(\W)}^{(\b)}$ is bounded.

Given $B_{3r}(x) \in \W$ such that $r\in(0,1/4)$, consider either $k=0$ if $d(x)>3/4$ or $k\geq 1$ such that $d(x) \in (3/2^{k+2},3/2^{k+1}]$. This implies that $B_r(x) \ss V_k$. Moreover, the values of $f$ outside of $B_{2r}(x)$ do not contribute for the construction of $f_\e$ over $B_r(x)$.

Hence,
\[
\|f_\e\|_{L^\8(B_r(x))} \leq \sum_{i=-1}^1\|\xi_{k+i} (\eta_{\e/2^{k+i+2}} \ast f)\|_{L^\8(B_r(x))} \leq C\|f\|_{L^\8(B_{2r}(x))} \leq C\ell^{-\b}(r)\|f\|_{\cL^\a(\W)}^{(\b)}.
\]
On the other hand, we get for the oscillation that, for $y \in B_r(x)$,
\begin{align*}
|f_\e(y)-f_\e(x)| &\leq C\1\sum_{i=-1}^1 |(\eta_{\e/2^{k+i+3}}\ast f)(y)-(\eta_{\e/2^{k+i+3}}\ast f)(x)|+2^k\|f\|_{L^\8(B_{2r}(x))}|y-x|\2,\\
&\leq C\ell^{-\a-\b}(r)\|f\|_{\cL^\a(\W)}^{(\b)}\ell^{\a}(|y-x|).
\end{align*}
We just used that $\|D\xi_k\|_{L^\8(\R^N)} \leq C2^k$ for the first inequality. For the second we used the known oscillation for $f$ and that $2^k\sim r^{-1}$ such that, by the semi-homogeneity,
\[
2^k\|f\|_{L^\8(B_{2r}(x))}|y-x| \leq C\|f\|_{\cL^\a(\W)}^{(\b)}\ell^{-\b}(r)\frac{|y-x|}{r} \leq C\|f\|_{\cL^\a(\W)}^{(\b)}\ell^{-\b}(r) \frac{\ell^\a(|y-x|)}{\ell^\a(r)}.
\]
\end{proof}

\subsection{A geometric lemma}

The following result can also be found in \cite[Lemma 5.2]{MR3995092}, which in fact gives a sharper bound with a slightly more technical proof.

\begin{figure}[ht]
    \centering
    \includegraphics[scale=.4]{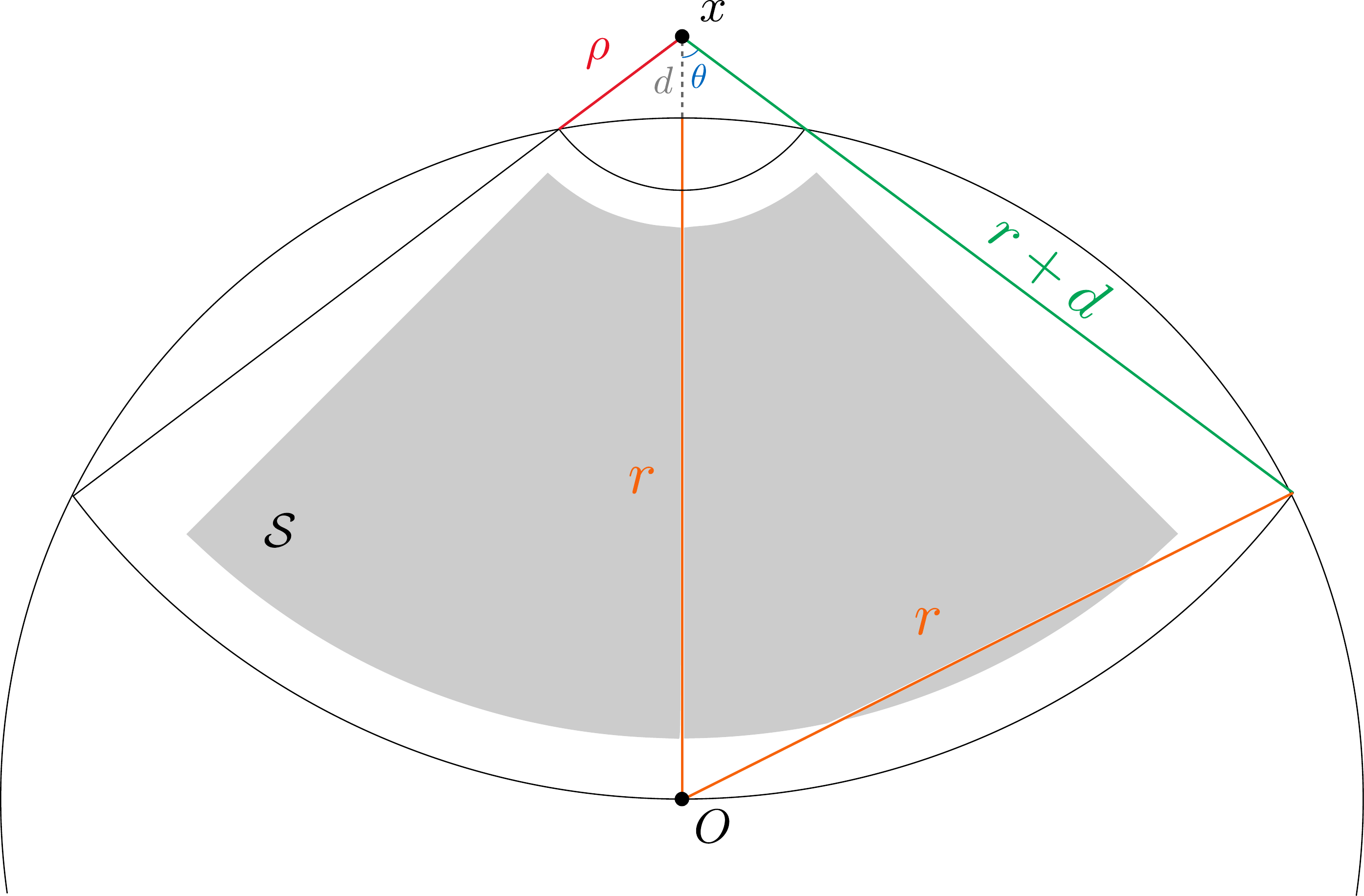}
    \caption{An interior sector.}
    \label{fig:tobias}
\end{figure}

\begin{lemma}\label{lem:tobias}
For $r\in(0,1)$, $x\in B_{r+r^2}\sm B_r$ and $d=|x|-r$
\[
\int_{B_r} \frac{dy}{|y-x|^N} \geq c|\ln d|
\]
for some constant $c>0$ depending only on $N$.
\end{lemma}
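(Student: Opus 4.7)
The plan is to pass to polar coordinates around $x$, reducing the integral to one of the logarithm of the chord-length ratios. Parameterising $y=x+\rho\omega$ with $\rho>0$ and $\omega\in\p B_1$, each ray from $x$ either misses $B_r$ or meets it in an interval $(\rho_0(\omega),\rho_1(\omega))$, and the change of variables gives
\[
\int_{B_r}\frac{dy}{|y-x|^N}=\int_{\Sigma}\ln\frac{\rho_1(\omega)}{\rho_0(\omega)}\,d\sigma(\omega),
\]
where $\Sigma\ss\p B_1$ collects the directions whose ray meets $B_r$. The identity exploits precisely that the integrand has homogeneity $-N$, which is what makes the angular integrand a bounded logarithm; the rest of the argument is an explicit computation of $\rho_0$ and $\rho_1$ plus one elementary estimate of the resulting logarithm.

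Next, I would compute $\rho_{0,1}$ by solving $|x+\rho\omega|^2=r^2$. Writing $\theta\in[0,\pi]$ for the angle between $\omega$ and $-x/|x|$, this yields
\[
\rho_{0,1}=|x|\cos\theta\mp\sqrt{r^2-|x|^2\sin^2\theta},
\]
so the ray meets $B_r$ exactly when $\sin\theta<r/|x|$. Since $d<r^2<r$ forces $|x|=r+d<2r$ and $r/|x|>1/2$, I would restrict to the sub-cone $\Sigma_0:=\{\omega\in\p B_1:\sin\theta\leq 1/4\}$, whose surface measure is bounded below by a positive dimensional constant. On $\Sigma_0$ the square root is at least $r\sqrt{3}/2$, so $\rho_1\geq r\sqrt{3}/2$, and combined with the identity $\rho_0\rho_1=|x|^2-r^2=d(|x|+r)\leq 3rd$ this gives $\rho_0\leq 2\sqrt{3}\,d$, hence $\ln(\rho_1/\rho_0)\geq\ln(r/(4d))$ uniformly on $\Sigma_0$.

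To convert this into $c|\ln d|$, I would use $d<r^2$, equivalently $r>\sqrt d$, so $\ln r\geq-\tfrac12|\ln d|$ and
\[
\ln\frac{r}{4d}=\ln r-\ln 4+|\ln d|\geq \tfrac12|\ln d|-\ln 4.
\]
When $|\ln d|$ exceeds a dimensional threshold (say $|\ln d|\geq 4\ln 4$) this is at least $\tfrac14|\ln d|$; in the remaining regime $d$ is bounded away from zero, so $r\geq\sqrt d$ is also bounded below, the integral itself admits a positive dimensional lower bound by comparing $|y-x|$ to a constant, and $|\ln d|$ is bounded above, so the claim follows after possibly reducing $c$. The only modest obstacle is the choice of sub-cone so that $\rho_1\geq cr$ and $\rho_0\leq Cd$ hold with constants independent of $r$ and $d$; this is what the restriction $\sin\theta\leq 1/4$ arranges, using crucially that $|x|<2r$ (provided by $d<r^2$ and $r<1$).
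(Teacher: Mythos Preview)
Your argument is correct and follows the same core idea as the paper's proof: pass to polar coordinates centered at $x$, restrict to a cone of directions pointing roughly toward the origin, and observe that the radial integral of $\rho^{-1}$ over the chord yields a logarithm of order $|\ln d|$.

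The packaging differs slightly. The paper fixes a concrete sector $\mathcal S = \{y\in B_{\sqrt d}(x)\setminus B_{2d}(x) : \cos(\angle Oxy)\geq 7/8\}$, checks geometrically that $\mathcal S\subset B_r$, and integrates $\int_{2d}^{\sqrt d}\rho^{-1}\,d\rho$. You instead compute the exact entry and exit radii $\rho_0,\rho_1$ along each ray by solving the quadratic, and then use the Vieta identity $\rho_0\rho_1=|x|^2-r^2=d(|x|+r)$ together with a lower bound on $\rho_1$ to control $\rho_0$. Both yield the same $\ln(r/Cd)\gtrsim|\ln d|$ estimate on the cone; your route is a touch more analytic, the paper's a touch more geometric. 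You are also more explicit about the complementary regime where $d$ is bounded away from zero (so that $r\geq\sqrt d$ is bounded below and a crude volume bound suffices), which the paper's last inequality $c|\ln(2\sqrt d)|\geq c|\ln d|$ treats rather implicitly.
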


\begin{proof}
Under the given restrictions it holds that the following sector is contained in $B_r$
\[
\mathcal S := \{y \in B_{\sqrt d}(x) \sm B_{2d}(x) \ | \ \cos(\angle Oxy) \geq 7/8\}.
\]
This implies the desired bound on the integral once we pass to polar coordinates centered at $x$
\[
\int_{B_r} \frac{dy}{|y-x|^N} \geq \int_{\mathcal S} \frac{dy}{|y-x|^N} = c\int_{2d}^{\sqrt d} \frac{d\r}{\r} = c|\ln(2\sqrt d)| \geq c|\ln d|.
\]

Here is the geometric justification. Consider the cone $\cC$ with vertex at $x$ and generated by $B_r \cap \p B_{r+d}(x)$. For the angle $\theta$ of this cone we estimate the cosine using that $d < r^2<1$
\[
\cos\theta = 1 - \frac{1}{2}\1\frac{r}{r+d}\2^2 \leq 1 - \frac{1}{2}\1\frac{1}{1+r}\2^2 \leq \frac{7}{8}.
\]

Let $y \in \p B_{r+d}(x)\cap \p B_r$ and $z$ the intersection of the segment between $x$ and $y$ and the sphere $\p B_r$.  By a trigonometric argument we get that $z \in \p B_\r(x)\cap \p B_r$ for $\r:=d(2r+d)/(r+d) \leq 2d$. This finally means that $\mathcal S \ss \cC \cap (B_{r+d}(x)\sm B_{\r}(x)) \ss B_r$.
\end{proof}

\end{document}